\newcommand{\parbdry}{\ensuremath{\dee_{\mathrm{par}}}}
\newcommand{\conbdry}{\ensuremath{\dee_{\mathrm{con}}}}
\newcommand{\maxdomain}{\ensuremath{{\Omega_{\mathrm{max}}}}}
\newcommand{\projflags}{\ensuremath{\mathcal{F}}}
\newcommand{\symflags}{\ensuremath{\mathcal{F}}}
\newcommand{\front}{\ensuremath{\mathrm{Fr}}}
\newcommand{\bgamh}{\ensuremath{\partial(\Gamma, \mathcal{H})}}
\newcommand{\verts}[1]{\ensuremath{\mathcal{V}_{#1}}}
\newcommand{\rface}[1]{\ensuremath{\tilde{#1}}}
\newcommand{\Homvf}{\ensuremath{\mathrm{Hom}}_{\mathrm{VF}}}
\newcommand{\hyp}[1]{[\ker #1]}
\newcommand{\minus}{-}
\declaretheorem[numberwithin=section]{theorem}
\declaretheorem[sibling=theorem, name=Lemma]{lem}
\declaretheorem[sibling=theorem, name=Proposition]{prop}
\declaretheorem[sibling=theorem, name=Corollary]{cor}
\declaretheorem[sibling=theorem, style=definition]{fact}
\declaretheorem[sibling=theorem, style=definition,
name=Definition]{definition}
\declaretheorem[sibling=theorem, style=definition]{remark}
\declaretheorem[sibling=theorem, style=definition]{notation}
\newcommand{\eps}{\varepsilon}
\newcommand{\dee}{\ensuremath{\partial}}
\newcommand{\mf}[1]{\ensuremath{\mathfrak{#1}}}
\newcommand{\mc}[1]{\ensuremath{\mathcal{#1}}}
\newcommand{\mr}[1]{\ensuremath{\mathrm{#1}}}
\newcommand{\ten}{\otimes}
\newcommand{\N}{\ensuremath{\mathbb{N}}}
\newcommand{\Z}{\ensuremath{\mathbb{Z}}}
\newcommand{\R}{\ensuremath{\mathbb{R}}}
\newcommand{\C}{\ensuremath{\mathbb{C}}}
\newcommand{\A}{\ensuremath{\mathbb{A}}}
\renewcommand{\H}{\ensuremath{\mathbb{H}}}
\renewcommand{\P}{\ensuremath{\mathbb{P}}}
\newcommand{\opp}{\ensuremath{\mathrm{opp}}}
\DeclareMathOperator{\Aut}{Aut}
\DeclareMathOperator{\codim}{codim}
\DeclareMathOperator{\Hom}{Hom}
\DeclareMathOperator{\Opp}{Opp}
\DeclareMathOperator{\Sym}{Sym}
\DeclareMathOperator{\Gr}{Gr}
\DeclareMathOperator{\supp}{supp}
\DeclareMathOperator{\spn}{span}
\DeclareMathOperator{\Stab}{Stab}
\DeclareMathOperator{\PSL}{PSL}
\DeclareMathOperator{\SL}{SL}
\DeclareMathOperator{\GL}{GL}
\DeclareMathOperator{\PGL}{PGL}
\title{Examples of extended geometrically finite representations}
\author{Theodore Weisman}
\address{Department of Mathematics, University of Michigan, Ann Arbor
  MI 48109, USA}
\email{tjwei@umich.edu}
\date{\today}
\begin{document}

\begin{abstract}
  This is the second of a pair of papers on extended geometrically
  finite (EGF) representations, which were originally posted as a
  single article under the title ``An extended definition of Anosov
  representation for relatively hyperbolic groups.'' In this paper, we
  prove that the holonomy representation of a projectively convex
  cocompact manifold with relatively hyperbolic fundamental group is
  always an EGF representation. We also prove that EGF representations
  arise as holonomy representations of convex projective manifolds
  with generalized cusps and as compositions of projectively convex
  cocompact representations with symmetric representations of
  $\mathrm{SL}(d, \mathbb{R})$. We additionally show that any small
  deformation of a representation of the latter form is still EGF.
\end{abstract}

\maketitle

\tableofcontents

\section{Introduction}

\subsection{Format of the paper}

This paper constitutes the second part of a preprint originally posted
in April 2022 under the title \emph{An extended definition of Anosov
  representation for relatively hyperbolic groups}
\cite{Weisman2022}. The first part of the original preprint (which
introduces EGF representations and proves a relative stability result)
can still be found with its original title at the original arXiv
posting.

The division was made largely for the sake of decreasing the total
length of the paper, so the contents of this article are essentially
unchanged from the way they appeared in earlier versions of
\cite{Weisman2022}.

\subsection{Overview}

In the last two decades, \emph{Anosov representations} have emerged as
important objects in the study of discrete subgroups of semisimple Lie
groups. Originally defined for surface groups by Labourie in
\cite{labourie2006anosov}, and extended to arbitrary word-hyperbolic
groups by Guichard-Wienhard \cite{gw2012anosov}, Anosov
representations generalize geometric and dynamical properties of
convex cocompact representations in rank one: an Anosov representation
is always a quasi-isometric embedding $\Gamma \to G$ for a Gromov
hyperbolic group $\Gamma$ and a semisimple Lie group $G$, and comes
equipped with an equivariant boundary embedding $\dee \Gamma \to G/P$,
where $P \subset G$ is a parabolic subgroup.

Anosov representations have come to be accepted as a suitable
higher-rank generalization of convex cocompactness, which raises the
question of whether there is also an analogous generalization of
geometrical finiteness. Several authors (see
\cite{kl2018relativizing}, \cite{zhu2019relatively},
\cite{czz2021cusped}, \cite{zz1}) have previously defined notions of
\emph{relative Anosov} representations, but none of the proposed
definitions capture certain natural families of examples of
``geometrically finite'' behavior in higher rank.

In \cite{Weisman2022}, we introduced a new class of representations of
relatively hyperbolic groups into semisimple Lie groups, called
\emph{extended geometrically finite (EGF)} representations. EGF
representations generalize all existing definitions of relative Anosov
representations. In addition, the definition is flexible enough to
cover many additional examples of higher-rank ``geometrically finite''
behavior, and to allow for EGF representations to deform in ways not
available to relative Anosov representations. Specifically, EGF
representations satisfy the following theorem:
\begin{theorem}[{See \cite[Theorem 1.4]{Weisman2022}}]
  \label{thm:stability_theorem}
  Let $(\Gamma, \mc{H})$ be a relatively hyperbolic pair, and let
  $\rho:\Gamma \to \PGL(d, \R)$ be an extended geometrically finite
  representation, with boundary extension $\phi$. If
  $\mc{W} \subset \Hom(\Gamma, \PGL(d,\R))$ is a subspace which is
  \emph{peripherally stable} with respect to $\rho, \phi$, then an
  open subset of $\mc{W}$ containing $\rho$ consists of EGF
  representations.
\end{theorem}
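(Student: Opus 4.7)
The plan is to construct, for every $\rho' \in \mc{W}$ sufficiently close to $\rho$, a new boundary extension $\phi'$ witnessing that $\rho'$ is EGF. The Bowditch boundary $\bgamh$ decomposes into conical limit points and bounded parabolic fixed points, and I would build $\phi'$ by treating these two cases separately before verifying global compatibility.

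For conical limit points, I would mimic the stability proof for Anosov representations. The EGF hypothesis provides, at each conical point $x$, a sequence $\gamma_n \to x$ in $\Gamma$ such that $\rho(\gamma_n)$ contracts uniformly toward $\phi(x)$ off a neighborhood of an appropriate repelling locus. Since this contraction condition is open in the representation variety, any small deformation $\rho'$ enjoys the analogous contraction, which unambiguously defines $\phi'(x)$; equivariance and continuity across conical points should follow from the usual distortion estimates for words in a relatively hyperbolic Cayley graph. For parabolic fixed points, the peripheral stability hypothesis on $\mc{W}$ does essentially all the work: by definition, it provides, for every nearby $\rho'$ and every peripheral subgroup $H \in \mc{H}$, subsets of $\PGL(d,\R)/P$ that deform continuously from those assigned by $\phi$ to the parabolic point stabilized by $H$. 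One then declares $\phi'$ at each parabolic fixed point to be the subset produced by peripheral stability, which makes $\rho'$-equivariance at parabolic points automatic.

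The main obstacle, and the technical heart of the proof, will be showing that the globally defined $\phi'$ satisfies the full EGF convergence property: for any sequence $\gamma_n \to x$ in the relative Cayley graph, and any compact subset $K$ disjoint from the repelling locus assigned to $x$, one must have $\rho'(\gamma_n) K \to \phi'(x)$. The difficulty is uniformity in the presence of horospherical excursions: a sequence approaching a parabolic fixed point may spend arbitrarily long stretches near a horoball before escaping to $x$, so one must interpolate between the peripheral-stability data (controlling the deep-horoball dynamics of $\rho'$) and the conical contraction data (controlling the behavior along the thick part of $\Gamma$). I would attack this by decomposing each such $\gamma_n$ into a peripheral piece and a thick piece, applying each type of control in turn, and using compactness of $\bgamh$ together with the finiteness of $\Gamma$-conjugacy classes in $\mc{H}$ to upgrade pointwise control into uniform control on some neighborhood of $\rho$ in $\mc{W}$.
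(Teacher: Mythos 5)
This theorem is not proved in the present paper at all: it is quoted from the companion article \cite{Weisman2022} (Theorem 1.4 there), and here it is used only as a black box. So there is no in-paper argument for your sketch to be matched against, and to critique the approach one has to look at the companion paper.

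With that caveat, your sketch does have the right broad shape---treat conical and bounded parabolic points separately, let peripheral stability do the work at cusps---but the ``technical heart'' you flag is exactly where the sketch stops being a proof, and the missing ingredient is substantive. The assertion that ``this contraction condition is open in the representation variety'' only shows openness one conical direction at a time; there are uncountably many points of $\bgamh$ and infinitely many excursion patterns in and out of horoballs, so you would be intersecting infinitely many open conditions with no compactness in sight. The companion paper resolves this by constructing a genuinely finite combinatorial object---an automaton whose states carry open subsets of $\PGL(d,\R)/P$ and $\PGL(d,\R)/P^-$ tracking relative quasi-geodesics in $(\Gamma, \mc{H})$---so that the EGF contraction property for the whole boundary reduces to finitely many nesting and contraction inequalities among these open sets, each of which is an open condition on $\rho$. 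Peripheral stability is precisely the hypothesis that the finitely many inequalities attached to cusp transitions persist under the deformation; without some such finite reduction your plan to ``decompose into a peripheral piece and a thick piece'' and ``upgrade pointwise control into uniform control'' has nothing to induct on and no compactness to invoke. (Compactness of $\bgamh$ and finiteness of $\Gamma$-conjugacy classes in $\mc{H}$, which you mention, do not suffice on their own: the repelling loci in the EGF definition vary discontinuously across $\bgamh$, so one cannot simply take a finite subcover.) You should also be a bit careful with the phrase ``unambiguously defines $\phi'(x)$'': the boundary extension is a set-valued map on $\bgamh$ satisfying an antipodality and an extended-convergence condition, so even at conical points establishing that the limit you construct is independent of the approximating sequence and assembles into a legitimate boundary extension is a step, not a given.
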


We will review the terminology in \Cref{thm:stability_theorem} in
\Cref{sec:egf_review}. The main aim of this article is to explain how
various examples fit into the theory of EGF representations, and
highlight applications of \Cref{thm:stability_theorem} for these
examples. In some cases, the theorem recovers known stability results
for various classes of discrete groups, and in other cases it yields
new results.

\subsection{Results}

We refer to \Cref{sec:egf_review} for the definition of an EGF
representation. See \cite{Weisman2022} for further detail.

\subsubsection{Convex cocompact groups in $\PGL(d, \R)$}
\label{sec:convex_cocompact_intro}

In \cite{dgk2017convex}, Danciger-Gu\'eritaud-Kassel introduced a
notion of \emph{convex cocompactness} for projective orbifolds,
i.e. orbifolds with a real projective structure. Roughly, a group
$\Gamma \subset \PGL(d, \R)$ is \emph{convex cocompact} if it acts
with compact quotient on a certain closed invariant convex subset
inside a \emph{properly convex domain} $\Omega \subset \P(\R^d)$. We
refer to \Cref{sec:convex_cocompact} for the precise definition.

Convex cocompact groups in $\PGL(d, \R)$ are closely related to Anosov
subgroups. Let $\{e_1, \ldots, e_d\}$ denote the standard basis for
$\R^{d}$, and let $P_{1,d-1}$ denote the parabolic subgroup
stabilizing the flag $(\spn\{e_1\}, \spn\{e_1, \ldots, e_{d-1}\})$. If
$\Gamma$ is a word-hyperbolic subgroup of $\PGL(d, \R)$ which is
convex cocompact in the sense of \cite{dgk2017convex}, then the
inclusion $\Gamma \hookrightarrow \PGL(d, \R)$ is a $P_{1,d-1}$-Anosov
representation. Conversely, any $P_{1,d-1}$-Anosov representation
$\rho:\Gamma \to \PGL(d, \R)$ which preserves some properly convex
domain in $\P(\R^d)$ is convex cocompact. Further, for any semisimple
Lie group $G$ and any parabolic subgroup $P \subset G$, one can find a
representation $\phi:G \to \PGL(d, \R)$ so that a representation
$\rho:\Gamma \to G$ of a hyperbolic group $\Gamma$ is $P$-Anosov if
and only if the composition $\phi \circ \rho$ has convex cocompact
image (see \cite{ggkw2017anosov}, \cite{zimmer2021projective}). Thus,
convex cocompactness in $\PGL(d, \R)$ can be used to give a definition
of Anosov representation in terms of convex projective geometry.

Unlike several other definitions of Anosov representations, however,
projective convex cocompactness immediately generalizes beyond the
realm of hyperbolic groups. Indeed, there are a number of different
constructions for non-hyperbolic convex cocompact subgroups of
$\PGL(d, \R)$; see e.g. \cite{benoist2006convexes},
\cite{bdl2015convex}, \cite{clm2020convex}, \cite{dgklm2021convex},
\cite{BV} (see also \cite[Sec. 2.6]{weisman2020dynamical} for an
overview, or the forthcoming work \cite{DGKexamples}). In each of the
examples we have mentioned here, the convex cocompact group $\Gamma$
is (abstractly) a relatively hyperbolic group.

These relatively hyperbolic convex cocompact groups do \emph{not} fit
into previously existing theories of relative Anosov representations
(see \cite[Remark 1.14]{weisman2020dynamical}). However, we show in
this paper that they do always give rise to EGF representations:
\begin{theorem}
  \label{thm:convex_cocompact_egf}
  Let $\Gamma$ be a convex cocompact subgroup of $\PGL(d, \R)$, and
  suppose that $\Gamma$ is relatively hyperbolic. Then the inclusion
  $\Gamma \hookrightarrow \PGL(d, \R)$ is extended geometrically
  finite.
\end{theorem}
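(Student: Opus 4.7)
The plan is to read off a boundary extension $\phi$ directly from the convex projective structure preserved by $\Gamma$. Fix a properly convex domain $\Omega \subset \P(\R^d)$ on which $\Gamma$ acts convex cocompactly, with orbital limit set $\Lambda_\Omega \subset \dee\Omega$, and set $G = \PGL(d,\R)$ and $P = P_{1,d-1}$. Over each conical limit point in the Bowditch boundary $\bgamh$, I would assign a single flag in $\pflags$ coming from the (generically unique) supporting hyperplane at the corresponding point of $\Lambda_\Omega$. Over each bounded parabolic point $p_H$ (with $H \in \mc{H}$), I would attach a compact fiber consisting of all flags $(\ell, V) \in \pflags$ for which $\ell$ lies on the face of $\dee\Omega$ associated to $p_H$ and $V$ is a supporting hyperplane of $\Omega$ there; equivalently, this fiber should be the set of accumulation points of $H$-orbits in $\pflags$. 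The extended boundary $\Lambda$ is the resulting fibered space over $\bgamh$, and the projection $\phi:\Lambda \to \pflags$ is tautological and automatically $\Gamma$-equivariant.

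Next I would verify the EGF dynamical axioms by dichotomizing sequences $\gamma_n \to \infty$ in $\Gamma$ into \emph{conical} sequences (those eventually leaving every neighborhood of every peripheral coset in the relatively hyperbolic Cayley graph) and \emph{cuspidal} sequences (those remaining within bounded distance of a single peripheral coset). For conical sequences, the Anosov-like behavior of $\Gamma$ on the non-cuspidal part of the convex core, combined with the proximal characterization of projective convex cocompactness due to Danciger--Gu\'eritaud--Kassel and Zimmer, should yield north-south dynamics on $\pflags$ with attracting and repelling flags at the corresponding conical limit points. For cuspidal sequences, after conjugating we may assume the sequence lies in a fixed peripheral $H$, reducing the question to the dynamics of $H$ on $\pflags$ near its fiber $\phi^{-1}(p_H)$.

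The main obstacle will be controlling this peripheral dynamics precisely. Concretely, I need to show that for each $H \in \mc{H}$, each compact $K \subset \pflags$ disjoint from $\phi^{-1}(p_H)$, and each sequence $h_n \to \infty$ in $H$, the translates $h_n K$ accumulate only inside $\phi^{-1}(p_H)$, with the dual statement for the opposite flag manifold. This requires a structural understanding of peripheral subgroups of a projectively convex cocompact group: each $H$ stabilizes a unique face of $\Lambda_\Omega$ containing $p_H$, preserves a compatible pair of supporting hyperplanes, and acts on $\Omega$ with controlled cusp-like dynamics. I expect to extract these facts from the cocompact action on the convex core, which constrains how peripheral subgroups can degenerate and which pairs of flags they can fix or stabilize; a key technical point will be ruling out accumulation of $h_n K$ on flags transverse to $\phi^{-1}(p_H)$, using proper discontinuity of the $H$-action on $\Omega$ together with convexity arguments near $p_H$.

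Once the peripheral dynamics are in hand, assembling the proof is largely bookkeeping: $\Lambda$ is compact and metrizable because $\bgamh$ and each peripheral fiber are, the map $\phi$ is upper semi-continuous by construction, and the remaining axioms in the EGF definition reduce to compatibility statements between conical convergence in $\bgamh$ and flag convergence in $\pflags$, which should follow directly from the geometric description of $\phi$ and the two cases above.
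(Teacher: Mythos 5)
Your high-level plan — read off the boundary extension from $\Omega$, assigning a single flag at conical limit points and a compact fiber of flags over each bounded parabolic point, and then verify the EGF axioms by splitting $\gamma_n \to \infty$ into conical versus cuspidal sequences — is indeed the same route the paper follows, relying on the identification of $\bgamh$ with a quotient of $\dee\Omega$ by the face relation coming from \cite{weisman2020dynamical} and \cite{iz2022structure}. But as written the proposal has real gaps rather than just unfilled-in details.

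First, a small but telling notational slip: in the EGF framework the boundary extension is a $\Gamma$-equivariant, \emph{surjective, antipodal} map $\phi \colon \Lambda \to \bgamh$ where $\Lambda$ is a closed $\Gamma$-invariant subset of the flag manifold — not a map from a fibered space \emph{to} $\pflags$. That antipodality condition (distinct fibers must be transverse) is exactly where your proposed fiber definition is under-specified. You offer two descriptions of $\phi^{-1}(p_H)$ — ``all flags $(\ell,V)$ with $\ell$ on the face and $V$ a supporting hyperplane there'' and ``accumulation points of $H$-orbits'' — and assert they are equivalent; proving that equivalence, and pinning down whether $V$ is required to contain the support of the whole face, is not bookkeeping. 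If the fiber is taken too large, transversality between fibers over distinct points of $\bgamh$ will fail; if it is taken too small, the expansion axiom will fail. Similarly, ``generically unique supporting hyperplane'' at conical points is not free: one has to prove that $\dee\Omega$ is $C^1$ (and strictly convex) at conical limit points of the convex cocompact action, which in the non-hyperbolic setting is a theorem, not an observation.

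Second, and more seriously, you identify the peripheral dynamics as ``the main obstacle'' and then offer only that you ``expect to extract'' the needed attraction $h_n K \to \phi^{-1}(p_H)$ from proper discontinuity and convexity near $p_H$. Proper discontinuity of $H$ on $\Omega$ is far too weak: the EGF peripheral axiom is an open-set/compact-set attraction statement in the flag manifold, simultaneously for flags and dual flags, and it is precisely here that the paper's technical machinery (of the type in \Cref{prop:attracting_spaces_in_faces}, relating attracting subspaces of divergent sequences in $\Aut(\Omega)$ to supports of faces, together with the structure theory of peripheral subgroups of convex cocompact groups) earns its keep. Without an argument that (i) the attracting subspace of any divergent $h_n\in H$ is contained in $\supp(F_{p_H})$, (ii) the repelling subspace is contained in a fixed supporting hyperplane dual to $F_{p_H}$, and (iii) the convergence is locally uniform away from that hyperplane, you have not established the axiom. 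Your outline names the right target but does not supply the mechanism that makes it true.

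Finally, you should state explicitly that the identification of $\bgamh$ with the quotient $\Lambda_\Omega$-by-faces, and the fact that conical/parabolic points in $\bgamh$ correspond to $C^1$ points and closed faces respectively, is an imported theorem. As it stands the proposal reads as if this correspondence were automatic; making the dependence on \cite{weisman2020dynamical} and \cite{iz2022structure} explicit is necessary both for correctness and for honesty about what is new in the argument.
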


Our proof of \Cref{thm:convex_cocompact_egf} builds on our earlier
study \cite{weisman2020dynamical} of ``boundary maps'' from the
Bowditch boundary of a relatively hyperbolic convex cocompact group
$\Gamma$ to a quotient of the boundary of a $\Gamma$-invariant domain
in $\P(\R^d)$, as well as related work of Islam-Zimmer on the same
topic \cite{iz2022structure}.

\begin{remark}
  Not every convex cocompact group in $\PGL(d, \R)$ is abstractly
  relatively hyperbolic. If $\Gamma$ is a uniform lattice in
  $\PSL(n, \R)$, then $\Gamma$ acts on the space $\tilde{X}$ of
  $n \times n$ positive-definite symmetric matrices, which embeds into
  the vector space $V$ of symmetric $n \times n$ matrices. The image
  of $\tilde{X}$ in the projective space $\P(V)$ is a properly convex
  open set $X$, and the induced action of $\Gamma$ on $X$ is properly
  discontinuous and cocompact, meaning that $\Gamma$ is convex
  cocompact in $\PGL(V)$. However, the group $\Gamma$ is not
  relatively hyperbolic whenever $n > 2$.

  On the other hand, there are no examples known of convex cocompact
  groups in $\PGL(d, \R)$ which are \emph{not} either relatively
  hyperbolic or isomorphic to a uniform lattice in some semisimple Lie
  group $G$.
\end{remark}

\subsubsection{Convex projective orbifolds with generalized cusps}

In \cite{clt2018deforming}, Cooper-Long-Tillmann studied a different
generalization of geometrical finiteness in the context of convex
projective geometry. They considered the case of a strictly convex
projective $(d-1)$-manifold (possibly with boundary) which decomposes
into a compact piece and finitely many \emph{generalized cusps}. In
the language of \cite{clt2018deforming}, a \emph{generalized cusp} is
a strictly convex projective $(d-1)$-manifold homemorphic to
$N \times [0, \infty)$, where $N$ is a closed $(d-2)$-manifold with
virtually nilpotent fundamental group.

Later, Ballas-Cooper-Leitner \cite{bcl2020generalized} classified
generalized cusps into $d$ different \emph{cusp types}, in particular
showing that a generalized cusp always has virtually abelian
fundamental group. A ``type $0$'' generalized cusp is projectively
equivalent to a hyperbolic cusp (and has virtually unipotent
holonomy), while a ``type $(d-1)$'' generalized cusp has virtually
diagonalizable holonomy. The other cusp types are ``interpolations''
between type $0$ and type $d-1$.

Now fix a strictly convex projective $(d-1)$-manifold $M$, and assume
that $M$ is a union of a compact manifold and finitely many
generalized cusps. The holonomy of $M$ is a relative Anosov
representation if and only if all of its cusps have type $0$ (see
\cite{zhu2019relatively}), and the image of the holonomy is convex
cocompact (in the sense mentioned previously) if and only if all of
its cusps have type $d-1$. On the other hand, the framework of EGF
representations still applies in the presence of all types of cusps:

\begin{theorem}
  \label{thm:generalized_cusps_exist}
  For every $d$ and for every $0 \le t < d-1$, there exists a convex
  projective $(d-1)$-manifold with a type $t$ generalized cusp whose
  holonomy is EGF.
\end{theorem}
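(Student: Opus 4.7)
The plan is to combine the deformation theory of generalized cusps due to Cooper-Long-Tillmann and Ballas-Cooper-Leitner with the EGF stability theorem (\Cref{thm:stability_theorem}), starting from a well-understood endpoint and propagating the EGF property along a deformation that realizes each intermediate cusp type.

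For the endpoint, I would take a strictly convex projective $(d-1)$-manifold $M_0$ with all type-$0$ (virtually unipotent) cusps. Finite-volume strictly convex projective manifolds provide such examples, and their holonomy is relative $P_{1,d-1}$-Anosov and hence EGF with singleton peripheral fibers. Following \cite{bcl2020generalized} and \cite{clt2018deforming}, I would then produce an explicit one-parameter deformation $\rho_s$ of the peripheral cusp holonomies of $M_0$, arranged so that for each target type $t$ with $0 \le t < d-1$ there is a value of $s$ at which each deformed peripheral holonomy has type $t$, and so that $\rho_s(\Gamma)$ preserves a properly convex domain in $\P(\R^d)$ and arises as the holonomy of a convex projective $(d-1)$-manifold.

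For each $\rho_s$ I would construct a candidate boundary extension $\phi_s : \bgamh \to \mathcal{F}$, where $\mathcal{F} = \PGL(d, \R)/P_{1,d-1}$. On conical limit points the map is determined by the convex projective structure on the domain preserved by $\rho_s(\Gamma)$; over each parabolic fixed point $p$ the fiber $\phi_s(p)$ is defined as the orbit closure in $\mathcal{F}$ of a ``standard'' flag adapted to the Jordan form of the peripheral holonomy. The explicit matrix normal forms from \cite{bcl2020generalized} make this assignment fully explicit and show that the fiber has the correct dimension for a type $t$ cusp. To conclude, I would verify peripheral stability of the family $(\rho_s, \phi_s)$ and invoke \Cref{thm:stability_theorem} to propagate the EGF property along the path from $\rho_0$ to each $\rho_s$ realizing a type $t$ cusp.

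The main obstacle is verifying peripheral stability when the cusp type actually changes. Unlike the pure type-$0$ (unipotent) or pure type-$(d-1)$ (diagonalizable, i.e.\ convex cocompact) cases, intermediate cusps have both unipotent and diagonal Jordan blocks, so the peripheral fiber is a nontrivial compact subset of $\mathcal{F}$, and the EGF contraction/expansion dynamics near $p$ must be established by combining the two regimes. In particular, upper semicontinuity of the fiber map $\phi_s$ at the values of $s$ where the cusp type jumps, together with the verification that the contraction condition towards $\phi_s(p)$ and expansion away from its opposite fiber both hold uniformly along the deformation, will require a careful analysis of the Jordan structure of type $t$ cusp holonomies and their convergence dynamics on $\mathcal{F}$.
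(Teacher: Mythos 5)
Your proposal has the right ingredients (start from a strictly convex finite-volume manifold with type-$0$ cusps, deform inside the space of virtual-flag representations, use the stability theorem), but it misreads how the stability theorem gets applied, and this produces a real gap. Peripheral stability is a condition on a subspace $\mc{W}$ relative to a \emph{single} already-EGF pair $(\rho,\phi)$; it gives an open neighborhood of $\rho$ inside $\mc{W}$ consisting of EGF representations. It does \emph{not} let you ``propagate along a path'': to re-apply it at some $\rho_s$ in the middle of your deformation, you would first have to know $\rho_s$ is EGF and supply its boundary extension $\phi_s$ --- but that is exactly what you are trying to establish. Your plan of constructing an explicit $\phi_s$ for each $s$ and verifying the contraction/expansion dynamics near the peripheral fixed points is therefore not a shortcut through the stability theorem; it is a direct, from-scratch verification of the EGF property, and the obstacle you flag (the jump in Jordan type and the discontinuity of the fiber map at intermediate $s$) is precisely where such a direct argument gets stuck.

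The step you are missing is that the known constructions of generalized cusps (Bobb, Ballas, Ballas--Marquis) realize a type-$t$ cusp for every $0 < t < d-1$ via an \emph{arbitrarily small} deformation of the strictly convex base holonomy $\rho_0$ inside $\Homvf(\Gamma, \PGL(d,\R), \mc{H})$. Because the deformation can be taken as small as desired, it already lies in the open EGF neighborhood of $\rho_0$ furnished by a \emph{single} application of the stability theorem, provided one proves that $\Homvf(\Gamma, \PGL(d,\R), \mc{H})$ is peripherally stable at the base pair $(\rho_0,\phi_0)$. That is the content of \Cref{thm:vf_deformations}, and it is where all the real work happens; $\rho_0$ is EGF by Zhu's relative Anosov result for finite-volume strictly convex manifolds together with \cite[Thm.~1.10]{Weisman2022}. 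In short: one application of \Cref{thm:stability_theorem} at $\rho_0$ replaces your path argument entirely, and the difficult semicontinuity issues you anticipate never arise because you never need a boundary extension for the deformed representation as an input.
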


The fact that for every $d$ and every $0 < t < d - 1$ there even
exists a convex projective $(d-1)$-manifold containing a type-$t$ cusp
is a theorem of Bobb \cite{bobb2019convex}; the content of
\Cref{thm:generalized_cusps_exist} is that Bobb's construction
produces examples of EGF representations. Our proof relies on the fact
that both EGF representations and holonomy representations of
manifolds with generalized cusps are \emph{relatively
  stable}. Specifically, for a relatively hyperbolic pair
$(\Gamma, \mc{H})$, we let
\[
  \Homvf(\Gamma, \PGL(d, \R), \mc{H})
\]
denote the space of \emph{virtual flag} representations of $\Gamma$
into $\PGL(d, \R)$: the space of representations
$\rho:\Gamma \to \PGL(d+1, \R)$ such that, for each $H \in \mc{H}$,
there is a finite-index subgroup $H' \subset H$ so that the
restriction $\rho|_{H'}$ is discrete faithful and $\rho(H')$ is
conjugate to a group of upper-triangular matrices.

Cooper-Long-Tillmann show (see \cite[Thm. 0.1]{clt2018deforming}) that
holonomy representations of convex projective manifolds with
generalized cusps form an open subset of
$\Homvf(\Gamma, \PGL(d, \R), \mc{H})$. We prove:
\begin{theorem}
  \label{thm:vf_deformations}
  Let $M = \Omega / \Gamma$ be a finite-volume convex projective
  $(d-1)$-manifold, and suppose that $\Omega$ is strictly convex (so
  that $\Gamma = \pi_1M$ is hyperbolic relative to the collection
  $\mc{H}$ of cusp groups, and the holonomy
  $\rho:\pi_1M \to \PGL(d, \R)$ is $1$-EGF with a boundary extension
  $\phi$).

  Then $\Homvf(\Gamma, \PGL(d, \R), \mc{H})$ is
  peripherally stable at $(\rho, \phi)$. In particular, due to
  \Cref{thm:stability_theorem}, an open subset of
  $\Homvf(\Gamma, \PGL(d, \R), \mc{H})$ consists of EGF
  representations.
\end{theorem}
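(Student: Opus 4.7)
The plan is to verify peripheral stability of $\Homvf(\Gamma, \PGL(d, \R), \mc{H})$ at $(\rho, \phi)$ directly from the definition, and then invoke \Cref{thm:stability_theorem}. The verification reduces to a local analysis at each cusp group $H \in \mc{H}$: the Bowditch boundary of a relatively hyperbolic group decomposes into conical limit points and a countable $\Gamma$-orbit of parabolic fixed points, so once $\phi_{\rho'}$ is defined compatibly on each peripheral orbit it extends by equivariance, and the peripheral stability axioms are local at the parabolic fixed points.

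Since $M$ is strictly convex and finite-volume, $\rho(H)$ is virtually unipotent and fixes a unique line in $\R^d$, so $\phi([H])$ is a single point of $\P(\R^d)$. For $\rho' \in \Homvf$ close to $\rho$, the virtual flag hypothesis provides a finite-index $H' \subset H$ with $\rho'(H')$ conjugate to upper-triangular matrices. I would appeal to the classification of generalized cusps in \cite{bcl2020generalized} combined with the local structure of \cite{clt2018deforming} to identify $\rho'(H)$ (up to finite index) with the holonomy of a generalized cusp of some type $t \le d-1$, and then define $\phi_{\rho'}([H])$ to be the closed projective simplex spanned by the eigenlines of the diagonal part of $\rho'(H')$; this simplex has dimension equal to the cusp type, and for $\rho' = \rho$ it degenerates to $\phi([H])$. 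Extending equivariantly over the peripheral $\Gamma$-orbit yields the candidate boundary extension, and transversality of $\phi_{\rho'}([H_1])$ and $\phi_{\rho'}([H_2])$ for distinct peripheral points follows from the corresponding transversality at $\rho$ together with continuity.

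The main obstacle is showing that $\phi_{\rho'}([H])$ is contained in an arbitrarily small neighborhood of $\phi([H])$ as $\rho' \to \rho$. This is delicate because the eigenvalues of $\rho'(H')$ emerge from a single cluster at $1$ (where they are trapped for the unipotent $\rho(H')$), and generically the individual eigenlines undergo bifurcations and do not vary continuously in isolation. The resolution is that their \emph{convex hull} does vary continuously: the Cooper--Long--Tillmann normal forms show that the cusp simplex depends continuously on $\rho'$ as a subset of $\P(\R^d)$ and collapses to the single point $\phi([H])$ as $\rho' \to \rho$. Combined with the openness (proved in \cite{clt2018deforming}) of generalized cusp holonomies inside $\Homvf$ near $\rho$, this collapse supplies the uniform control needed for peripheral stability, after which \Cref{thm:stability_theorem} yields the EGF conclusion on an open neighborhood.
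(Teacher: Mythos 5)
Your proposal captures the right geometric phenomenon — the degeneration/collapse of the cusp simplex as $\rho' \to \rho$ is indeed the central difficulty in this setting — but there are two structural problems with the way the argument is framed.

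First, you have conflated what peripheral stability asks for with what \Cref{thm:stability_theorem} delivers. Peripheral stability is a hypothesis at the \emph{original} pair $(\rho, \phi)$: it is a quantitative contraction condition on the elements $\rho'(h)$, $h \in H$, measured against the fixed sets $\phi([H])$ and their fixed ``collar'' neighborhoods, uniformly over $\rho'$ in some neighborhood in the subspace and over cofinite subsets of $H$. It does not require (and cannot assume) a boundary extension $\phi_{\rho'}$ for the perturbed representation — constructing $\phi_{\rho'}$ is the \emph{output} of the relative stability theorem, not an input. Your plan of building $\phi_{\rho'}$ first and then reading off peripheral stability inverts the logical order, and the reduction at the end (``this collapse supplies the uniform control'') is precisely the step that would need justification: continuity of the candidate simplex in $\rho'$ for each fixed $\rho'$ does not by itself yield a contraction estimate that is uniform over a neighborhood $\mc{O}$ of $\rho$ and simultaneously over a cofinite $T' \subset H$. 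That uniformity is exactly where the eigenvalue clustering you flag becomes dangerous, and it is not resolved merely by observing that the simplex collapses pointwise.

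Second, the route through the Cooper--Long--Tillmann openness theorem and the Ballas--Cooper--Leitner normal forms is more than is needed and also slightly off. The definition of $\Homvf$ already gives you, by hypothesis, a finite-index $H' \subset H$ with $\rho'(H')$ conjugate into upper-triangular matrices; the peripheral estimates should be extracted directly from that upper-triangular structure, without routing through the classification of generalized cusps or invoking openness of cusp holonomies (which is a statement about the whole manifold rather than about each peripheral subgroup). Relying on CLT openness also misrepresents the logical role of $\Homvf$ in the statement: peripheral stability is what lets you conclude EGF-ness on an open set, not the other way around. Finally, defining $\phi_{\rho'}([H])$ as the span of eigenlines of the ``diagonal part'' is not quite correct for cusps with nontrivial Jordan structure, where the attracting set is determined by the full flag decomposition rather than by eigenlines alone.
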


\Cref{thm:vf_deformations} implies \Cref{thm:generalized_cusps_exist}
because known constructions of convex projective manifolds with
generalized cusps (see \cite{ballas2021}, \cite{bm2020properly},
\cite{bobb2019convex}) proceed by starting with a manifold
$M = \Omega / \Gamma$ as above, and then performing an (arbitrarily
small) deformation of the holonomy of $M$ inside of
$\Homvf(\pi_1M, \PGL(d, \R), \mc{H})$ to produce a new convex
projective structure on $M$ which realizes its ends as generalized
cusps.

\begin{remark}
  In \cite{cm2014finitude}, Crampon-Marquis defined several notions of
  geometrical finiteness for strictly convex projective manifolds, and
  claimed that their definitions were all equivalent. It appears that
  this was an error, and some of their definitions are actually
  stronger than others. None of their definitions allow for the
  presence of generalized cusps which are not ``type $0$'' in the
  Ballas-Cooper-Leitner classification.

  On the other hand, work of Cooper-Long-Tillmann \cite[Thm
  11.6]{clt2015convex} implies that any finite-volume strictly convex
  projective manifold $M = \Omega / \Gamma$ as in
  \Cref{thm:vf_deformations} only has cusps of type $0$, and is
  actually geometrically finite in the \emph{strongest} sense defined
  by Crampon-Marquis. Zhu \cite[Prop 8.7]{zhu2019relatively} proved
  that the holonomy representation of such a geometrically finite
  manifold is always relative Anosov (relative to the cusp groups),
  hence EGF by \cite[Thm. 1.10]{Weisman2022}. This justifies the
  assertion in the first paragraph of \Cref{thm:vf_deformations}.
\end{remark}

\subsubsection{Compositions with symmetric representations}

The last class of examples we consider in this paper also derive from
the convex cocompact representations discussed in
\Cref{sec:convex_cocompact_intro}. Suppose that $\Gamma$ is a
relatively hyperbolic group, relative to a collection $\mc{H}$ of
virtually abelian subgroups. Let $V$ be the vector space $\R^{d}$, and
let $\bar{\rho}:\Gamma \to \PGL(V)$ be a discrete faithful
representation whose image is a convex cocompact group in the sense of
Danciger-Gu\'eritaud-Kassel. Possibly after replacing $\Gamma$ with a
finite-index subgroup, we may lift $\bar{\rho}$ to a discrete faithful
representation $\rho:\Gamma \to \SL(V)$. Abusing terminology slightly,
we will refer to both $\bar{\rho}$ and $\rho$ as \emph{convex
  cocompact} representations.

If $\Gamma$ is a hyperbolic group, and $\rho$ is a $P$-Anosov
representation for some parabolic $P \subset \SL(V)$, then the
composition of $\rho$ with the symmetric representation
\[
  \tau_k:\SL(V) \to \SL(\Sym^k(V))
\]
is a new representation of $\Gamma$ which is $P'$-Anosov, for some
parabolic $P' \subset \SL(\Sym^k(V))$ depending only on $P$. Due to
the close connection between $P_{1,d-1}$-Anosov representations and
projectively convex cocompact representations, one might hope that
even when $\Gamma$ is \emph{not} hyperbolic, but $\rho$ is convex
cocompact, then the composition $\tau_k \circ \rho$ is still convex
cocompact.

It appears that this is not the case: we do \emph{not} expect the
composition $\tau_k \circ \rho$ to be convex cocompact unless $\Gamma$
is a hyperbolic group. However, we can still show:
\begin{theorem}
  \label{thm:symmetric_powers_egf}
  Let $\Gamma$ be hyperbolic relative to virtually abelian subgroups,
  and let $\rho:\Gamma \to \SL(V)$ be projectively convex
  cocompact. For any $k \ge 1$, the representation $\tau_k \circ \rho$
  is EGF, with respect to the parabolic $P' \subset \SL(\Sym^k(V))$
  stabilizing a line in a hyperplane.
\end{theorem}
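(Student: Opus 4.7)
The plan is to bootstrap from the EGF structure already established by \Cref{thm:convex_cocompact_egf}: the representation $\rho$ is itself EGF with target parabolic $P_{1,d-1} \subset \SL(V)$ and some boundary extension $\phi:\bar{\Lambda} \to \SL(V)/P_{1,d-1}$, where $\bar{\Lambda}$ is a compact space containing the Bowditch boundary $\bgamh$. I will transport $\phi$ through $\tau_k$ to produce a boundary extension for $\tau_k \circ \rho$ into the space of lines-in-hyperplanes of $\Sym^kV$.

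The main functorial tool is the $\SL(V)$-equivariant embedding
\[
  \iota_k:\ \SL(V)/P_{1,d-1}\ \hookrightarrow\ \SL(\Sym^kV)/P',
  \qquad (L,H)\mapsto \bigl(L^{\otimes k},\ \ker(\alpha^{\otimes k})\bigr),
\]
where $\alpha\in V^*$ is any nonzero covector with $\ker\alpha = H$; the image line lies in the image hyperplane because $\alpha(v)^k=0$ for $v\in L\subset H$. The map $\iota_k$ is equivariant for $\tau_k$ and preserves transversality: if $L\not\subset H'$ then $\alpha'(v)^k\neq 0$, so $L^{\otimes k} \not\subset \ker((\alpha')^{\otimes k})$. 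Setting $\phi_k := \iota_k \circ \phi$, equivariance and continuity of $\phi_k$ are automatic, and the antipodality of $\phi_k|_{\bgamh}$ follows from that of $\phi$ together with the transversality-preserving property of $\iota_k$.

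The conical part of the EGF contraction axiom transfers just as easily, because the first singular value gap is preserved by $\tau_k$:
\[
  \frac{\sigma_1(\tau_k(g))}{\sigma_2(\tau_k(g))} \;=\; \frac{\sigma_1(g)^k}{\sigma_1(g)^{k-1}\sigma_2(g)}\;=\;\frac{\sigma_1(g)}{\sigma_2(g)}.
\]
Hence any EGF contraction estimate for $\rho$ along sequences $\gamma_n\to\infty$ escaping cusps in the controlled way required by the definition pushes forward through $\iota_k$ to an estimate of the same strength for $\tau_k\circ\rho$.

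The main obstacle is the axiom at parabolic points. For each $H\in\mc{H}$ with fixed point $p\in\bgamh$, I need to exhibit a plume neighborhood of $\phi_k(p)$ in $\SL(\Sym^kV)/P'$ whose complement is contracted into arbitrarily small neighborhoods of $\phi_k(p)$ by $\tau_k\circ\rho(h_n)$ for $h_n\to\infty$ in $H$. Here I will exploit the fact (from the structure theory in \cite{iz2022structure}, also used in the proof of \Cref{thm:convex_cocompact_egf}) that when $\Gamma$ is a relatively hyperbolic convex cocompact subgroup of $\SL(V)$ with virtually abelian peripherals, $\rho(H)$ is, after passing to finite index, a commuting family of semisimple elements with a joint weight decomposition $V=\bigoplus_\chi V_\chi$. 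This lifts canonically to a joint weight decomposition of $\Sym^kV$ indexed by length-$k$ symmetric multisets of the weights of $\rho(H)|_V$, and on each such weight space the contraction rate under $\tau_k\circ\rho(h_n)$ is the corresponding $k$-fold product of the rates on $V$. A direct check on these weight data should show that the $\iota_k$-image of a plume for $(\rho,\phi)$ at $p$ is a plume for $(\tau_k\circ\rho,\phi_k)$ at $\phi_k(p)$. Assembling these local verifications into the full EGF axiom using the functoriality of $\iota_k$ will complete the argument.
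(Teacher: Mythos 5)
Your outline is close to the paper's strategy in spirit --- transport through the equivariant embedding
\[
\iota_k\colon \SL(V)/P_{1,d-1}\hookrightarrow \SL(\Sym^kV)/P',\qquad (L,\ker\alpha)\mapsto\bigl(L^{\otimes k},\ker(\alpha^{\otimes k})\bigr),
\]
preservation of the singular value gap $\sigma_1/\sigma_2$, and a weight-space analysis at the virtually abelian peripherals are all the right ideas, and the conical part of the axiom really does transfer as cleanly as you say. But the way you propose to handle the parabolic points has two genuine gaps, and they are not cosmetic.

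First, a ``cone'' (plume) in the EGF definition is an \emph{open} subset of the flag variety. The image of $\iota_k$ is a proper closed subvariety (a Veronese times a dual Veronese) of $\SL(\Sym^kV)/P'$, so for $k\geq 2$ the $\iota_k$-image of any set has empty interior: it simply cannot be a plume. You would have to construct the plumes for $\tau_k\circ\rho$ as genuine open subsets of $\SL(\Sym^kV)/P'$, which cannot be done by functoriality alone --- this is where the real work lies. Second, and more seriously, choosing $\phi_k=\iota_k\circ\phi$ gives the \emph{wrong} boundary extension at a parabolic point $p$. In the convex cocompact case the fibre $\phi^{-1}(p)$ sits inside a face of $\Omega$ with span $W_p\subset V$, and for a divergent sequence $h_n\in H$ with (say) bounded $\lambda_1(h_n)/\lambda_2(h_n)$ but growing $\lambda_2(h_n)/\lambda_3(h_n)$, the accumulation of $\tau_k(\rho(h_n))[w]$ for generic $w\in\Sym^k V$ is a point of $\P(\Sym^kW_p)$ that is typically \emph{not} on the Veronese image $\iota_k(\P(W_p))$. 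Thus the contraction at $p$ concentrates on a set that strictly contains $\iota_k(\phi^{-1}(p))$, and the EGF contraction axiom would fail for your $\phi_k$. One must enlarge the fibres at the parabolic points (roughly, to include the whole relevant simplex in $\P(\Sym^kW_p)$ and its dual side), and then verify both that this enlarged collection of fibres still assembles into an equivariant, antipodal, continuous boundary extension and that the weight-space contraction lands inside it. (As a smaller point: in the EGF framework the boundary extension $\phi$ maps from a compact invariant subset of $G/P$ \emph{to} the Bowditch boundary, not the other way around as you wrote; this does not change the substance but it matters when checking the definitions.)
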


In \cite{dgk2017convex}, Danciger-Gu\'eritaud-Kassel showed that
projectively convex cocompact representations are \emph{absolutely}
stable: if $\rho:\Gamma \to \SL(V)$ is projectively convex cocompact,
then any sufficiently small deformation of $\rho$ in
$\Hom(\Gamma, \SL(V))$ is also projectively convex cocompact (in
particular, it is discrete with finite kernel). The proof in
\cite{dgk2017convex} does \emph{not} apply to the representations in
\Cref{thm:symmetric_powers_egf}. However, we prove the following:
\begin{theorem}
  \label{thm:symmetric_powers_stable}
  Let $\rho, \Gamma$ be as in \Cref{thm:symmetric_powers_egf}, so that
  $\tau_k \circ \rho$ is an EGF representation for every $k \ge
  1$. Then for some boundary extension $\phi$ for $\tau_k \circ \rho$,
  the entire subspace $\Hom(\Gamma, \SL(\Sym^k(V))$ is peripherally
  stable about $\tau_k \circ \rho$, $\phi$.

  In particular, due to \Cref{thm:stability_theorem}, an open subset
  of $\Hom(\Gamma, \SL(\Sym^k(V)))$ containing $\tau_k \circ \rho$
  consists of EGF representations.
\end{theorem}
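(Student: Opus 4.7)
The plan is to verify peripheral stability at each peripheral subgroup $H \in \mc{H}$; once this is done, the conclusion follows from \Cref{thm:stability_theorem}. The key structural input is that every $H \in \mc{H}$ is virtually abelian, so after replacing $H$ by a finite-index subgroup I may assume $H$ is finitely generated abelian. Nearby restrictions $\rho'|_H$ of deformations $\rho' \in \Hom(\Gamma, \SL(\Sym^k(V)))$ are then parameterized by a finite-dimensional space of commuting tuples in $\SL(\Sym^k(V))$ close to the tuple $\tau_k \circ \rho(h_1), \ldots, \tau_k \circ \rho(h_r)$ for a generating set $h_1, \ldots, h_r$ of $H$, and this rigidity is what ultimately powers the argument.

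First I would give an explicit description of the peripheral part $\phi_H$ of a suitable boundary extension $\phi$ at the Bowditch fixed point of $H$. Since $\rho$ is projectively convex cocompact and $H$ is virtually abelian, the convex cocompact theory (compare \Cref{thm:convex_cocompact_egf}) furnishes a concrete joint weight decomposition of $\rho(H)$ on $V$, together with attracting/repelling dynamics realized by the convex cocompact boundary data. Pushing this forward via $\tau_k$, the joint weights of $\tau_k \circ \rho(H)$ on $\Sym^k(V)$ are $k$-fold sums of weights of $\rho(H)$, and I would take $\phi_H$ to be a canonical closed $\tau_k \circ \rho(H)$-invariant subset of the partial flag variety $\mc{F}'$ of lines in hyperplanes, built from these joint weight subspaces and chosen just large enough to contain the closure of all attracting fixed points of elements of $\tau_k \circ \rho(H)$ in $\mc{F}'$ while satisfying the EGF transversality conditions.

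Next, for a small deformation $\rho'$, I would build a deformed boundary extension $\phi'$ by transporting this combinatorial description continuously along the deformation. Here the crucial input is that joint generalized eigenspace decompositions of commuting tuples vary upper-semicontinuously under small perturbations: the joint weight subspaces of $\rho'(H)$ stay Hausdorff-close to those of $\tau_k \circ \rho(H)$, so the analogous closed subset of $\mc{F}'$ built from the weight spaces of $\rho'(H)$ gives a peripheral piece $\phi'_H$ close to $\phi_H$. Gluing these peripheral pieces onto the convex cocompact part of $\phi$ (which itself deforms continuously by \Cref{thm:convex_cocompact_egf}) yields a global candidate $\phi'$.

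The main obstacle will be that $\tau_k$ introduces eigenvalue collisions: even when $\rho(h)$ has distinct eigenvalue moduli on $V$, the matrix $\tau_k \circ \rho(h)$ typically has repeated eigenvalues on $\Sym^k(V)$, and a deformation $\rho'$ outside the image of $\tau_k$ can split these eigenspaces unpredictably, so individual weight lines of $\tau_k \circ \rho(H)$ cannot be tracked. To handle this I would work throughout with joint generalized eigenspaces rather than individual weight lines, and exploit the freedom in the choice of $\phi_H$: any closed set containing the requisite attracting generalized eigenspaces and disjoint from the requisite repelling ones serves as a peripheral boundary extension, and this property is stable under the continuity of the generalized eigenspace decomposition. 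A Hausdorff-continuity argument, combined with verifying the transversality and contraction conditions from the EGF definition by comparing spectral radii of $\rho'(h)$ with those of $\tau_k \circ \rho(h)$ for a finite generating set of $H$, then yields peripheral stability for the entire space $\Hom(\Gamma, \SL(\Sym^k(V)))$, completing the proof.
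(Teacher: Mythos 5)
Your high-level plan is sensible — verify peripheral stability locally at each peripheral subgroup, exploit that peripherals are (virtually) abelian, and describe the peripheral extension via a joint weight decomposition carried through $\tau_k$ — and this is indeed the right hunting ground. But as written there are two gaps, and they are exactly where the real work of the theorem lives.

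First, the claim that ``joint generalized eigenspace decompositions of commuting tuples vary upper-semicontinuously'' is false as stated. What varies continuously is the spectral projection onto a \emph{cluster} of eigenvalues that remains uniformly separated from the rest of the spectrum; within a cluster, individual generalized eigenspaces can jump anywhere. Since $\tau_k$ forces eigenvalue coincidences on $\Sym^k(V)$, you must specify how to group eigenvalues into clusters so that (i) the grouping is determined by data invariant under small perturbation, and (ii) the resulting closed set in $\mc{F}'$ satisfies the transversality and ``fatness'' requirements of an EGF boundary extension. Your proposal to ``work with joint generalized eigenspaces rather than weight lines'' does not resolve this, because for a semisimple $\tau_k \circ \rho(H)$ the joint generalized eigenspaces \emph{are} the weight lines — there is no coarser canonical level unless you build one, and you have not said how. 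This is also why the theorem is stated for \emph{some} boundary extension $\phi$ rather than an arbitrary one: the right $\phi_H$ must be engineered to be robust, and producing it is a substantive construction, not a choice.

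Second, and more centrally, peripheral stability demands control over a \emph{cofinite} subset of $H$, not control element-by-element. A deformation $\rho'$ near $\tau_k \circ \rho$ only guarantees that $\rho'(h)$ is close to $\tau_k\circ\rho(h)$ for $h$ in a fixed finite set (e.g.\ generators of $H$); for a word $h_1^{n_1}\cdots h_r^{n_r}$ with the $n_i$ large the two matrices can be arbitrarily far apart. You gesture at this with ``comparing spectral radii for a finite generating set,'' but spectral radii alone do not control proximality or the $V_p$/$C_p$ contraction dynamics in $\mc{F}'$, and moreover a non-semisimple deformation of $\rho'|_H$ can spoil the simultaneous eigenbasis you would need to extrapolate from generators to the whole group. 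The proof requires a uniform quantitative estimate showing that the eigenvalue gaps of $\tau_k\circ\rho(H)$ relevant to the EGF dynamics persist for \emph{all} large elements of $H$ under any sufficiently small perturbation of the generators' images, and this is where the specific positivity/convex-cocompact structure of $\rho(H)$ on $V$, pushed through $\tau_k$, has to be used in an essential way. Without that estimate — or an argument that supplies it — the proposal does not close.

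Finally, a smaller confusion: you speak of the ``convex cocompact part of $\phi$ deforming continuously by \Cref{thm:convex_cocompact_egf}.'' That theorem produces a boundary extension for $\rho$ acting on $\P(V)$, not for $\tau_k\circ\rho$ acting on the flag variety of $\Sym^k(V)$; the extension for $\tau_k\circ\rho$ is a genuinely new object that must be built via $\tau_k$, and its behavior away from parabolic points needs its own justification rather than an appeal to the $k=1$ case.
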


Thus, taking $k > 1$, \Cref{thm:symmetric_powers_stable} provides new
examples of discrete subgroups of higher-rank Lie groups which are
\emph{absolutely} stable in their representation varieties. (When
$k = 1$, the stability of these representations follows from the
stability theorem of Danciger-Gu\'eritaud-Kassel.)

\subsection{Acknowledgements}

The author would like to thank his PhD advisor, Jeff Danciger, for
encouragement and much-needed advice. Additional thanks are owed to
Daniel Allcock, Dick Canary, Fanny Kassel, Max Riestenberg, Feng Zhu,
and Andy Zimmer for helpful feedback. This work was supported in part
by NSF grants DMS-1937215 and DMS-2202770.

\section{Review of EGF representations}
\label{sec:egf_review}

In this section we briefly review some of the definitions,
terminology, and basic results surrounding extended geometrically
finite representations. Although EGF representations can be defined
with respect to any (symmetric) parabolic subgroup $P$ of a semisimple
Lie group $G$, in this paper we will only consider the theory in the
case where $G = \PGL(d, \R)$ or $\SL(d, \R)$, and $P$ is the
stabilizer of a flag of type $(1, d-1)$ in $\R^d$. We refer to
\cite{Weisman2022} for a more thorough introduction to the theory,
including some background regarding relatively hyperbolic groups and
semisimple Lie groups.

\subsection{Extended convergence group actions}

The definition of an EGF representation is based on a characterization
of Anosov representations in terms of topological
dynamics---specifically, in terms of \emph{convergence
  actions}. Recall that if $\Gamma$ is a group acting by
homeomorphisms on a Hausdorff space $M$, we say that $\Gamma$ is a
\emph{convergence group} and that the action is a \emph{convergence
  group action} if, for every divergent sequence
$\gamma_n \in \Gamma$, after extracting a subsequence, one can find
(not necessarily distinct) points $a, b \in M$ so that the
restrictions $\gamma_n|_{M \minus \{a\}}$ converge to the constant map
$b$, uniformly on compacts. The point $a$ can be thought of as an
``repelling point'' for the divergent sequence $\gamma_n$, and the
point $b$ can be thought of as an ``attracting point;'' the complement
of the ``repelling point'' is a ``basin of attraction'' for the
sequence $\gamma_n$. When $(\Gamma, \mc{H})$ is a relatively
hyperbolic pair, then $\Gamma$ acts as a convergence group on the
Bowditch boundary $\bgamh$.

In rank one, geometrically finite representations can be characterized
using convergence group actions of relatively hyperbolic groups. To
define \emph{extended} geometrically finite actions, we broaden the
definition in two ways simultaneously. Essentially, we no longer
require the ``basin of attraction'' for a divergent sequence to be the
complement of a singleton, and we no longer require the ``attracting
point'' to be a singleton either.

\begin{definition}
  \label{defn:extended_convergence_group}
  Let $(\Gamma, \mc{H})$ be a relatively hyperbolic pair, acting on a
  Hausdorff space $M$ by homeomorphisms. Let $\Lambda \subset M$ be a
  closed $\Gamma$-invariant set. We say that a $\Gamma$-equivariant
  surjective map $\phi:\Lambda \to \bgamh$ \emph{extends the
    convergence group action} of $\Gamma$ on $\bgamh$ if, for each
  $z \in \bgamh$, there exists an open set $C_z \subset M$ satisfying:
  \begin{enumerate}
  \item For every $z \in \bgamh$, we have
    $\Lambda \subset C_z \cup \phi^{-1}(z)$.
  \item For every sequence $\gamma_n \in \Gamma$ such that
    $\gamma_n \to z_+$ and $\gamma_n^{-1} \to z_-$ for
    $z_\pm \in \bgamh$, every compact subset $K \subset C_{z_-}$, and
    every open set $U \subset M$ containing $\phi^{-1}(z_+)$, we have
    $\gamma_n K \subset U$ for all sufficiently large $n$.
  \end{enumerate}
\end{definition}

\subsection{Linear actions on flags in $\R^d$}

An extended geometrically finite representation into $\PGL(d, \R)$ or
$\SL(d, \R)$ is essentially just an extended convergence action of a
relatively hyperbolic group on the space of \emph{flags} in $\R^d$,
with an extra condition taking into account some of the additional
structure on this flag space.

\begin{notation}
  When $V$ is a real vector space, we use $\Gr(k, V)$ to denote the
  Grassmannian of $k$-planes in $V$; recall that $\Gr(1, V)$ is the
  same as the projective space $\P(V)$. When $\dim(V) = d$, recall
  that there is also a canonical equivariant identification of
  $\Gr(d-1, V)$ with the \emph{dual} projective space $\P(V^*)$, since
  the projectivization of a nonzero linear functional in $V^*$ is
  determined by its kernel.

  We let $\projflags(V)$ denote the space of \emph{flags} of type
  $(1, \dim(V) - 1)$ in $V$, i.e. pairs of subspaces $(V_i, V_j)$ such
  that $V_i \subset V_j \subset V$ and $\dim(V_i) = \codim(V_j) =
  1$. We will write $\Gr(k, d)$ and $\projflags(d)$ as shorthand for
  $\Gr(k, \R^d)$ and $\projflags(\R^d)$, respectively.
\end{notation}

\begin{definition}
  We say that a pair of flags $F = (V, W), F' = (V', W')$ in
  $\projflags(d)$ are \emph{transverse} if $V \oplus W' = \R^d$ and
  $W \oplus V' = \R^d$.

  For any $1 \le k < d$, and any subspace $V_k \subset \Gr(k, d)$, we
  let $\Opp(\xi) \subset \Gr(d - k, d)$ denote the space of
  $(d-k)$-planes transverse to $V_k$. Similarly, for a fixed flag
  $\xi \in \projflags(d)$, we let $\Opp(\xi)$ denote the space of
  flags
  \[
    \{\nu \in \projflags : \nu \textrm{ is transverse to }\xi\}.
  \]
  This is always an open dense subset of $\projflags(d)$. For a subset
  $X \subset \projflags$, we also let $\Opp(X)$ denote the set
  \[
    \bigcap_{\xi \in X} \Opp(\xi).
  \]
\end{definition}

\begin{definition}
  Let $g_n$ be a sequence in $\PGL(d, \R)$ or $\SL(d, \R)$, and let
  $1 \le k < d$.
  \begin{enumerate}
  \item The sequence $g_n$ is called \emph{$k$-contracting} if there
    is a nonempty open subset $U \subset \Gr(k, d)$ such that $g_nU$
    converges to a singleton $\{\xi\}$. The point $\xi \in \Gr(k, d)$
    is called the \emph{$k$-limit} of the sequence $g_n$.
  \item The sequence $g_n$ is called \emph{$k$-divergent} if every
    subsequence of $g_n$ has a further subsequence which is
    $k$-contracting.
  \item A subgroup $\Gamma$ in $\PGL(d, \R)$ or $\SL(d, \R)$ is called
    \emph{$k$-divergent} if every sequence of pairwise distinct
    elements in $\Gamma$ is $k$-divergent.
  \end{enumerate}
\end{definition}

Any divergent sequence in $\PGL(d, \R)$ must be $k$-divergent for at
least one $k$ with $1 \le k < d$.  When $g_n$ is a $k$-divergent
sequence in $\PGL(d, \R)$, then the sequence of inverses $g_n^{-1}$ is
$(d-k)$-divergent. More precisely, we have the following fact, which
can be verified using the singular value decomposition of elements in
$\PGL(d, \R)$ (or for a general statement see \cite[Lemma
4.19]{klp2017anosov} and \cite[Appendix A]{Weisman2022}):
\begin{fact}
  \label{fact:contraction_symmetry}
  For a sequence $g_n \in \PGL(d, \R)$ and points
  $\xi_- \in \Gr(d-k, d)$ and $\xi_+ \in \Gr(k, d)$, the following are
  equivalent:
  \begin{enumerate}
  \item $g_n$ is $k$-contracting and $g_n|_{\Opp(\xi_-)} \to \xi_+$
    uniformly on compacts.
  \item $g_n$ is $k$-divergent, $g_n$ has unique $k$-limit point
    $\xi_+$, and $g_n^{-1}$ has unique $(d-k)$-limit point $\xi_-$.
  \end{enumerate}
\end{fact}

When a sequence $g_n$ in $\PGL(d, \R)$ or $\SL(d, \R)$ is both
$1$-contracting and $(d-1)$-contracting (resp. $1$-divergent and
$d$-divergent), we will say that it is \emph{$(1,d-1)$-contracting} or
\emph{$(1,d-1)$-divergent}. Owing to \Cref{fact:contraction_symmetry},
any $1$-divergent subgroup of $\PGL(d, \R)$ or $\SL(d, \R)$ is
automatically also $(d-1)$-divergent, and vice-versa, so we will also
call such subgroups $(1,d-1)$-divergent.

\begin{remark}
  The more usual definition of $(1,d-1)$-divergence (or more
  generally, \emph{$P$-divergence} for a parabolic subgroup $P$ in a
  semisimple Lie group $G$) is stated in terms of the behavior of the
  \emph{Cartan projection} of a sequence $g_n \in G$. We refer again
  to \cite[Lemma 4.19]{klp2017anosov} and \cite[Appendix
  A]{Weisman2022} for the equivalence.
\end{remark}

\subsection{EGF representations}

\begin{definition}
  Let $\Lambda \subset \projflags(d)$ be a subset, and let
  $\phi:\Lambda \to Z$ be a surjective map to some space $Z$. We say
  that $\phi$ is \emph{transverse} if for every pair of distinct
  points $z_1, z_2 \in Z$, every flag in $\phi^{-1}(z_1)$ is
  transverse to every flag in $\phi^{-1}(z_2)$.
\end{definition}

\begin{definition}
  \label{defn:egf}
  A representation $\rho:\Gamma \to \PGL(d, \R)$ (or to $\SL(d, \R))$
  is $1$-\emph{extended geometrically finite} or ($1$-EGF) if there
  exists a compact $\rho$-invariant subset
  $\Lambda \subset \projflags(d)$ and a surjective transverse map
  $\phi:\Lambda \to \bgamh$ extending the convergence action of
  $\Gamma$ on $\bgamh$.

  The map $\phi$ is called a \emph{boundary extension} of the
  representation $\rho$, and the set $\Lambda$ is called a
  \emph{boundary set}. Since we are only concerned with $1$-EGF
  representations in this paper, we will almost always just refer to
  $1$-EGF representations as \emph{EGF representations}.
\end{definition}

\begin{remark}
  In general there may be more than one possible choice for the
  boundary set and boundary extension associated to a given EGF
  representation; there might also be more than one possible choice
  for the open sets $C_z \subset \projflags(d)$ specified in
  \Cref{defn:extended_convergence_group}. It is also always possible
  to choose the boundary extension $\phi$ so that the preimage of any
  \emph{conical limit point} (see \Cref{defn:conical_limit} below) in
  $\bgamh$ is a singleton; see \cite[Proposition 4.8]{Weisman2022}.
\end{remark}

\subsection{An alternative characterization}

In order to use \Cref{defn:egf} to directly verify that a given
representation is EGF, we must consider the dynamical behavior of
(essentially) arbitrary divergent sequences in $\Gamma$, since any
divergent sequence $\gamma_n$ in a relatively hyperbolic group has a
subsequence which satisfies $\gamma_n \to z_+$ and
$\gamma_n^{-1} \to z_-$ for points $z_\pm \in \bgamh$. Often, we will
only want to consider sequences in $\Gamma$ which either limit to the
boundary of $\Gamma$ \emph{along geodesics} (i.e. \emph{conical limit
  sequences}), or sequences which diverge inside of a fixed peripheral
subgroup of $\Gamma$.

In \Cref{prop:conical_peripheral_implies_egf} below, we give an
alternative characterization of EGF representations which allows us to
restrict our attention to sequences of one of these two forms. We
first recall some terminology:

\begin{definition}
  \label{defn:conical_limit}
  Let $(\Gamma, \mc{H})$ be a relatively hyperbolic pair. We say that
  a sequence $\gamma_n \in \Gamma$ \emph{limits conically to} a point
  $z \in \bgamh$ if there exist \emph{distinct} points
  $a,b \in \bgamh$ such that $\gamma_n^{-1}z \to a$ and
  $\gamma_n^{-1}y \to b$ for every $y \ne z$. If a sequence
  $\gamma_n \in \Gamma$ limits conically to $z$, then we say that $z$
  is a \emph{conical limit point}.
\end{definition}

Every point in the Bowditch boundary of a relatively hyperbolic pair
$(\Gamma, \mc{H})$ is either a conical limit point or a
\emph{parabolic point}, i.e. the (unique) fixed point of a peripheral
subgroup in $\mc{H}$. When $p$ is a parabolic point, we let
$\Gamma_p \in \mc{H}$ denote the peripheral subgroup stabilizing $p$.

We can now state our alternative characterization of EGF
representations:

\begin{prop}[{See \cite[Proposition 4.6]{Weisman2022}}]
  \label{prop:conical_peripheral_implies_egf}
  Let $\rho:\Gamma \to \PGL(d, \R)$ be a representation of a
  relatively hyperbolic group, and let $\Lambda \subset \projflags(d)$
  be a closed $\rho(\Gamma)$-invariant set. Suppose that
  $\phi:\Lambda \to \bgamh$ is a continuous surjective
  $\rho$-equivariant transverse map.

  Then $\rho$ is an EGF representation with EGF boundary extension
  $\phi$ if and only if both of the following conditions hold:
  \begin{enumerate}
  \item \label{item:conical_convergence} For any sequence
    $\gamma_n \in \Gamma$ limiting conically to some point in
    $\bgamh$, the sequences $\rho(\gamma_n^{\pm 1})$ are
    $(1,d-1)$-divergent, and every $(1,d-1)$-limit point of
    $\rho(\gamma_n^{\pm 1})$ lies in $\Lambda$.
  \item \label{item:parabolic_convergence} For every parabolic point
    $p \in \bgamh$, there exists an open set $C_p \subset \projflags$,
    with $\Lambda \subset C_p \cup \phi^{-1}(p)$, such that for any
    compact $K \subset C_p$ and any open set $U$ containing
    $\phi^{-1}(p)$, for all but finitely many $\gamma \in \Gamma_p$,
    we have $\rho(\gamma) \cdot K \subset U$.
  \end{enumerate}
\end{prop}

\subsection{Peripheral stability} The central result of
\cite{Weisman2022} is that EGF representations are relatively stable:
any sufficiently small deformation which satisfies a dynamical
condition on peripheral subgroups is still EGF.

For an EGF representation $\rho$ with boundary extension $\phi$, if
$p$ is a parabolic point in $\bgamh$, then the set $\phi^{-1}(p)$ can
be thought of as \emph{both} an ``attracting set'' and a ``repelling
set'' for the action of $\Gamma_p$ on $\projflags(d)$. More precisely,
if $K$ is any compact set in $C_p$, and $U$ is any neighborhood of
$\phi^{-1}(p)$, there is always some finite set $F$ such that
$\rho(\Gamma_p \minus F)K \subset U$.

We want to consider deformations of the representation $\rho$ which
satisfy the property that some set close to $\phi^{-1}(p)$ is still an
``attracting set'' for $\Gamma_p$ with ``basin of attraction''
$C_p$. We also want to ask for the ``strength'' of the attraction to
not decrease too much, which is quantified by the compact set
$K \subset C_p$, the open set $U \supset \phi^{-1}(p)$, and the finite
set $F$ mentioned previously.

\begin{definition}[Peripheral stability]
  Let $(\Gamma, \mc{H})$ be a relatively hyperbolic pair, and let
  $\rho:\Gamma \to \PGL(d, \R)$ be an EGF representation with boundary
  extension $\phi:\Lambda \to \bgamh$. We say that a subspace
  $\mc{W} \subset \Hom(\Gamma, \PGL(d, \R))$ is \emph{peripherally
    stable} (with respect to the data $(\rho, \phi)$) if, for every
  parabolic point $p \in \bgamh$, every open subset
  $U \subset \projflags(d)$ containing $\phi^{-1}(p)$, every compact
  set $K \subset C_p$, and every finite subset $F \subset \Gamma_p$
  such that
  \[
    \rho(\Gamma_p \minus F)K \subset U,
  \]
  there is an open neighborhood $\mc{W}' \subset \mc{W}$ containing
  $\rho$, such that every $\rho' \in \mc{W}$ satisfies
  \[
    \rho'(\Gamma_p \minus F)K \subset U.
  \]
\end{definition}

\Cref{thm:stability_theorem} asserts that small deformations of EGF
representations inside of peripherally stable subspaces remain
EGF. The point of the peripheral stability condition is that it can be
verified by only considering how deformations of some representation
$\rho:\Gamma \to \PGL(d, \R)$ behave when they are restricted to
peripheral subgroups of $\Gamma$. If the peripheral subgroups are not
too complicated (for instance, if they are virtually nilpotent, or
even virtually abelian), then one can hope to get a direct
understanding of how their actions on $\projflags(d)$ deform in
certain subspaces of the representation variety
$\Hom(\Gamma, \PGL(d, \R))$; then \Cref{thm:stability_theorem} makes
it possible to upgrade this understanding to the deformed action of
the \emph{entire} group $\Gamma$ on the same subspace.

\subsection{EGF representations and relative Anosov representations}

As stated in the introduction, EGF representations generalize previous
definitions of relative Anosov representations provided by
Kapovich-Leeb \cite{kl2018relativizing}, Zhu \cite{zhu2019relatively},
and Zhu-Zimmer \cite{zz1}.

By a ``relative Anosov representation,'' in this paper we mean the
following:
\begin{definition}
  \label{defn:relative_anosov}
  Let $\Gamma$ be a subgroup of $\PGL(d, \R)$, and suppose that
  $(\Gamma, \mc{H})$ is a relatively hyperbolic pair. We say that
  $\Gamma$ is \emph{relatively $1$-Anosov} if $\Gamma$ is
  $1$-divergent, and there is a $\Gamma$-equivariant transverse
  embedding $\bgamh \to \projflags(d)$ whose image is the set of
  $(1,d-1)$-limit points of $\Gamma$.
\end{definition}

In the work of Kapovich-Leeb \cite{kl2018relativizing}, groups as in
\Cref{defn:relative_anosov} are called \emph{relatively asymptotically
  embedded} subgroups of $\PGL(d, \R)$; in \cite{zz1}, it is shown
that the definiton essentially agrees with Zhu's notion
\cite{zhu2019relatively} of a \emph{relatively dominated}
representation.

The relation to EGF representations is given by the following:
\begin{theorem}[{See \cite[Theorem 1.10]{Weisman2022}}]
  \label{thm:egf_relative_anosov}
  Let $\Gamma$ be a subgroup of $\PGL(d, \R)$, and suppose that
  $(\Gamma, \mc{H})$ is a relatively hyperbolic pair. Then the
  following are equivalent:
  \begin{enumerate}
  \item The group $\Gamma$ is relatively $1$-Anosov.
  \item The inclusion $\Gamma \hookrightarrow \PGL(d, \R)$ is a
    $1$-EGF representation, and there is an associated boundary
    extension $\phi:\Lambda \to \bgamh$ which is injective.
  \end{enumerate}
  Moreover, in this case the associated boundary set
  $\Lambda \subset \projflags(d)$ is precisely the set of
  $(1, d-1)$-limit points of $\Gamma$.
\end{theorem}


\section{Convex projective geometry: notation and background}
\label{sec:convex_projective_geometry}

In this section we briefly discuss some background material related to
convex projective geometry, as all of the examples of EGF
representations in this paper derive from convex projective structures
on manifolds. We also establish a few routine results that we will
need in later sections.

\begin{notation}
  We fix the rest of the following conventions for the rest of the
  paper.

  \begin{itemize}
  \item If $V$ is a real vector space, then $\P(V)$ denotes the
    projective space over $V$, i.e. the space $\Gr(1, V)$ of lines in
    $V$.

  \item For any $x \in V - \{0\}$, we let $[x]$ denote the image of
    $x$ under the quotient map $V - \{0\} \to \P(V)$.

  \item If $W$ is a subset of $V$, then we let $[W]$ denote the image
    of $W - \{0\}$ in $\P(V)$. If $W \subseteq V$ is a vector
    subspace, we will identify $\P(W)$ with $[W] \subset \P(V)$.

  \item When $B \subset \P(V)$, then the \emph{span} of $B$, denoted
    $\spn(B)$, is the subspace of $V$ spanned by any lift of $B$ in
    $V$. The \emph{projective span} of $B$ is the subset
    $[\spn(B)] \subset \P(V)$.
    
  \item If $w$ is any element of the \emph{dual} projective space
    $\P(V^*)$, we let $\hyp{w}$ denote the subset of $\P(V)$ given by
    the image of $\ker{\tilde{w}}$, where $\tilde{w}$ is any lift of
    $w$ in $V^*$.
    
  \item If $w \in \P(V^*)$ and $v \in \P(V)$ satisfy
    $v \notin \hyp{w}$, then we say $w$ and $v$ are \emph{transverse}
    and write $w \perp v$.
  \end{itemize}
\end{notation}

\subsection{Convex projective structures}

Let $V$ be a real $d$-dimensional vector space. Recall that a subset
$\Omega \subset \P(V)$ is \emph{properly convex} if
$\overline{\Omega}$ is a convex subset of an affine chart in
$\P(V)$. A manifold $M$ (possibly with boundary) has a \emph{convex
  projective structure} if $M$ can be realized as a quotient
$\Omega / \Gamma$ for a discrete subgroup
$\Gamma \subseteq \Aut(\Omega)$, where
\[
  \Aut(\Omega) = \{\gamma \in \PGL(V) : \gamma \cdot \Omega =
  \Omega\}.
\]

There are several different possible meanings for the ``boundary'' of
a properly convex set in $\P(V)$, so below we make things explicit:
\begin{definition}
  \label{defn:convex_set_boundary}
  Let $\Omega \subset \P(V)$ be a properly convex set with nonempty
  interior. In general, $\Omega$ might not be either open or closed.

  \begin{itemize}
  \item The \emph{frontier} of $\Omega$ is
    $\front(\Omega) = \overline{\Omega} - \mathrm{int}(\Omega)$.
  \item The \emph{nonideal boundary} of $\Omega$ is
    $\dee_n\Omega = \front(\Omega) \cap \Omega$.
  \item The \emph{ideal boundary} of $\Omega$ is
    $\dee_i\Omega = \front(\Omega) - \dee_n\Omega$.
  \end{itemize}

  When $\Omega$ is a properly convex \emph{open} set, we will use the
  notation $\dee \Omega$ to mean the \emph{ideal boundary}
  $\dee_i(\Omega)$, which coincides with the frontier $\front(\Omega)$
  in this case. Note that this conflicts with the convention in
  e.g. \cite{clt2018deforming,bcl2020generalized}.
\end{definition}

When $\Omega$ is open, we say it is a \emph{properly convex
  domain}. Then any manifold $M = \Omega / \Gamma$ has empty
boundary. If $M = \Omega / \Gamma$ is a convex projective manifold
with boundary, then its boundary $\dee M$ is identified with
$\dee_n\Omega / \Gamma$. When $\dee_n\Omega$ contains no nontrivial
projective segments, then we say that the manifold $M$ has
\emph{strictly convex boundary}.

\subsubsection{Faces in convex domains}

If $\Omega \subset \P(V)$ is a properly convex domain, a \emph{face}
of $\Omega$ is an equivalence class in $\dee \Omega$ under the
relation which identifies distinct points $x,y \in \dee \Omega$ if
there is an open projective line segment in $\dee \Omega$ containing
both $x$ and $y$.

If $x \in \dee \Omega$, we let $F_\Omega(x)$ denote the unique face of
$\Omega$ containing $x$. The \emph{support} of a face $F$, denoted
$\mathrm{supp}(F)$, is the projective span $[\spn(F)] \subset
\P(V)$. The \emph{boundary} of a face $\dee F$ is the boundary of $F$
when $F$ is viewed as a convex open subset of $\mathrm{supp}(F)$.

\subsubsection{The Hilbert metric}

Whenever $\Omega$ is a properly convex domain, one can define the
\emph{Hilbert metric} on $\Omega$ as follows:
\begin{definition}
  \label{defn:hilbert_metric}
  For a pair of distinct points $x, y \in \Omega$, we let
  \[
    d_\Omega(x, y) = \frac{1}{2}\log[u, v; x, y],
  \]
  where $u, v$ are the two points in $\dee \Omega$ such that
  $u, x, y, v$ lie on a projective line in that order, and
  $[a, b; c, d]$ is the \emph{cross-ratio}
  \[
    [a, b; c, d] = \frac{(d - a) (c - b)}{(c - a) (d - b)}.
  \]
  Here the differences can be measured under any affine identification
  of the projective line spanned by $x, y$ with $\R \cup \{\infty\}$.
\end{definition}
It turns out that $(\Omega, d_\Omega)$ is a proper geodesic metric
space, on which $\Aut(\Omega)$ acts by isometries. This means that
$\Aut(\Omega)$ acts properly on $\Omega$---in particular, discrete
subgroups of $\Aut(\Omega)$ act properly discontinuously.

The Hilbert metric gives us another perspective on the faces of
$\Omega$. Each face $F$ of $\Omega$ is a properly convex subset of
$\P(V)$, open in its own projective span. This allows us to define a
restricted Hilbert metric $d_F$ on $F$.

The proposition below is a standard result in the theory of convex
projective structures, and is a direct consequence of the definition
of the Hilbert metric.
\begin{prop}
  \label{prop:hilbert_metric_faces}
  Let $\Omega$ be a properly convex domain, let $F$ be a face of
  $\Omega$, and let $x_n$ be a sequence in $\Omega$ converging to some
  $x_\infty \in F$.

  For any $D > 0$, if $y_n \in \Omega$ is a sequence satisfying
  \[
    d_\Omega(x_n, y_n) \le D,
  \]
  then any accumulation point $y_\infty$ of $y_n$ lies in $F$, and
  \[
    d_F(x_\infty, y_\infty) \le D.
  \]
\end{prop}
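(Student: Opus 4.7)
The plan is to pass to a subsequence along which $y_n \to y_\infty$, and along which the endpoints $u_n, v_n \in \dee\Omega$ of the chord through $x_n, y_n$ converge to limits $u_\infty, v_\infty \in \overline{\Omega}$; such a subsequence exists because $\overline{\Omega}$ is compact in an affine chart containing it. If $x_\infty = y_\infty$ the conclusion is immediate (both claims are trivial), so assume $x_\infty \ne y_\infty$ throughout.

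Working in such an affine chart, the line $\ell_n$ through $x_n, y_n$ converges to the line $\ell_\infty$ through $x_\infty, y_\infty$, and the strict ordering $u_n, x_n, y_n, v_n$ on $\ell_n$ passes to a weak ordering of the limits along $\ell_\infty$. The hypothesis $d_\Omega(x_n, y_n) \le D$ is equivalent to $[u_n, v_n; x_n, y_n] \le e^{2D}$, and inspecting the parametric formula for the cross-ratio rules out any coincidence among the four limit points: for instance, $u_\infty = x_\infty$ would send the cross-ratio to infinity, contradicting the uniform bound. Hence $u_\infty, x_\infty, y_\infty, v_\infty$ are four distinct points in strict order on $\ell_\infty$.

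Next I would show $[u_\infty, v_\infty] \subset \dee\Omega$ and deduce $y_\infty \in F$. The segment lies in $\overline\Omega$ as a Hausdorff limit of chords in this closed convex set, and since its interior point $x_\infty$ is on $\dee\Omega$, a short convexity argument (if any interior point of the segment lay in $\Omega$, a cone from a nearby point of $\Omega$ would put $x_\infty$ itself in $\Omega$) forces the whole segment into $\dee\Omega$. The open segment $(u_\infty, v_\infty)$ is then contained in $\dee\Omega$ and contains both $x_\infty$ and $y_\infty$, so by the equivalence relation defining faces we get $y_\infty \in F$, together with $u_\infty, v_\infty \in \overline F$ (as limits of points of $F$) inside $\supp(F)$.

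For the metric estimate, let $u', v' \in \overline F$ be the endpoints of the maximal segment in $\overline F$ through $x_\infty, y_\infty$; these are the points used to compute $d_F(x_\infty, y_\infty)$. Since $[u_\infty, v_\infty] \subset \overline F$ passes through $x_\infty, y_\infty$, the points $u_\infty, v_\infty$ lie between $u'$ and $v'$, giving the order $u', u_\infty, x_\infty, y_\infty, v_\infty, v'$ on $\ell_\infty$. A direct derivative computation in an affine parameter on $\ell_\infty$ shows that moving either outer point inward only increases the cross-ratio $[\,\cdot\,, \,\cdot\,; x_\infty, y_\infty]$, so
\[
  e^{2 d_F(x_\infty, y_\infty)} = [u', v'; x_\infty, y_\infty] \le [u_\infty, v_\infty; x_\infty, y_\infty] = \lim_{n \to \infty} [u_n, v_n; x_n, y_n] \le e^{2D},
\]
and the bound $d_F(x_\infty, y_\infty) \le D$ follows. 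The main subtlety is precisely that $u_\infty, v_\infty$ need \emph{not} be the true chord endpoints of $\overline F$ (the face may extend further past them inside $\supp(F)$), so the limit cross-ratio cannot be identified with $d_F$ directly; the cross-ratio monotonicity step is what converts the limit cross-ratio into a usable upper bound.
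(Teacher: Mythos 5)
The paper itself gives no proof of this proposition; it is dismissed with the remark that it ``is a standard result in the theory of convex projective structures, and is a direct consequence of the definition of the Hilbert metric.'' So there is no argument of the author's to compare against. Read on its own, your proof is correct and supplies exactly the routine cross-ratio argument the author had in mind: you pass to a subsequence where the chord endpoints $u_n, v_n$ converge, use the uniform cross-ratio bound $[u_n,v_n;x_n,y_n]\le e^{2D}$ to rule out degenerate coincidences among the four limit points, push the Hausdorff limit chord $[u_\infty,v_\infty]$ into $\partial\Omega$ by convexity, conclude $y_\infty$ shares a face with $x_\infty$, and finally compare cross-ratios to pass from the limit chord to the true chord of $\overline F$. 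You also correctly isolate the one genuine subtlety here---that $u_\infty, v_\infty$ are not in general the endpoints of the maximal chord of $\overline F$---and handle it with the monotonicity of the cross-ratio in the outer arguments. That subtlety is real (one can build explicit examples where $u_\infty$ lands in the interior of $F$ rather than on $\partial F$), so the monotonicity step is not optional.

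One small imprecision: you assert $u_\infty, v_\infty \in \overline F$ ``as limits of points of $F$,'' but the sequences $u_n, v_n$ lie in $\partial\Omega$, not generally in $F$. The correct justification is the one you have already set up a sentence earlier: the open segment $(u_\infty, v_\infty)$ lies in $\partial\Omega$ and contains $x_\infty$, hence lies entirely in $F$ by the definition of a face, and its endpoints $u_\infty, v_\infty$ therefore lie in $\overline F\subset\mathrm{supp}(F)$. With that parenthetical replaced, the argument is clean and complete.
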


\subsubsection{Dynamics of $\Aut(\Omega)$}
\label{sec:flag_dynamics}

Fix a divergent sequence $g_n \in \PGL(d, \R)$. This sequence is
$k$-divergent for some $k$ with $1 \le k < d$, so fix such a $k$, and
choose a subsequence of $g_n$ which is $k$-contracting and whose
sequence of inverses is $(d-k)$-contracting. There are then uniquely
determined projective subspaces $E_+, E_- \subset \R^d$ with
complementary dimension such that for any $x \in \P(\R^d) \minus E_-$,
the sequence $g_nx$ accumulates on $E_+$, uniformly on compact subsets
of $\P(\R^d) \minus E_-$. We refer to the subspaces $E_+$, $E_-$ as
\emph{attracting} and \emph{repelling} subspaces, respectively. We
emphasize that $E_+$ and $E_-$ are \emph{not} necessarily transverse.

The faces of $\Omega$ are related to attracting and repelling
subspaces of divergent sequences in $\Aut(\Omega)$:
\begin{prop}
  \label{prop:attracting_spaces_in_faces}
  Let $\gamma_n$ be a divergent sequence in $\Aut(\Omega)$ for a
  properly convex domain $\Omega$, and suppose that for some
  $x \in \Omega$, the sequence $\gamma_n x$ accumulates on a face
  $F_+$ of $\Omega$. Then, after extracting a subsequence, there is an
  attracting subspace $E_+$ of $\gamma_n$ such that
  $[E_+] \subseteq \supp(F_+)$.
\end{prop}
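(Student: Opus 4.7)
The plan is to combine the Hilbert-metric rigidity from \Cref{prop:hilbert_metric_faces} with the contracting dynamics described in \Cref{sec:flag_dynamics}, closed off with a Zariski-density style argument at the end. After extracting a subsequence, I lift $\gamma_n$ to $\tilde\gamma_n \in \GL(d,\R)$ normalized so that $\|\tilde\gamma_n\|_{\mathrm{op}} = 1$, and extract a further subsequence so that $\tilde\gamma_n \to T$ for some singular $T \in \End(\R^d)$ with $\|T\| = 1$. Setting $k := \mathrm{rank}(T)$ and $E_+ := \Img(T)$, $E_- := \ker(T)$, a standard singular-value computation shows the chosen subsequence is $k$-contracting with attracting and repelling subspaces exactly $E_+$ and $E_-$.

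Next, I choose a small open Hilbert ball $B \subset \Omega$ centered at $x$ with $d_\Omega$-diameter at most some $D < \infty$. The isometry property $d_\Omega(\gamma_n x, \gamma_n y) = d_\Omega(x, y)$ combined with $\gamma_n x \to x_\infty \in F_+$ lets me apply \Cref{prop:hilbert_metric_faces} to each $y \in B$, giving that every accumulation point of $\gamma_n y$ lies in $F_+$. For $y \in B \setminus [E_-]$ one has $T(y) \neq 0$, so $\gamma_n y$ converges to the single point $[T(y)] \in [E_+]$; combining the two observations yields $[T(y)] \in F_+ \cap [E_+]$ for every such $y$.

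To finish, I observe that $[T]\colon \P(\R^d) \setminus [E_-] \to [E_+]$ is a surjective projective linear projection and hence an open map, so $V := [T](B \setminus [E_-])$ is a nonempty Euclidean-open subset of $[E_+]$ contained in $F_+ \subseteq \supp(F_+)$. Since $\supp(F_+)$ is a projective subspace and any proper projective subspace of $[E_+]$ has empty Euclidean interior in $[E_+]$, the inclusion $V \subseteq \supp(F_+)$ forces $[E_+] \subseteq \supp(F_+)$. The main subtlety is in the first step: one must take $k$ to be the \emph{rank} of the normalized limit $T$ rather than an arbitrary index for which the sequence is $k$-contracting, since otherwise $\Img(T)$ could be strictly smaller than the full $k$-dimensional attracting subspace and the argument would only yield an inclusion of a proper sub-subspace of $[E_+]$ into $\supp(F_+)$.
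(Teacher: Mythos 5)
Your proof is correct, and it takes a genuinely different route from the one in the paper. The paper works entirely inside the Grassmannian: after extracting a subsequence so that $\gamma_n y$ converges for every $y$ in a dense subset of $B$, it forms $W_\infty = \spn(B_\infty)$ of the limit set, shows $[W_\infty] \subseteq \supp(F_+)$ directly, and then manufactures an open set $U \subset \Gr(k,V)$ of $k$-planes spanned by points near $k$ well-chosen $z_i \in B$ so that $\gamma_n U \to \{W_\infty\}$; it then invokes an external proposition (Prop.\ 3.6 of the companion paper) asserting that uniform contraction of an open set in $\Gr(k,V)$ to a single $k$-plane forces that plane to be the attracting subspace. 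You instead work directly with a normalized lift $\tilde\gamma_n \to T$, identify the attracting/repelling pair as $(\Img T, \ker T)$ via singular values, and deduce $[E_+] \subseteq \supp(F_+)$ by openness of the linear projection $[T]$, using that a projective subspace of $[E_+]$ with nonempty Euclidean interior must be all of $[E_+]$. This is more self-contained (no citation needed, no diagonal argument, no auxiliary open set in the Grassmannian), at the cost of relying on the singular-value characterization of contraction, which is a standard fact but which you invoke as ``standard.'' Your final remark about why one must take $k = \operatorname{rank}(T)$ rather than any $k$ for which the sequence is $k$-contracting is apt: for larger $k'$, the image $[T]$ lands in a proper subspace of the $k'$-dimensional attracting subspace and the openness argument only controls that smaller subspace. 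Two small cosmetic points: the letter $V$ is already in use for the ambient vector space, so rename your open image $[T](B \setminus [E_-])$; and the subsequence extractions should be ordered so that $\gamma_n x \to x_\infty \in F_+$ and $\tilde\gamma_n \to T$ hold simultaneously before \Cref{prop:hilbert_metric_faces} is applied.
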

\begin{proof}
  Let $B$ be the ball of radius 1 about $x$ with respect to the
  Hilbert metric on $\Omega$, and let $x_\infty \in F_+$ be an
  accumulation point of the point of the sequence $\gamma_n x$. Using
  a diagonal argument, we can replace $\gamma_n$ with a subsequence so
  that for every $y$ in a countable dense subset of $B$, the sequence
  $\gamma_n \cdot y$ has a well-defined limit in the compact space
  $\P(V)$. \Cref{prop:hilbert_metric_faces} then implies that for
  every point $y \in B$, the sequence $\gamma_n y$ converges to a
  unique point in $F_+$.

  Let $B_\infty$ be the set of accumulation points of
  $\gamma_n \cdot B$, and let $W_\infty$ be the subspace
  $\spn(B_\infty) \subset V$.

  \Cref{prop:hilbert_metric_faces} implies that $B$ is a subset of the
  face $F = F_\Omega(x_\infty)$, so $[W_\infty]$ is a projective
  subspace of $\supp(F)$. Let $k = \dim(W_\infty)$. We claim that
  there is an open subset $U$ of the Grassmannian $\Gr(k, V)$ so that
  \[
    \gamma_nU \to \{W_\infty\}.
  \]
  This implies the desired result by
  e.g. \cite[Prop. 3.6]{Weisman2022}.

  To see the claim, fix $k$ points
  $z_1, \ldots, z_k \in B_\Omega(x, 1)$ so that the limits of the
  sequences $\gamma_n z_1, \ldots \gamma_n z_k$ span the projective
  subspace $[W_\infty]$. \Cref{prop:hilbert_metric_faces} implies that
  for some fixed $\eps > 0$, if $z_i'$ lies in the ball of radius
  $\eps$ about $z_i$, then the limits of the sequences
  \[
    \gamma_n z_1', \ldots, \gamma_nz_k'
  \]
  are in general position, and therefore also span $[W_\infty]$.

  For each $1 \le i \le k$, let $B_i$ denote the ball of radius $\eps$
  about $z_i$. Then, if $U$ is the open set
  \[
    \{W \in \Gr(k, V) : W = u_1 \oplus \ldots \oplus u_k, [u_i] \in
    B_i\},
  \]
  we have that $\gamma_n U \to \{W_\infty\}$, as required.
\end{proof}

\section{Convex cocompactness in projective space}
\label{sec:convex_cocompact}

Fix a real vector space $V$ of dimension $d$. Our goal in this section
is to prove \Cref{thm:convex_cocompact_egf}, which says that
\emph{convex cocompact} representations of relatively hyperbolic
groups in $\PGL(V)$ give examples of EGF representations. We begin by
recalling the precise definition of a (projectively) convex cocompact
group in $\P(V)$.

\begin{definition}[\cite{dgk2017convex}, Definitions 1.10 and 1.11]
  Let $\Omega$ be a properly convex domain in $\P(V)$, and let
  $\Gamma \subseteq \Aut(\Omega)$ be discrete.

  \begin{enumerate}
  \item The \emph{full orbital limit set} $\Lambda_\Omega(\Gamma)$ of
    $\Gamma$ is the union (over all $x \in \Omega$) of the set of
    accumulation points in $\dee \Omega$ of $\Gamma \cdot x$.
  \item The group $\Gamma$ \emph{acts convex cocompactly on $\Omega$}
    if the convex hull of $\Lambda_\Omega(\Gamma)$ is nonempty, and
    $\Gamma$ acts cocompactly on the convex hull of
    $\Lambda_\Omega(\Gamma)$ in $\Omega$.
  \item $\Gamma$ is \emph{convex cocompact in $\P(V)$} if it acts
    convex cocompactly on some properly convex domain
    $\Omega \subset \P(V)$.
  \end{enumerate}
\end{definition}

Danciger-Gu\'eritaud-Kassel show that if $\Gamma$ is a hyperbolic
group acting convex cocompactly in $\P(V)$, then the inclusion
$\Gamma \hookrightarrow \PGL(V)$ is a $1$-Anosov representation.

In \cite{weisman2020dynamical}, we showed that any group acting convex
cocompactly on some domain $\Omega \subset \P(V)$ acts with
``Anosov-like'' expansion dynamics on the faces of $\dee \Omega$. We
further showed that, when $(\Gamma, \mc{H})$ is a relatively
hyperbolic pair, $\Gamma$ acts convex cocompactly on $\Omega$, and
each $H \in \mc{H}$ also acts convex cocompactly on $\Omega$, then
there is an equivariant embedding of the Bowditch boundary $\bgamh$
into a \emph{quotient} of $\dee \Omega$.

In subsequent work \cite{iz2022structure}, Islam-Zimmer showed that
one does not need to assume that each $H \in \mc{H}$ acts convex
cocompactly on $\Omega$: it turns out that this follows automatically
from the fact that $\Gamma$ acts convex cocompactly. Together with our
earlier work, this gives the following:
\begin{theorem}[See \cite{weisman2020dynamical}, Theorem 1.16 and
  \cite{iz2022structure}, Theorem 1.6]
  \label{thm:convex_cocompact_boundary_embedding}
  Let $(\Gamma, \mc{H})$ be a relatively hyperbolic pair, and suppose
  $\rho(\Gamma)$ acts convex cocompactly on a properly convex domain
  $\Omega$. Then there is an equivariant homeomorphism
  \[
    \psi:\bgamh \to \Lambda_\Omega(\Gamma) / \sim,
  \]
  where $x \sim y$ if $x,y \in \Lambda_\Omega(H)$ for some
  $H \in \mc{H}$.
\end{theorem}

\begin{remark}
  Islam-Zimmer also proved a version of
  \Cref{thm:convex_cocompact_boundary_embedding} in the more general
  context of \emph{naive} convex cocompact group actions, which we do
  not discuss here.
\end{remark}

Before we can proceed with the proof of
\Cref{thm:convex_cocompact_egf}, we will need a slightly different
perspective on the full orbital limit set of a discrete group
$\Gamma \subseteq \Aut(\Omega)$.

\begin{definition}
  Let $\Omega$ be a convex projective domain. The \emph{dual domain}
  $\Omega^* \subset \P(V^*)$ is the set
  \[
    \Omega^* = \{w \in \P(V^*) : \hyp{w} \cap \overline{\Omega} =
    \emptyset\}.
  \]
\end{definition}

If $\Gamma \subset \PGL(V)$ is a subgroup of $\Aut(\Omega)$, then its
dual $\Gamma^* \subset \PGL(V^*)$ is a subgroup of
$\Aut(\Omega^*)$. If $\Gamma$ is convex cocompact in $\P(V)$, then
\cite{dgk2017convex}, Proposition 5.6 implies that there is some
$\Gamma$-invariant domain $\Omega$ so that $\Gamma$ acts convex
cocompactly on $\Omega$, and $\Gamma^*$ acts convex cocompactly on
$\Omega^*$.

\begin{definition}
  Let $\Gamma \subseteq \Aut(\Omega)$.

  \begin{enumerate}
  \item The \emph{dual full orbital limit set}
    $\Lambda_\Omega^*(\Gamma)$ is the full orbital limit set of
    $\Gamma^*$ in $\dee \Omega^*$.
  \item
    The \emph{flag-valued full orbital limit set}
    $\hat{\Lambda}_\Omega(\Gamma)$ is the set
    \[
      \hat{\Lambda}_\Omega(\Gamma) := \{(x, w) \in \projflags(V) : x
      \in \Lambda_\Omega(\Gamma), w \in \Lambda^*_\Omega(\Gamma)\}.
    \]
  \item The \emph{maximal domain} $\maxdomain(\Gamma)$ is the unique
    connected component of
    \[
      \P(V) - \bigcup_{w \in \Lambda_\Omega^*(\Gamma)} \hyp{w}
    \]
    containing $\Omega$. Equivalently, $\maxdomain(\Gamma)$ is the
    dual of the convex hull of $\Lambda_\Omega^*(\Gamma)$ in
    $\Omega^*$.
  \end{enumerate}
\end{definition}

We emphasize that $\maxdomain(\Gamma)$ is \emph{not} necessarily a
properly convex set, which means that we cannot always define a
Hilbert metric on it (so we do not have a guarantee that $\Gamma$ acts
properly discontinuously in general). However, when $\Gamma$ acts
convex cocompactly on $\Omega$, we do get a properly discontinuous
action, thanks to the following argument suggested by Jeff Danciger
and Fanny Kassel:

\begin{prop}
  \label{prop:maxdomain_accumulation}
  Let $\Gamma$ act convex cocompactly on a properly convex comain
  $\Omega$. For any sequence $\gamma_n \in \Gamma$ and any
  $x \in \maxdomain(\Gamma)$, the sequence $\gamma_n \cdot x$
  accumulates in $\Lambda_\Omega(\Gamma)$, uniformly on compacts. In
  particular, $\Gamma$ acts properly discontinuously on
  $\maxdomain(\Gamma)$.
\end{prop}
\begin{proof}
  When $\maxdomain(\Gamma)$ is a \emph{properly} convex domain, this
  follows immediately from Proposition 4.18 in \cite{dgk2017convex},
  which says that whenever $\Gamma$ acts convex cocompactly on some
  domain $\Omega$, and $\Omega'$ is any $\Gamma$-invariant properly
  convex domain containing $\Omega$, then $\Gamma$ acts convex
  cocompactly on $\Omega'$ and
  $\Lambda_\Omega(\Gamma) = \Lambda_{\Omega'}(\Gamma)$.

  So, we consider the case where $\maxdomain(\Gamma)$ is \emph{not}
  properly convex. We may assume our domain $\Omega$ is chosen so that
  $\Gamma$ acts convex cocompactly on both $\Omega$ and
  $\Omega^* \subset \P(V^*)$. Since $\maxdomain(\Gamma)$ is not
  properly convex, its dual $\maxdomain(\Gamma)^*$ (given by the
  convex hull of $\Lambda^*_\Omega(\Gamma)$ in $\Omega^*$) has empty
  interior (i.e. it spans a proper projective subspace of $\P(V^*)$).

  Given any $\eps > 0$, we let $\Omega_\eps^*$ be the uniform
  $\eps$-neighborhood of $\maxdomain(\Gamma)^*$, with respect to the
  Hilbert metric on $\Omega^*$. We let $\Omega_\eps \subset \P(V)$
  denote the dual of $\Omega_\eps^*$. Note that $\Omega_\eps$ is a
  $\Gamma$-invariant properly convex subset of $\maxdomain(\Gamma)$,
  containing $\Omega$.

  Since duality reverses inclusions, and the intersection
  \[
    \bigcap_{\eps \to 0} \Omega_\eps^*
  \]
  is exactly the set $\maxdomain(\Gamma)^*$, the union
  \[
    \bigcup_{\eps \to 0} \Omega_\eps
  \]
  is the set $\maxdomain(\Gamma)$. So, if we fix a compact set
  $K \subset \maxdomain(\Gamma)$, for some $\eps > 0$ we have
  $K \subset \Omega_\eps$. Then we apply Proposition 4.18 in
  \cite{dgk2017convex} to the \emph{properly} convex domain
  $\Omega_\eps$ to see that for any sequence $\gamma_n \in \Gamma$,
  $\gamma_n \cdot K$ accumulates in $\Lambda_\Omega(\Gamma)$.
\end{proof}

We need a few more results before we can prove
\Cref{thm:convex_cocompact_egf}. We quote the following observation
from \cite{weisman2020dynamical}:
\begin{prop}[\cite{weisman2020dynamical}, Proposition 8.1]
  \label{prop:segments_in_peripherals}
  Let $\Gamma \subset \PGL(V)$ act convex cocompactly on a properly
  convex domain $\Omega$. If $\Gamma$ is hyperbolic relative to a
  collection of subgroups $\mc{H}$ also acting convex cocompactly on
  $\Omega$, then every nontrivial projective segment in
  $\Lambda_\Omega(\Gamma)$ is contained in $\Lambda_\Omega(H)$ for
  some $H \in \mc{H}$.
\end{prop}

Using a theorem of Danciger-Gu\'eritaud-Kassel (\cite{dgk2017convex},
Theorem 1.18), one can strengthen this result:
\begin{cor}
  \label{cor:segments_in_peripherals}
  In the context of \Cref{prop:segments_in_peripherals}, it is
  possible to choose the convex domain $\Omega$ so that every
  nontrivial projective segment in $\dee \Omega$ is contained in
  $\Lambda_\Omega(H)$ for some $H \in \mc{H}$.
\end{cor}

We also observe:
\begin{prop}
  \label{prop:boundary_embedding_conical_pts}
  Let $(\Gamma, \mc{H})$ be a relatively hyperbolic pair, let
  $\rho:\Gamma \to \PGL(V)$ be representation such that $\rho(\Gamma)$
  acts convex cocompactly on a domain $\Omega$, and let
  $\psi:\bgamh \to \Lambda_\Omega(\Gamma) / \sim$ be the map
  coming from \Cref{thm:convex_cocompact_boundary_embedding}.

  If $z \in \bgamh$ is a conical limit point, and $\gamma_n$ is a
  sequence limiting to $z$ in
  $\overline{\Gamma} = \Gamma \sqcup \bgamh$, then $\psi(z) \in \P(V)$
  is the unique one-dimensional attracting subspace for
  $\rho(\gamma_n)$ in $\P(V)$.
\end{prop}
\begin{proof}
  It suffices to show that any subsequence of $\gamma_n$ has a further
  subsequence which has $\psi(z)$ as a one-dimensional attracting
  subspace. So, using the convergence group property, we can take a
  subsequence and assume that there is some point $y \in \bgamh$ so
  that $\gamma_n$ converges to $z$ on every point in the set
  $\bgamh - \{y\}$. We can further assume that the pair
  $(\Gamma, \mc{H})$ is not elementary, so $\bgamh$ contains
  infinitely many points.

  So, by \Cref{cor:segments_in_peripherals}, we can find distinct
  points $u, v \in \Lambda_\Omega(\Gamma)$ so that the projective line
  segment $(u,v)$ lies in $\Omega$, and $\rho(\gamma_n)u$,
  $\rho(\gamma_n)v$ both converge to $\psi(z)$. Corollary 4.10 in
  \cite{dgk2017convex} implies that
  $F_\Omega(\psi(z)) \subset \Lambda_\Omega(\Gamma)$, and then
  \Cref{cor:segments_in_peripherals} implies that
  $F_\Omega(\psi(z)) = \{\psi(z)\}$.

  Then for any $x \in (u,v)$, $\rho(\gamma_n)x$ converges to
  $\psi(z)$, and we are done by
  \Cref{prop:attracting_spaces_in_faces}.
\end{proof}

\begin{proof}[Proof of \Cref{thm:convex_cocompact_egf}]
  Fix a $d$-dimensional real vector space $V$, let $(\Gamma, \mc{H})$
  be a relatively hyperbolic pair, and let $\rho:\Gamma \to \PGL(V)$
  be a representation such that $\rho(\Gamma)$ acts convex cocompactly
  on a properly convex domain $\Omega \subset \P(V)$. This implies
  (see \cite{iz2022structure}) that each $H \in \mc{H}$ also acts
  convex cocompactly on $\Omega$.

  The first step in the proof is to define a $\rho(\Gamma)$-invariant
  subset $\hat{\Lambda} \subset \projflags(V)$ and a boundary
  extension $\hat{\phi}:\hat{\Lambda} \to \bgamh$. We use the map
  $\psi$ coming from \Cref{thm:convex_cocompact_boundary_embedding} to
  define an equivariant surjection $\phi:\Lambda \to \bgamh$, where
  the preimage of each parabolic point $p$ in $\bgamh$ is exactly
  $\Lambda_\Omega(H)$ for $H = \mr{Stab}_\Gamma(p)$.

  If we let $\Lambda^*$ be the full orbital limit set of the
  $\Gamma$-action on $\Omega^*$, we can similarly find an equivariant
  surjection $\phi^*:\Lambda^* \to \bgamh$, where
  ${\phi^*}^{-1}(z)$ is a single hyperplane if $z$ is a conical limit
  point, and the dual full orbital limit set of $\mr{Stab}_\Gamma(z)$
  if $z$ is a parabolic point.

  We consider the set
  \[
    \hat{\Lambda} = \{ (x, w) \in \projflags : x \in \Lambda, w \in
    \Lambda^*\}.
  \]

  Each element of $\Lambda^*$ is a supporting hyperplane of the domain
  $\Omega$. \Cref{cor:segments_in_peripherals} implies that for every
  point $(x, w)$ in $\hat{\Lambda}$, either:
  \begin{enumerate}
  \item $x = \phi^{-1}(z)$ and $w = {\phi^*}^{-1}(z)$ for a conical
    limit point $z \in \bgamh$, or
  \item $x \in \Lambda_H$ and $w \in \Lambda_H^*$ for a peripheral
    subgroup $H \in \mc{H}$.
  \end{enumerate}
  This allows us to combine $\phi$ and $\phi^*$ to get a well-defined
  equivariant surjection $\hat{\phi}:\hat{\Lambda} \to \bgamh$.

  The next step is to define the open subsets $C_z \subset \projflags$
  for each $z \in \bgamh$. If $z \in \bgamh$ is a conical
  limit point, we define the set $C_z$ by
  \[
    C_z = \{\nu \in \projflags : \nu \textrm{ is opposite to }
    \phi^{-1}(z)\}.
  \]

  Otherwise, if $z$ is a parabolic point, we consider the maximal
  domain $\maxdomain(H) \subset \P(V)$ for $H =
  \Stab_\Gamma(z)$. Dually, we can define
  $\Omega^*_{\mathrm{max}}(H) \subset \P(V^*)$, viewing
  $\Lambda_\Omega(H)$ as the dual full orbital limit set of $H^*$
  acting on $\Omega^*$.

  Then, we define
  \[
    C_z = \{(x, w) \in \projflags(V) : x \in \maxdomain(H), w \in
    \Omega^*_{\mathrm{max}}(H)\}.
  \]
  
  For every $z \in \bgamh$, $C_z$ is open, and
  \Cref{cor:segments_in_peripherals} implies that $C_z$ contains
  $\hat{\phi}^{-1}(z')$ for every $z' \ne z$ in $\dee
  \Gamma$.

  The last step is to check that the map $\hat{\phi}$ actually extends
  the convergence group action of $\Gamma$ on $\bgamh$, using the sets
  $C_z$. To do so, we appeal to
  \Cref{prop:conical_peripheral_implies_egf}.

  First, let $\gamma_n$ be a sequence in $\Gamma$ limiting conically
  to $z \in \bgamh$. \Cref{prop:boundary_embedding_conical_pts}
  implies that $\phi^{-1}(z)$ is the unique one-dimensional attracting
  subspace for $\rho(\gamma_n)$ in $\P(V)$. Dually, $(\phi^*)^{-1}(z)$
  is the unique one-dimensional attracting subspace for
  $\rho(\gamma_n)$ in $\P(V^*)$. So the sequence $\rho(\gamma_n)$ is
  $(1, d-1)$-divergent, and so is the sequence $\rho(\gamma_n^{-1})$.

  Further, observe that if $\gamma_n$ is any sequence in $\Gamma$ such
  that $\rho(\gamma_n)$ is $1$-divergent, then any $1$-limit point of
  $\rho(\gamma_n)$ must lie $\Lambda_\Omega(\Gamma)$. The same holds
  for $(d-1)$-divergent sequences and
  $\Lambda^*_\Omega(\Gamma)$. Together this implies that the first
  condition of \Cref{prop:conical_peripheral_implies_egf} is
  satisfied.

  On the other hand, if $z$ is a parabolic point and $\gamma_n$ is an
  infinite sequence in $\Stab_\Gamma(z)$,
  \Cref{prop:maxdomain_accumulation} implies that for any compact
  $K \subset C_z$, the set $\rho(\gamma_n) \cdot K$ eventually lies in
  any given neighborhood of $\phi^{-1}(z)$, which fulfills the second
  condition of \Cref{prop:conical_peripheral_implies_egf}.
\end{proof}


\section{Generalized cusps}
\label{sec:generalized_cusps}

In this section, we wish to consider deformations of the holonomy of a
finite-volume convex projective manifold $M = \Omega / \Gamma$, where
$\Omega$ is a strictly convex subset of projective space with nonempty
interior. (Note that, in contrast to the previous section, in this
section we assume that $\Omega$ is strictly convex, but we do
\emph{not} assume that it is an open subset of projective space).

As mentioned in the introduction to this paper, whenever $M$ is such a
manifold, work of Cooper-Long-Tillman \cite{clt2015convex},
Crampon-Marquis \cite{cm2014finitude}, and Zhu
\cite{zhu2019relatively} implies that the holonomy representation
$\rho:\pi_1M \to \PGL(d, \R)$ is relatively $1$-Anosov in the sense of
\Cref{defn:relative_anosov}, and therefore an EGF representation by
\Cref{thm:egf_relative_anosov}.

Our goal in this section is to prove \Cref{thm:vf_deformations}, which
says that certain deformations of $\rho$ are peripherally stable, and
hence give rise to EGF representations. These peripherally stable
deformation spaces can contain representations $\rho'$ preserving a
properly convex domain $\Omega'$ which is \emph{not} strictly
convex. The quotient $\Omega'/\rho'(\pi_1M)$ is in general an
\emph{infinite}-volume convex projective manifold homeomorphic to $M$:
it is the union of a compact piece and finitely many \emph{generalized
  cusps}. We provide the definition below:

\begin{definition}[See \cite{clt2018deforming},
  \cite{bcl2020generalized}]
  Let $\Omega \subset \P(\R^d)$ be a properly convex set with nonempty
  interior, and let $\Gamma \subseteq \Aut(\Omega)$ be discrete. A
  manifold $C = \Omega / \Gamma$ is a \emph{generalized cusp} if $C$
  has compact and strictly convex boundary, $\Gamma \simeq \pi_1C$ is
  virtually abelian, and $C$ is homeomorphic to
  $\dee C \times [0, \infty)$.
\end{definition}

\subsection{Flag stability for generalized cusps}

In \cite{clt2018deforming}, Cooper-Long-Tillmann prove that if one
deforms the holonomy representation of a generalized cusp in certain
controlled way, then (for a small enough deformation), the resulting
representation is still the holonomy of a generalized cusp. To state
their result, for any virtually abelian group $H$, let
$\Homvf(H, \PGL(d, \R)$ denote the space of \emph{virtual
  flag} representations $\rho:\Gamma \to \PGL(d, \R)$, consisting of
representations whose image is virtually conjugate to a group of
upper-triangular matrices.

\begin{theorem}[\cite{clt2018deforming}, Theorem 6.25]
  \label{thm:cusps_open_in_vfg}
  Let $C$ be a generalized cusp, and let $\mc{U}$ be the set of
  holonomies of convex projective structures on $C$ with strictly
  convex boundary. Then $\mc{U}$ is an open subset of
  $\Homvf(\pi_1C)$.
\end{theorem}

Using a gluing procedure, Cooper-Long-Tillmann can then show that
whenever $M$ is a finite-volume convex projective manifold, certain
restricted deformations of its holonomy give rise to new convex
projective structures on $M$. (In fact, their result holds in greater
generality, but in this paper we consider only the finite-volume
case.) For this result, recall from the introduction that when
$(\Gamma, \mc{H})$ is a relatively hyperbolic pair,
$\Homvf(\Gamma, \PGL(d, \R), \mc{H})$ denotes the space of
representations $\rho:\Gamma \to \PGL(d, \R)$ such that the
restriction of $\rho$ to each $H \in \mc{H}$ lies in
$\Homvf(H, \PGL(d, \R))$.

\begin{theorem}[See \cite{clt2018deforming}, Theorem 0.1]
  \label{thm:clt_gluing}
  Let $M = \Omega/\Gamma$ be a finite-volume convex projective
  manifold with holonomy $\rho:\Gamma \to \PGL(d, \R)$. Let $\mc{H}$
  be the collection of conjugates of cusp groups of $M$. Then an open
  neighborhood of $\rho$ in
  $\Homvf(\Gamma, \PGL(d, \R), \mc{H})$ consists of
  holonomy representations of convex projective manifolds homeomorphic
  to $M$, each of which is the union of a compact piece and finitely
  many generalized cusps.
\end{theorem}

We can think of the relative stability theorem for EGF representations
as a kind of dynamical analog of this ``geometric gluing'' result,
since \Cref{thm:stability_theorem} gives us a way to understand the
\emph{topological dynamics} of the deformation of some representation,
provided we have control over the topological dynamics of the
restrictions of the representation to its cusp
groups. \Cref{thm:vf_deformations} says that small deformations inside
of $\Homvf(\Gamma, \PGL(d, \R), \mc{H})$ are controlled in
precisely the sense we need for this to apply.

Proving \Cref{thm:vf_deformations} therefore boils down to analyzing
the dynamics implicit in \Cref{thm:cusps_open_in_vfg}: using the
geometry of generalized cusps, we show that if
$\rho:\pi_1C \to \PGL(d, \R)$ is the holonomy of a hyperbolic cusp,
then the large-scale dynamics of any nearby representation
$\rho':\pi_1C \to \PGL(d, \R)$ in
$\Homvf(\pi_1C, \PGL(d, \R))$ are ``close'' to the
dynamics of $\rho$ (in the sense made precise by the definition of
peripheral stability).

\begin{remark}
  We expect that a version of \Cref{thm:vf_deformations} also holds
  with weaker assumptions on $M$, which are more in line with the
  original assumptions in the Cooper-Long-Tillmann stability
  result. For instance, we conjecture that whenever $M$ is a convex
  projective manifold with strictly convex boundary, each end of $M$
  is a generalized cusp, and $\pi_1M$ is relatively hyperbolic
  (relative to a collection of subgroups $\mc{H}$), then the holonomy
  of $M$ is an EGF representation, and
  $\Homvf(\pi_1M, \PGL(d, \R), \mc{H})$ is a peripherally
  stable subspace.
\end{remark}

\subsection{Generalized horospheres}

In \cite{clt2018deforming}, Cooper-Long-Tillmann show that if $C$ is a
generalized cusp with holonomy $\rho:\pi_1C \to \PGL(d,\R)$, there is
a finite-index subgroup $\Gamma_1 \subseteq \pi_1C$ (depending only on
$\pi_1C$ and $d$) so that $\rho(\Gamma_1)$ is a lattice in a
\emph{syndetic hull} of $\rho(\Gamma_1)$: a uniquely determined
connected Lie group $T(\rho) \subset \PGL(d, \R)$, conjugate into the
group of upper triangular matrices. This group is called the
\emph{translation group} of the cusp.

We may assume that $\Gamma_1$ is free abelian, so it is a lattice in
$\Gamma_1 \ten_\Z \R \simeq \R^{d-2}$. The restriction
\[
  \rho|_{\Gamma_1}:\Gamma_1 \to \PGL(d, \R)
\]
extends uniquely to an embedding of Lie groups
\[
  \iota_\rho:\R^{d-2} \hookrightarrow \PGL(d, \R)
\]
with image $T(\rho)$.

We observe the following:
\begin{prop}
  \label{prop:cusp_embedding_c1_continuous}
  Let $C$ be a generalized cusp. The embedding
  \[
    \iota_\rho:\R^{d-2} \to \PGL(d, \R)
  \]
  varies continuously with
  $\rho \in \Homvf(\pi_1C, \PGL(d, \R))$ in the
  compact-open topology on maps from $\R^{d-2}$ into $\PGL(d, \R)$.
\end{prop}
\begin{proof}
  The Lie algebra of $\PGL(d, \R)$ is identified with
  $\mf{sl}(d, \R)$. Fix a finite set $S$ of generators for
  $\Gamma_1$. The subspace $\mf{s}_\rho \subset \mf{sl}(d, \R)$
  spanned by $\log(\rho(S))$ varies smoothly with $\rho$, and the
  induced map
  \[
    \R^{d-2} \to \mf{sl}(d, \R)
  \]
  with image $\mf{s}_\rho$ varies continuously in the compact-open
  topology. The embedding $\iota_\rho$ is given by composition with
  the exponential map.
\end{proof}

When $C = \Omega / \Gamma$ is projectively equivalent to a cusp in
some hyperbolic manifold, we can assume that $\Omega$ is a closed
horoball in $\H^d$. The nonideal boundary $\dee_n\Omega$ is a
horosphere preserved by $\Gamma$, which carries a $\Gamma$-invariant
Euclidean metric. In this case the translation group is the group of
Euclidean translations on $\dee_n\Omega$.

When $C$ is a generalized cusp, we can always find some orbit of the
translation group $T(\rho)$ in $\P(\R^d)$ which is a strictly convex
hypersurface (see Proposition 6.22 in \cite{clt2018deforming}). This
hypersurface is called a \emph{generalized horosphere}. Its convex
hull in $\P(\R^d)$ is a \emph{generalized horoball}. We can always
choose this horoball so that its quotient by $\Gamma$ is contained in
the generalized cusp $C$.

Now consider a convex projective manifold $M = \Omega / \Gamma$ with
strictly convex boundary which can be written as a union of a compact
piece and finitely many generalized cusps. It is always possible to
choose the cusps so that each cusp $C \subset M$ is a quotient
$C = \Omega_C / \Gamma_C$, where $\Omega_C \subset \Omega$ is a convex
subset whose nonideal boundary is a generalized horosphere, and
$\Gamma_C = \pi_1C$ is the cusp group. The boundary $\dee_n\Omega_C$
is homogeneous, in the sense that the translation group $T(\Gamma_C)$
acts simply transitively on it.

\subsection{The ideal boundary}

The Ballas-Cooper-Leitner classification of generalized cusps allows
us to get a more explicit description of the ideal boundary of
$\Omega$. Given a generalized cusp $C = \Omega / \Gamma$, we let
$\Omega_C$ denote the ``standard'' $\Gamma$-invariant domain with
homogeneous nonideal boundary, alluded to above.

\begin{prop}[Lemmas 1.24 and 1.25 in \cite{bcl2020generalized}]
  Let $C = \Omega_C / \Gamma$ be a generalized cusp. The ideal
  boundary of $\Omega_C$ is a projective $k$-simplex $\Delta_C$. There
  is a unique supporting hyperplane $H_C$ of $\Omega_C$ containing
  $\Delta_C$, and the affine chart
  \[
    \A_C = \P(\R^d) - H_C
  \]
  is the unique affine chart containing $\Omega_C$ as a closed subset.
\end{prop}

The vertices of $\Delta_C$ must be preserved by $\Gamma$, and in fact
they are all eigenvectors for the translation group $T(\Gamma)$.

Each generalized horosphere $S_C$ for $C$ is a strictly convex
hypersurface contained in the affine chart $\A_C$. The closure of this
hypersurface in $\P(\R^d)$ is either $S_C \cup \dee\Delta_C$ (if $C$
is a ``type $d-1$'' cusp) or $S_C \cup \Delta_C$ (if $C$ is any other
type of cusp).

\subsection{Deformations of convex hypersurfaces}

The main ingredient in the proof of \Cref{thm:vf_deformations} is the
following:

\begin{lem}
  \label{lem:convex_hypersurface_deformation}
  Let $C$ be a hyperbolic cusp with holonomy $\rho$. Let $p_C$ be the
  cusp point, and let $H_C$ be the unique supporting hyperplane of
  $\Omega_C$ at $p_C$.

  Let $K$ be a compact subset in $\A_C = \P(\R^d) - H_C$, let
  $U \subset \P(\R^d)$ be an open subset containing $p_C$, and let
  $F \subset \pi_1C$ be a cofinite subset such that
  $\rho(F)K \subset U$.

  Then there exists a neighborhood $\mc{W}$ of $\rho$ in
  $\Homvf(\pi_1C, \PGL(d, \R))$ so that for any $\rho' \in \mc{W}$, we
  have
  \[
    \rho'(F)K \subset U.
  \]
\end{lem}
\begin{proof}
  \cite{clt2018deforming}, Theorem 6.25 implies that we can choose a
  neighborhood $\mc{W}$ of $\rho$ in
  $\Homvf(\pi_1C, \PGL(d, \R))$ consisting of holonomies
  of generalized cusps. For any $\rho' \in \mc{W}$, we let $\Omega'$
  denote a ``standard'' properly convex set preserved by
  $\rho'(\pi_1C)$, whose non-ideal boundary is a generalized
  horosphere.

  Since $p_C$ and $H_C$ are respectively the unique eigenvector and
  fixed hyperplane of $\rho(\pi_1C)$, we can choose our neighborhood
  $\mc{W}$ so that for any $\rho' \in \mc{W}$, any eigenvectors and
  fixed hyperplanes of $\rho'(\pi_1C)$ are close to $p_C$, $H_C$.

  In particular, we can choose $\mc{W}$ so that the ideal boundary of
  $\Omega'$ is a $k$-simplex $\Delta'$ contained in $U$. And, by
  applying a small conjugation in $\PGL(d+1, \R)$, we can assume that
  $H_C$ is the unique supporting hyperplane of $\Omega'$ containing
  $\Delta'$.

  Let $T(\rho)$ be the translation group of $\rho$. For any $x \in K$,
  the orbit $T(\rho) \cdot x$ is a paraboloid in $\A_C$, and as $x$
  varies in $K$ we obtain a family of paraboloids foliating a region
  of $\A_C$. We can write $\A_C = \R^{d-1} \times \R$, and then view
  each paraboloid as the graph of a function
  \[
    f_\rho:\R^{d-2} \to \R.
  \]
  The function $f_\rho$ is determined by the condition
  \[
    (u, f_\rho(u)) = \iota_\rho(u) \cdot x.
  \]
  Here $\iota_\rho:\R^{d-2} \to \PGL(d, \R)$ restricts to $\rho$ on a
  finite-index subgroup $\Gamma_1 \subset \pi_1C$, with $\Gamma_1$
  identified with $\Z^{d-2} \subset \R^{d-2}$.

  If $T(\rho')$ is the translation group of $\rho'$, then Lemma 6.24
  in \cite{clt2018deforming} implies that for each $x \in K$, the
  orbit $T(\rho') \cdot x$ is a strictly convex hypersurface
  $S' \subset \A_C$. The hypersurface $S'$ is the graph of a map
  $f_{\rho'}:\R^{d-2} \to \R$, satisfying
  \[
    (u, f_{\rho'}(u)) = \iota_{\rho'}(u) \cdot x.
  \]
  \Cref{prop:cusp_embedding_c1_continuous} implies that $f_{\rho'}$
  varies continuously (in the compact-open topology on continuous maps
  $\R^{d-2} \to \R$) as $\rho'$ varies in $\mc{W}$ and as $x$ varies
  in $K$.

  We fix a norm $||\cdot||$ on $\R^{d-2}$. There is a constant $D$ so
  that for any $(u, v) \in \A_C$, if $||u|| > 1$ and
  $|v| / ||u|| > D$, then $(u, v)$ is contained in the neighborhood
  $U$ of $p_C$.

  The function $f_{\rho'}$ is strictly convex, and we can assume that
  it is nonnegative and uniquely minimized at the origin. So, if
  $f_{\rho'}(u) / ||u|| > D$ on $\{u \in \R^{d-2} : ||u|| = M\}$ for
  some constant $M$, then $f_{\rho'}(u) / ||u|| > D$ for all $u$ with
  $||u|| \ge M$.

  So, as long as $\mc{W}$ is sufficiently small, there is a fixed ball
  $B \subset \R^{d-1}$ so that if $u \in \R^{d-1} - B$, then
  \[
    (u, f_{\rho'}(u)) \in U
  \]
  for any $\rho' \in \mc{W}$. As both the constants $D, M$ above vary
  continuously as $x$ varies in the compact subset $K$, we can choose
  $\mc{W}$ so that the above holds for any $f_{\rho'}$ determined by
  any $x \in K$.

  The ball $B$ contains at most finitely many elements of
  $\Gamma_1 - F$. So we can in fact choose $\mc{W}$ small enough so
  that for any $\rho' \in \mc{W}$, the set $\rho'(\Gamma_1 - F)K$ lies
  in $U$. Then since $\Gamma_1$ has finite index in $\pi_1C$, we can
  shrink $\mc{W}$ even further to guarantee that for any
  $\rho' \in \mc{W}$,
  \[
    \rho'(\pi_1C - F)K
  \]
  lies in $U$ as well.
\end{proof}

\subsection{Peripheral stability}

\begin{proof}[Proof of \Cref{thm:vf_deformations}]
  Let $\rho:\pi_1M \to \PGL(d, \R)$ be the holonomy of a finite-volume
  convex projective manifold $M$, and let $\Omega$ be a
  $\rho$-invariant strictly convex domain such that
  $M = \Omega / \rho(\pi_1M)$. Write $\Gamma = \pi_1M$, and let
  $\mc{H}$ be the collection of cusp groups, so $(\Gamma, \mc{H})$ is
  a relatively hyperbolic pair, and for each $H \in \mc{H}$, $\rho|_H$
  is the holonomy of a hyperbolic cusp.

  Since $\Omega$ is strictly convex, \cite[Theorem
  11.6]{clt2015convex} also implies that $\Omega$ has $C^1$
  boundary. So there is a $\rho$-equivariant homeomorphism
  $\dee \Omega \to \dee \Omega^*$ assigning the point
  $z \in \dee \Omega$ to the unique supporting hyperplane of $\Omega$
  at $z$. We let $\dee \hat{\Omega}$ denote the space
  \[
    \{(x, w) \in \projflags(d) : x \in \dee \Omega, w \in \dee
    \Omega^*\}.
  \]
  
  There is an equivariant homeomorphism
  $\psi:\bgamh \to \dee \hat{\Omega}$, with the parabolic points in
  $\bgamh$ corresponding to the fixed flags of the cusp groups. The
  inverse map $\phi:\dee\hat{\Omega} \to \bgamh$ extends the
  convergence action of $\Gamma$ on $\bgamh$. For each parabolic point
  $p \in \bgamh$, the open set $C_p$ is
  \[
    \Opp(\psi(p)) = \{\xi \in \projflags : \xi \textrm{ is opposite to
    } \psi(p)\}.
  \]

  Let $\pi:\projflags(d) \to \P(\R^d)$ be the canonical projection
  map. It suffices to show that for any compact set
  $K \subset \pi(C_p)$, any open neighborhood $U$ of $\pi(\psi(p))$ in
  $\P(\R^d)$, and any cofinite subset
  $F \subset \Gamma_p = \Stab_\Gamma(p)$ such that
  \[
    \rho(F) \cdot K \subset U,
  \]
  we can find a neighborhood
  $\mc{W} \subset \Homvf(\Gamma_p, \PGL(d, \R), \mc{H})$ containing
  $\rho$ such that
  \[
    \rho'(F) \cdot K \subset U
  \]
  for any $\rho' \in \mc{W}$. However, this is exactly the content of
  \Cref{lem:convex_hypersurface_deformation}. The same argument
  applied dually then shows that we can upgrade $K$ to a compact
  subset of $C_p \subset \projflags$ and $U$ to an open subset in
  $\projflags(d)$, giving the required peripheral stability.
\end{proof}


\section{Symmetric representations}
\label{sec:symmetric_powers.tex}

\label{sec:symmetric_powers}

In this section, we construct new examples of extended geometrically
finite representations by taking symmetric powers of convex cocompact
representations of groups which are hyperbolic relative to virtually
abelian subgroups. We also prove \Cref{thm:symmetric_powers_stable},
which states that these EGF representations are \emph{absolutely}
stable in the representation variety.

\subsection{Symmetric powers}

Let $V$ be a finite-dimensional real vector space. We let $\tau_m$
denote the symmetric representation
\[
  \SL(V) \to \SL(\Sym^m(V)).
\]
Throughout this section, we will view $\Sym^m(V)$ as a quotient of the
space of homogeneous degree-$m$ polynomials in elements of $V$. We
will always leave this quotient implicit. That is, if
$v_1, \ldots, v_k \in V$, and $r_1, \ldots, r_k \in \N \cup \{0\}$
with $\sum r_i = m$, we will view the monomial
$v_1^{r_1}\cdots v_k^{r_k}$ as an element of $\Sym^m(V)$.

There is a $\tau_m$-equivariant embedding
\[
  \iota:\P(V) \to \P(\Sym^m(V))
\]
given by $[v] \mapsto [v^m]$. There is also a corresponding dual
embedding
\[
  \iota^*:\P(V^*) \to \P(\Sym^m(V)^*),
\]
using the canonical identification $\Sym^m(V^*) \simeq \Sym^m(V)^*$.
We observe that $v \in \P(V)$ and $w \in \P(V^*)$ are transverse if
and only if their respective images under $\iota$ and $\iota^*$ are
also transverse. This means that the maps $\iota$ and $\iota^*$ also
give rise to a $\tau_m$-equivariant map
\[
  \hat{\iota}:\projflags(V) \to \projflags(\Sym^m(V))
\]
given by $\hat{\iota}(v, w) = (\iota(v), \iota^*(w))$.

\subsubsection{Dynamics in symmetric powers}

The dynamics of $1$-divergent sequences in $\SL(V)$ on $\P(V)$ and
$\P(V^*)$ are respected in $\P(\Sym^mV)$, in sense given by the
following (easily verified) proposition:
\begin{prop}
  \label{prop:symmetric_power_dynamics}
  Let $\{g_n\}$ be an infinite sequence in $\SL(V)$ such that for some
  $w \in \P(V^*)$, $x \in \P(V)$, we have $g_n|_{\Opp(w)} \to x$
  uniformly on compacts.
  
  Then
  \[
    \tau_m(g_n)|_{\Opp(\iota^*(w))} \to \iota(x)
  \]
  uniformly on compacts.
\end{prop}

\subsection{Symmetric powers of relatively hyperbolic convex cocompact
  groups}

Suppose that $\Gamma$ is a convex cocompact subgroup of $\PGL(V)$, so
that there is a properly convex domain $\Omega \subset \P(V)$ with
$\Gamma$ acting convex cocompactly on $\Omega$. We may replace
$\Gamma$ with a finite index-subgroup and lift $\Gamma$ to a
representation $\rho:\Gamma \to \SL(V)$. In this situation we say that
the representation $\rho:\Gamma \to \SL(V)$ is \emph{convex cocompact
  in $\P(V)$}.

Let $(\Gamma, \mc{H})$ be a relatively hyperbolic pair such that each
group in $\mc{H}$ is virtually abelian, and let
$\rho:\Gamma \to \SL(V)$ be a convex cocompact
representation. Representations of this form have been studied
extensively by Islam-Zimmer \cite{iz2019convex}, \cite{iz2019flat},
who proved a number of strong structural results. In particular,
Islam-Zimmer showed that in this situation, for each $H \in \mc{H}$,
the image $\rho(H)$ acts cocompactly on a properly embedded
$k$-simplex $\Delta_H \subset \Omega$, with $k = \mathrm{rank}(H)$. (A
simplex $\Delta \subset \Omega$ is \emph{properly embedded} if
$\dee \Delta \subset \dee \Omega$.) In fact, the action of $\rho(H)$
on $\Omega$ is convex cocompact, and the full orbital limit set of
$\rho(H)$ in $\Omega$ is $\dee \Delta_H$.

Conversely, every properly embedded maximal $k$-simplex in the convex
hull of $\Lambda_\Omega(\Gamma)$ always has a cocompact action by some
$H \in \mc{H}$ with rank $k$.

We let
\[
  \rho^m: \Gamma \to \SL(\Sym^mV)
\]
denote the composition $\tau_m \circ \rho$. We have two main goals in
this section. The first is to prove that the representation $\rho^m$
is always EGF. This result does \emph{not} follow directly from the
fact that convex cocompact representations in $\P(V)$ are EGF
(\Cref{thm:convex_cocompact_egf}), because we do not know that the
representations $\rho^m$ are convex cocompact in $\P(\Sym^mV)$. In
fact, Jeff Danciger and Fanny Kassel have indicated in personal
communication to the author that $\rho^m$ should \emph{never} be
convex cocompact in $\P(\Sym^mV)$ unless the collection $\mc{H}$ is
empty: while $\rho^m(\Gamma)$ does preserve a properly convex domain
in $\P(\Sym^mV)$, the convex hull of the full orbital limit set in any
such domain seems ``too big'' for $\rho^m(\Gamma)$ to act cocompactly.

Our second goal is to show that the \emph{entire} space
$\Hom(\Gamma, \SL(\Sym^mV))$ is peripherally stable about $\rho^m$,
meaning (by \Cref{thm:stability_theorem}) an open neighborhood of
$\rho^m$ consists of EGF representations. In particular this shows
that small perturbations of $\rho^m$ still have finite kernel and
discrete image, giving new examples of discrete subgroups of
higher-rank Lie groups which are stable (as discrete groups).

\subsubsection{Proof strategy}

To show that $\rho^m$ is EGF, we will give an explicit description of
the boundary set $\hat{\Lambda}_m \subset \symflags(\Sym^mV)$. The
naive choice is to just take $\hat{\Lambda}_m$ to be
$\hat{\iota}(\hat{\Lambda}_\Omega(\Gamma))$, where
$\hat{\Lambda}_\Omega(\Gamma)$ is the flag-valued full orbital limit
set giving the EGF boundary set for $\rho:\Gamma \to \SL(V)$ (see
\Cref{sec:convex_cocompact}). While there is a equivariant surjective
map from this set to $\bgamh$, it turns out that we will have to
\emph{enlarge} it in order to ensure that the relevant dynamics hold.

The idea is the following: for each parabolic point $p \in \bgamh$,
with stabilizer $H$, we take the fiber over $p$ in $\hat{\Lambda}_m$
to be the space of flags in the boundary of a simplex
$S_H \subset \P(\Sym^mV)$, constructed using the simplex
$\Delta_H \subset \Omega$ on which $H = \Stab_\Gamma(p)$ acts
cocompactly. The simplex $S_H$ is chosen so that if $\gamma_n$ is a
sequence in $\Gamma$ converging to $p$, then a face of $S_H$ spans a
minimal attracting subspace of $\rho^m(\gamma_n)$. (Here, and
throughout this section, \emph{attracting} and \emph{repelling}
subspaces are understood in the sense defined in
\Cref{sec:flag_dynamics}).

We also want to ensure that the simplex $S_H$ is \emph{stable},
i.e. if $\rho^m_t$ is a small deformation of $\rho^m$ in
$\Hom(\Gamma, \SL(\Sym^mV))$, then $\rho_t^m(H)$ preserves a simplex
$S_H^t$ close to $S_H$. We verify that $S_H$ has these properties by
analyzing the relationship between the weights of $\rho$ and $\rho^m$
on the virtually abelian group $H$.

The other main steps in the proof involve checking that the boundary
set $\hat{\Lambda}_m$ we construct is actually a compact space
surjecting continuously onto $\bgamh$, and constructing the open sets
$\hat{C}_p$ also required by the definition of an EGF
representation. For the latter, we make heavy use of the fact that the
\emph{dual} action of $\rho(\Gamma)$ on $\P(V^*)$ is also projectively
convex cocompact, which allows us to construct a stable \emph{dual}
simplex $S_H^*$ for each $H \in \mc{H}$. The vertices of $S_H^*$ are
thought of as hyperplanes in $\P(\Sym^m(V))$, cutting out a region
$C_p$ of $\P(\Sym^m(V))$ on which $\rho^m(H)$ attracts points towards
$S_H$.

\subsubsection{Example: symmetric squares of convex projective
  3-manifold groups}

We illustrate the general idea of our approach with a specific
example. Let $\Omega \subset \P(\R^4)$ be a properly convex domain,
and let $\Gamma \subseteq \Aut(\Omega)$ be a discrete group acting
cocompactly on $\Omega$. In \cite{benoist2006convexes}, Benoist
produced examples of such groups which are hyperbolic relative to a
nonempty collection $\mc{H}$ of virtually abelian subgroups of rank
2. Further examples were constructed by Ballas-Danciger-Lee in
\cite{bdl2015convex}. Up to finite index, each $H \in \mc{H}$ acts
diagonalizably on $\P(\R^4)$, preserving a projective tetrahedron
$T_H \subset \P(\R^4)$ and acting cocompactly on a properly embedded
triangle $\Delta_H \subset \Omega$. Each edge of $\Delta_H$ is
contained in a unique supporting hyperplane of $\Omega$. The common
intersection of these hyperplanes is the fourth vertex of $T_H$.

More explicitly, up to finite index, each $H$ acts diagonally on
$\R^4$ in the basis $\{v_1, v_2, v_3, v_4\}$, where the $v_i$ are the
vertices of $T_H$, and $v_1, v_2, v_3$ are the vertices of
$\Delta_H$. We can consider the situation where (in this basis) $H$ is
the discrete group
\[
  \left\{
    \begin{psmallmatrix}
      2^a\\& 2^b\\ & & 2^c\\ & & & 1
    \end{psmallmatrix} : a,b,c \in \Z, \,a + b + c = 0 \right\}.
\]

The dual of $H$ preserves the corresponding dual basis
$\{v_1^*, v_2^*, v_3^*, v_4^*\}$, and acts cocompactly on a projective
triangle $\Delta_H^* \subset \P((\R^4)^*)$ with vertices
$v_1^*, v_2^*, v_3^*$. The kernels $\hyp{v_i^*}$ for $i = 1,2,3$ give
three supporting hyperplanes of $\Omega$ which cut out a region $R_H$
of projective space containing $\Omega$. In fact, $R_H$ also contains
$\dee \Omega - \dee\Delta_H$.

Now let $\rho^2:\Gamma \to \SL(\Sym^2(\R^4)) \simeq \SL(10, \R)$ be
the composition of the inclusion $\Gamma \hookrightarrow \SL(4, \R)$
with the symmetric square $\tau_2:\SL(4, \R) \to
\SL(\Sym^2(\R^4))$. In this case, the induced map
$\iota:\P(\R^4) \to \P(\R^{10})$ is the Veronese embedding.

For each $H \in \mc{H}$, $\rho^2(H)$ preserves a 9-simplex in
$\P(\R^{10})$, with vertices
\[
  \{v_1^2, v_2^2, v_3^2, v_4^2, v_1v_2, v_1v_3, v_1v_4, v_2v_3,
  v_2v_4, v_3v_4\}.
\]

In particular $\rho^2(H)$ also preserves the 5-simplex $S_H$ with
vertices
\[
  \{v_1^2, v_2^2, v_3^2, v_1v_2, v_2v_3, v_1v_3\}.
\]
Any divergent sequence in $\rho^2(H)$ always has an attracting
subspace spanned by a face of $S_H$, since the eigenvalues of elements
of $H$ on $v_4$ are always dominated by some eigenvalue of that
element on either $v_1, v_2$, or $v_3$. For instance, if we consider
the sequence
\[
  a_n =
  \begin{psmallmatrix}
    2^{2n}\\ & 2^{-n} \\ & & 2^{-n} \\ & & & 1
  \end{psmallmatrix},
\]
then $\rho^2(a_n)$ attracts towards the subspace spanned by
$\{v_1^2\}$. On the other hand, if $a_n$ is the sequence
\[
  \begin{psmallmatrix}
    2^n\\ & 2^n \\ & & 2^{-2n} \\ & & & 1
  \end{psmallmatrix},
\]
then $\rho^2(a_n)$ attracts towards the subspace spanned by
$\{v_1^2, v_2^2, v_1v_2\}$.

Moreover, the subspaces spanned by faces of $S_H$ are transverse to
$\iota(\dee \Omega - \dee \Delta_H)$, so large elements of $H$ attract
points in $\iota(\dee \Omega)$ that are ``far from'' $\dee S_H$
towards $\dee S_H$. In fact, this dynamical behavior extends to an
entire open subset of $\P(\R^{10})$: namely, a region cut out by the
hyperplanes corresponding to the vertices of the dual 5-simplex
$S_H^* \subset \P((\R^{10})^*)$ with vertices
\[
  \{(v_1^*)^2, (v_2^*)^2, (v_3^*)^2, v_1^*v_2^*, v_2^*v_3^*,
  v_1^*v_3^*\}.
\]
So, the simplex $S_H$ serves as the ``parabolic point'' for the action
of the peripheral subgroup $H$ on $P(\R^{10})$---and moreover, this
behavior is stable under perturbations of $\rho^2(H)$ in
$\Hom(\Gamma, \SL(\Sym^2V))$. This means that we can construct our
candidate boundary set for the representation $\rho^m$ by taking
\[
  \hat{\iota}(\hat{\Lambda}_\Omega(\Gamma)) \cup \bigcup_{H \in
    \mc{H}} \dee\hat{S}_H,
\]
where $\dee\hat{S}_H \subset \projflags(\Sym^2\R^4)$ is a closed
subset of the space of flags projecting to the boundary of the simplex
$S_H$.

\subsection{Generalized weight spaces}

To carry out the general construction of the simplex $S_H$ identified
in the previous example, we need some description of \emph{attracting
  subspaces} for the groups $\rho^m(H) \subset \SL(\Sym^mV)$. We
obtain this description by recalling some of the properties of
\emph{weights} of representations of free abelian groups.

\begin{definition}
  Let $\rho:H \to \GL(V)$ be a representation of a free abelian group
  $H$, and let $\rho_\C:H \to \GL(V \ten_\R \C)$ be the
  complexification of $\rho$.

  A \emph{complex weight} of $\rho$ is a homomorphism
  $\mu_\C : H \to \C$ such that the \emph{weight space}
  \[
    V_{\mu_\C} = \ker(\rho_\C(h) - \exp(\mu_\C(h))I)
  \]
  is nontrivial for evey $h \in H$. A \emph{generalized complex
    weight} is similarly a homomorphism $\mu_\C:H \to \C$ such that
  the \emph{generalized weight space}
  \[
    V_{\mu_\C} = \bigcup_{n=1}^{\dim V} \ker(\rho_\C(h) -
    \exp(\mu_\C(h))I)^n
  \]
  is nontrivial.

  For any generalized weight $\mu_{\C}$, the \emph{nilpotence degree}
  of $\mu_\C$ is the minimal $\ell \in \N$ such that
  $V_{\mu_\C} = \ker(\rho_\C(h) - \exp(\mu_\C(h))I)^\ell$.
\end{definition}

Given a representation $\rho:H \to \GL(V)$, the generalized complex
weight spaces of $\rho_\C$ give a $\rho_\C$-invariant decomposition of
$V \ten_\R \C$. This in turn gives a $\rho$-invariant decomposition of
$V$, since when $\mu_\C$ is a weight which takes on complex values,
the direct sum $V_{\mu_\C} \oplus V_{\overline{\mu_\C}}$ is a
$\rho$-invariant real subspace of $V$. By a slight abuse of
terminology we refer to this as the \emph{generalized weight space
  decomposition} for the representation $\rho$. The associated
\emph{real weights} of the representation are homomorphisms
$\mu:H \to \R$ of the form $\log |\exp \mu_\C |$, where $\mu_\C$ is a
(generalized) complex weight. For the rest of the section, unless
otherwise indicated, when we refer to (generalized) \emph{weights} of
a representation into $\SL(V)$, we will mean the (generalized)
\emph{real} weights, and similarly for weight spaces.

Generalized weight spaces of $\rho$ are stable under deformations of
$\rho$. To be precise, we observe the following:
\begin{prop}
  \label{prop:weights_deform_continuously}
  Let $\rho:\Gamma \to \GL(V)$ be a representation of a free abelian
  group, and let
  \[
    V_{\mu_1}^0 \oplus \ldots \oplus V_{\mu_s}^0
  \]
  be the generalized weight space decomposition of $V$ for $\rho$. Let
  $\rho_t$ be a continuous family of representations in
  $\Hom(\Gamma, \GL(V))$, with $\rho = \rho_0$.

  For all sufficiently small $t$, there is a $\rho_t$-invariant
  decomposition
  \[
    V_1^t \oplus \ldots \oplus V_s^t
  \]
  such that $V_i^t$ is a sum of generalized weight spaces for
  $\rho_t$, with $V_i^t$ varying continuously with $t$, and each of
  the weights associated to $V_i^t$ also varying continuously with
  $t$.
\end{prop}
\begin{proof}
  The weights vary continously as a set with multiplicity, because the
  roots of the characteristic polynomial of $\rho(\gamma)$ vary
  continuously in $\rho$ for fixed $\gamma \in \Gamma$. And, if $\mu$
  is a complex weight with multiplicity $k$, then for small $t$ there
  are complex weights $\mu_1^t, \ldots, \mu_k^t$ of $\rho_t$ (possibly
  with repeats) close to $\mu$ such that the sum of the kernels
  $\ker(\rho_\C(\gamma) - \exp(\mu_i^t(\gamma))I)^{\dim V}$ is close to
  $\ker(\rho_\C(\gamma) - \exp(\mu(\gamma))I)^{\dim V}$.
\end{proof}

\begin{definition}
  Let $\rho:H \to \SL(V)$ be a representation of a free abelian group
  $H$, and let $\Phi$ be the set of generalized weights of $\rho$. For
  any subset $\theta \subseteq \Phi$, we let $V_\theta \subseteq V$
  denote the span of the generalized weight spaces $V_\mu$ for
  $\mu \in \theta$, and we let $V_\theta^\opp \subseteq V$ denote the
  span of the generalized weight spaces $V_{\mu'}$ for
  $\mu' \in \Phi - \theta$.
\end{definition}

\subsubsection{Faces in the convex hull of the weights}

Whenever $\rho:H \to \SL(V)$ is a representation of a free abelian
group with rank $k$, we can extend any real (generalized) weight
$\mu:H \to \R$ to a homomorphism $\mu : H \ten_\Z \R \to \R$, and view
it as an element of $(\R^k)^* \simeq \R^k$.

\begin{definition}
  Let $\rho:H \to \SL(V)$ be a representation of a free abelian group,
  and let $\Phi$ be the set of generalized weights of $\rho$. We
  denote the closed convex hull of $\Phi$ in
  $(H \ten_\Z \R)^* \simeq (\R^k)^*$ by $\mc{C}(\rho)$; since $\Phi$
  is a finite subset of $(\R^k)^*$, $\mc{C}(\rho)$ is a convex
  polytope in $(\R^k)^*$.
\end{definition}

The convex polytope $\mc{C}(\rho)$ is important for our purposes
because it tells us how to find attracting subspaces for $\rho(H)$. In
particular, there is a correspondence between the \emph{faces} of
$\mc{C}(\rho)$ and attracting subspaces of $\rho(H)$.
\begin{definition}
  Let $\rho:H \to \SL(V)$ be a represention of a free abelian group
  with generalized weight set $\Phi$. Let $F$ be a closed face of
  $\mc{C}(\rho)$. We let $\Phi(F)$ denote the set of generalized
  weights of $\rho$ lying in $F$.

  For a face $F$ of $\mc{C}(\rho)$, we write $V_F$ and $V_F^\opp$ for
  $V_{\Phi(F)}$ and $V_{\Phi(F)}^\opp$, respectively.
\end{definition}

Below, we will prove the following:

\begin{prop}
  \label{prop:boundary_weights_determine_dynamics}
  Let $\rho:H \to \SL(V)$ be a representation of a free abelian
  group. For any divergent sequence $h_n \in H$, there is a face $F$
  of $\mc{C}(\rho)$ such that $V_F$ and $V_F^\opp$ are respectively
  attracting and repelling subspaces for $\rho(h_n)$.

  Conversely, for any face $F$ of $\mc{C}(\rho)$, $V_F$ is an
  attracting subspace for some sequence $\rho(h_n)$ with $h_n \in H$
  divergent.
\end{prop}

To prove \Cref{prop:boundary_weights_determine_dynamics}, we first
establish some estimates which will later help us show that the
convergence to the spaces $V_F$ is both uniform and stable. To help
make our estimates explicit, we choose a norm $|\cdot|$ on
$H \ten_\Z \R \simeq \R^k$. We also fix a norm $||\cdot||$ on $\C^d$,
which induces an operator norm $||\cdot||$ on $\SL(d, \C)$. We use
$\mathbf{m}(\cdot)$ to denote the conorm
$\mathbf{m}(g) = ||g^{-1}||^{-1}$.
\begin{lem}
  \label{lem:upper_triangular_norm_bound}
  Let $U(d, \C)$ be the group of upper-triangular matrices in
  $\SL(d, \C)$, and let $H$ be a free abelian group. For any
  $\rho \in \Hom(H, U(d, \C))$, there exists $D(\rho) > 0$ (varying
  continuously with $\rho$) so that for any $h \in H$, we have
  \[
    \frac{1}{D}|h|^{1-d} \cdot r^-_\rho(h) \le \mathbf{m}(\rho(h)) \le
    ||\rho(h)|| \le D|h|^{d-1} \cdot r^+_\rho(h),
  \]
  where $r^+_\rho(h)$ and $r^-_\rho(h)$ are respectively the maximum
  and minimum modulus of any eigenvalue of $\rho(h)$.
\end{lem}
\begin{proof}
  We can extend each $\rho \in \Hom(H, U(d, \C))$ to a representation
  $\rho:H \ten_\Z \R \to U(d, \C)$.  For each $h \in H$, we write
  \[
    \rho(h) = S^\rho(h) + N^\rho(h),
  \]
  where $S^\rho(h)$ is diagonal and $N^\rho(h)$ is strictly
  upper-triangular. The entries of $S^\rho(h)$ are the eigenvalues of
  $\rho(h)$, which means $||S^\rho(h)|| = r^+_\rho(h)$ and
  $||S^\rho(-h)||^{-1} = r^-_\rho(h)$.

  For any fixed $h \in H \ten_\Z \R$ and $k \in \N$, if we binomially
  expand the expression
  \begin{align*}
    \rho(h)^k &= (S^\rho(h) + N^\rho(h))^k,
  \end{align*}
  then all monomials containing at least $d$ terms equal to
  $N^\rho(h)$ must vanish. This gives us an inequality of the form
  \begin{equation}
    \label{eq:norm_upperbound}
    ||\rho(h)^k|| \le ||S^\rho(h)||^k \cdot p(k, ||N^\rho(h)||),
  \end{equation}
  where $p(x, y)$ is a polynomial with nonnegative coefficients
  depending continuously on $\rho$ and $h$, with degree at most
  $d - 1$. Using the identity
  $\mathbf{m}(\rho(h)^k) = ||\rho(-h)^k||^{-1}$, we also see that
  \begin{equation}
    \label{eq:norm_lowerbound}
    \mathbf{m}(\rho(h)^k) \ge ||S^\rho(-h)||^{-k} / p(k,
    ||N^\rho(-h)||).
  \end{equation}
  Now, since the unit sphere in $H \ten_\Z \R$ is compact, and
  $N^\rho(h)$ varies continuously with $\rho$ and $h$, we can find
  $D_0$ varying continuously with $\rho$ so that
  $||N^\rho(s)|| \le D_0$ for all $s$ with $|s| = 1$. Now suppose that
  $h = ks$ for $k \in \N$ and $s \in H \ten_\Z \R$ with $|s| =
  1$. Then $k = |h|$ and from \eqref{eq:norm_upperbound} we see
  \[
    ||\rho(h)|| = ||\rho(s)^k|| \le r^+_\rho(s)^k \cdot p(|h|, ||N^\rho(s)||) =
    r^+_\rho(h) \cdot p(|h|, ||N^\rho(s)||).
  \]
  Since $0 \le ||N^\rho(s)|| < D_0$, we can bound the polynomial term
  beneath $D|h|^{d-1}$ for a constant $D$ depending continuously on
  $\rho$, giving us the desired upper bound. For the lower bound, we
  can argue similarly using \eqref{eq:norm_lowerbound}. A priori,
  these bounds only holds for $h \in H \ten_\Z \R$ which are positive
  integer multiples of elements on the unit sphere, but every
  $h \in H$ is bounded distance from such an element, so we get the
  desired bounds everywhere.
\end{proof}

For the next estimate, we choose an inner product on our real vector
space $V$, which induces a norm $|| \cdot ||$ on $V$ and a smooth
metric $d_\P$ on $\P(V)$. Specifically, for any transverse subspaces
$W, W' \subset V$, we define
\[
  \angle(W, W') = \inf_{\substack{w \in W - \{0\},\\w' \in W' -
      \{0\}}} \angle(w, w'),
\]
and then take $d_\P([u], [v]) = \sin(\angle([u],[v]))$. As before, the
norm $||\cdot||$ induces an operator norm and conorm
$\mathbf{m}(\cdot)$ on $\SL(V)$.
\begin{lem}
  \label{lem:norm_attract_estimate}
  Let $W, W^\perp$ be transverse subspaces of $V$ with
  $W \oplus W^\perp = V$. If $g \in \SL(V)$ preserves both $W$ and
  $W^\perp$, then for any $x \in \P(V) - \P(W^\perp)$, we have
  \[
    \frac{d_\P(g \cdot x, \P(W))}{d_\P(g \cdot x, \P(W^\perp))} \le
    \frac{1}{\sin^2 \angle(W,
      W^\perp)}\frac{||g|_{W^\perp}||}{\mathbf{m}(g|_W)} \cdot d(x,
    \P(W^\perp))^{-1}.
  \]
\end{lem}
\begin{proof}
  Let $x = [v]$ for $v \in V$, and uniquely write $v = w + w^\perp$
  for $w \in W$, $w^\perp \in W^\perp$, with $w^\perp \ne 0$. Then we
  have
  \[
    \frac{||w^\perp||}{||w||}\sin \angle(W, W^\perp) \le d_\P(x,
    \P(W)) \le \frac{||w^\perp||}{||w||},
  \]
  and similarly
  \[
    \frac{||w||}{||w^\perp||}\sin \angle(W, W^\perp) \le d_\P(x,
    \P(W^\perp)) \le \frac{||w||}{||w^\perp||}.
  \]
  So in particular we have
  \begin{equation}
    \label{eq:subspace_dist_estimate}
    \sin \angle(W, W^\perp) \frac{||w^\perp||}{||w||} \le
    \frac{d_\P(x, \P(W))}{d_\P(x, \P(W^\perp))} \le \frac{1}{\sin
      \angle(W, W^\perp)} \frac{||w^\perp||}{||w||}.
  \end{equation}
  Since $g$ preserves the decomposition $V = W \oplus W^\perp$, we see
  that
  \[
    \frac{d_\P(g \cdot x, \P(W))}{d_\P(g \cdot x, \P(W^\perp))} \le
    \frac{1}{\sin \angle(W, W^\perp)} \frac{||g \cdot w^\perp||}{||g
      \cdot w||}.
  \]
  We know that $||g \cdot w^\perp|| \le ||g|_{W^\perp}|| \cdot
  ||w^\perp||$ and $||g \cdot w|| \ge \mathbf{m}(g|_W) \cdot ||w||$,
  so we get the inequality
  \[
    \frac{d_\P(g \cdot x, \P(W))}{d_\P(g \cdot x, \P(W^\perp))} \le
    \frac{1}{\sin \angle(W, W^\perp)}
    \frac{||g|_{W^\perp}||}{\mathbf{m}(g|_W)} \cdot
    \frac{||w^\perp||}{||w||}.
  \]
  Then we apply the left-hand inequality of
  \eqref{eq:subspace_dist_estimate} and use the fact that
  $d_\P(x, \P(W)) \le 1$ to finish the proof.
\end{proof}

\begin{proof}[Proof of
  \Cref{prop:boundary_weights_determine_dynamics}]
  Let $\rho:H \to \SL(V)$ be a representation of a free abelian group,
  let $\Phi$ be the set of generalized weights, and let $h_n$ be a
  divergent sequence in $H$. Up to subsequence, the sequence
  $h_n / |h_n|$ converges to some $h_\infty \in H \ten_\Z \R$ with
  $|h_\infty| = 1$.

  We can view $h_\infty$ as a linear functional on the space
  $(H \ten_\Z \R)^*$. Since $\mc{C}(\rho) \subset (H \ten_\Z \R)^*$ is
  a convex polytope, this means there is a face $F$ of $\mc{C}$ so
  that for any $\mu \in \Phi(F)$ and any
  $\mu^\opp \in \Phi - \Phi(F)$, we have
  $\mu(h_\infty) > \mu^\opp(h_\infty)$. Then for sufficiently large
  $n$ we also have $\mu(h_n/|h_n|) > \mu^\opp(h_n/|h_n|)$. In fact,
  since $\Phi$ is finite, there is a constant $M > 0$ such that
  $\mu(h_n) - \mu^\opp(h_n) > M|h_n|$ for every $\mu \in \Phi(F)$ and
  every $\mu^\opp \in \Phi - \Phi(F)$. In particular, if $r(h_n)$ is
  the eigenvalue of $\rho(h_n)$ on $V_F$ with smallest modulus, and
  $r^\opp(h_n)$ is the eigenvalue of $\rho(h_n)$ on $V_F^\opp$ with
  largest modulus, we have $|r(h_n)| / |r^\opp(h_n)| > \exp(M|h_n|)$.

  We can choose an identification of $V \ten_\R \C$ with $\C^{\dim V}$
  so that the complexification
  $\rho_\C:H \ten_\Z \R \to \SL(V \ten_\R \C)$ lies in the group of
  upper-triangular matrices, and the eigenvectors of $\rho_\C$ are
  standard basis vectors. The norm $||\cdot||$ we have chosen on $V$
  induces a norm on $V \ten_\R \C$ which agrees with the standard norm
  on $\C^{\dim V}$ up to bounded multiplicative error. So, we can
  apply \Cref{lem:upper_triangular_norm_bound} to see that the
  quantity
  \[
    \frac{\mathbf{m}(\rho(h_n)|_{V_F})}{||\rho(h_n)|_{V_F^\opp}||}
  \]
  tends to infinity as $n \to \infty$. Then
  \Cref{lem:norm_attract_estimate} implies that for any
  $x \in \P(V) - \P(V_F^\opp)$, the distance
  \[
    d_\P(\rho(h_n)x, \P(V_F)) \le \frac{d_\P(\rho(h_n)x,
      \P(V_F))}{d_\P(\rho(h_n)x, \P(V_F^\opp))}
  \]
  tends to $0$ as $n \to \infty$, so $V_F$ and $V_F^\opp$ must
  respectively be attracting and repelling subspaces for $\rho(h_n)$.

  Conversely, if $F$ is any face of $\mc{C}(\rho)$, we can choose
  $h \in H \ten_\Z \R$ so that $\mu(h) > 0$ and $\mu(h) > \mu^\opp(h)$
  for any $\mu \in \Phi(F)$ and $\mu^\opp \in \Phi - \Phi(F)$. Then if
  $h_n \in H$ is any divergent sequence with $h_n / |h_n| \to h$ in
  $H \ten_\Z \R$, we can similarly apply
  \Cref{lem:upper_triangular_norm_bound} to see that the ratio
  $||\rho(h_n)|_{V_F}|| / ||\rho(h_n)|_{V_F^\opp}||$ tends to
  infinity, and again use \Cref{lem:norm_attract_estimate} to see that
  $V_F$ is an attracting subspace for $h_n$.
\end{proof}

\subsection{Weights of peripheral subgroups in convex cocompact
  groups}

For the rest of this section, we fix a relatively hyperbolic pair
$(\Gamma, \mc{H})$, where each $H \in \mc{H}$ is virtually abelian
with rank at least 2. We also fix a representation
$\rho:\Gamma \to \SL(V)$ which is convex cocompact in $\P(V)$, and let
$\Omega \subset \P(V)$ be a properly convex domain where
$\rho(\Gamma)$ acts convex cocompactly.

Our goal now is to describe the convex polytope in $(H \ten_\Z \R)^*$
associated to the restriction of $\rho$ to each $H \in \mc{H}$, which
we can use to understand the dynamics of both $\rho(H)$ and
$\rho^m(H)$.

\begin{definition}
  For each $H \in \mc{H}$, we let $\verts{H} \subset \P(V)$ denote the
  set of vertices of $\Delta_H$.
\end{definition}

\begin{prop}
  \label{prop:vertices_weight_spaces}
  Let $H \in \mc{H}$ be a peripheral subgroup of rank $k \ge 2$, and
  let $H_0 \subseteq H$ be a finite-index free abelian
  subgroup. Consider the restriction $\rho_0 = \rho|_{H_0}$. Then, the
  convex polytope $\mc{C}(\rho_0)$ is a $k$-simplex in
  $(H_0 \ten_\Z \R)^*$, and each vertex of $\mc{C}(\rho_0)$ is a
  weight $\mu$ whose associated weight space is a vertex of
  $\Delta_H$.

  Moreover, every weight of $\rho_0$ which is not a vertex of
  $\mc{C}(\rho_0)$ lies in the interior of $\mc{C}(\rho_0)$.
\end{prop}
\begin{proof}
  Each vertex $v \in \verts{H}$ lies in a weight space of $\rho_0$,
  with an associated weight $\mu_v$. Let $\Phi$ denote the weights of
  $\rho_0$, and let $\Phi(\verts{H}) \subseteq \Phi$ be the set of
  weights of the form $\mu_v$ for $v \in \verts{H}$. We claim that for
  any $\mu \in \Phi - \Phi(\verts{H})$ and any $h \in H_0 \ten_\Z \R$,
  there is a vertex $v \in \verts{H}$ such that
  \[
    \mu_v(h) > \mu(h).
  \]
  Suppose for a contradiction that the claim does not hold, i.e. there
  exists $h \in H_0 \ten_\Z \R$ and $\mu \in \Phi - \Phi(\verts{H})$
  such that $\mu(h) \ge \mu_v(h)$ for all $v \in \verts{H}$. We choose
  a subspace $V_\mu'$ of the weight space $V_\mu$, so that the
  restriction of $\rho_0$ to $V_\mu'$ is (complex)
  diagonalizable. Then we let $W_H = \spn(\Delta_H)$, and let $V'$ be
  the $\rho_0$-invariant subspace $V_\mu' \oplus W_H$. Then $\rho_0$
  induces a representation $\rho_0':H_0 \to \SL(V')$. Since each
  vertex in $\mc{V}_H$ is an eigenspace for $\rho_0$, this
  representation is (complex) diagonalizable.

  We may choose our norm on $V'$ so that the eigenspaces for $\rho_0'$
  are pairwise orthogonal. Then, for any $h \in H_0$, the norm of
  $\rho_0'(h)$ restricted to any weight space is given by the modulus
  of the corresponding weight. So we have
  \[
    ||\rho_0(h)|_{W_H}|| \le \mathbf{m}(\rho_0(h)|_{V_\mu'}).
  \]
  Now, if $h_n$ is any divergent sequence in $H$ with
  $h_n/|h_n| \to h$, \Cref{lem:norm_attract_estimate} implies that for
  any $x \in \P(V') - \P(W_H)$, the ratio
  \[
    \frac{d_\P(\rho(h_n)x, \P(V_\mu))}{d_\P(\rho(h_n)x, \P(W_H))}
  \]
  does not tend to infinity as $n \to \infty$. In particular this is
  true for some $x \in \Omega$, since $\P(V') \cap \Omega$ is
  relatively open and nonempty. But since $\Delta_H \subset \P(W_H)$,
  this contradicts the fact that $\dee \Delta_H$ is the full orbital
  limit set of $\rho(H_0)$ in $\Omega$.

  We have now proved our claim, which implies that any extreme point
  of the convex polytope $\mc{C}(\rho_0)$ is a weight $\mu_v$ for
  $v \in \verts{H}$. On the other hand, by
  \Cref{cor:segments_in_peripherals}, we may assume that each vertex
  $v \in \verts{H}$ is an extreme point in $\dee \Omega$, and by
  \Cref{prop:attracting_spaces_in_faces}, this means that for each
  $v \in \verts{H}$, there is a sequence $h_n \in H_0$ such that $v$
  is an attracting subspace for $\rho_0(h_n)$. Since $\rho_0(H_0)$
  acts diagonalizably on $W_H$, this implies that $\mu_v$ is an
  extreme point of $\mc{C}(\rho_0)$. The last assertion of the
  proposition follows directly from the claim.
\end{proof}

\Cref{prop:vertices_weight_spaces} tells us that we can
combinatorially identify the $k$-simplex $\Delta_H$ and the
$k$-simplex $\mc{C}(\rho_0)$ for $\rho_0 = \rho|_{H_0}$. We write this
identification explicitly:
\begin{definition}
  Let $H \in \mc{H}$, and let $H_0$ be a finite-index free abelian
  subgroup. For each face $F$ of $\Delta_H$ with vertices $\mc{V}(F)$,
  we let $\rface{F}$ denote the face of $\mc{C}(\rho_0)$ whose
  vertices are the weights $\mu_v$ for $v \in \mc{V}(F)$.
\end{definition}

\subsection{Invariant simplices in the symmetric
  power}\label{subsec:symmetric_power_simplices}

Our next step is to describe the simplices $S_H \subset \P(\Sym^mV)$
which give rise to the fibers in $\hat{\Lambda}_m$ over parabolic
points, for our EGF boundary extension $\hat{\Lambda}_m \to \bgamh$.

Let $H \in \mc{H}$ have rank $k$, and let $H_0 \subseteq H$ be a
finite-index free abelian subgroup. We let $\rho_0, \rho_0^m$
respectively denote the restrictions of $\rho, \rho^m$ to $H_0$, and
let $\Phi, \Phi^m$ denote the sets of weights of $\rho_0$ and
$\rho_0^m$. We observe the following:
\begin{lem}
  \label{lem:symmetric_polytyope_simplex}
  The convex polytope $\mc{C}(\rho_0^m)$ is the $k$-simplex
  $m\mc{C}(\rho_0)$. Moreover, for every face $\rface{F}$ of
  $\mc{C}(\rho_0)$, the weights in $\Phi^m \cap m\rface{F}$ are
  exactly the vertices of the $m$th barycentric subdivision of
  $m\rface{F}$, and each such weight has a one-dimensional generalized
  weight space.
\end{lem}
\begin{proof}
  The weights of $\rho_0^m$ are exactly the set of homomorphisms of
  the form
  \[
    \sum_{\mu \in \Phi} a_\mu \mu,
  \]
  where $a_\mu \in \N \cup \{0\}$ and $\sum a_\mu = m$. In particular,
  the set of rescaled weights $\frac{1}{m}\Phi^m$ consists entirely of
  convex combinations of weights of $\rho_0$, and contains every
  weight in $\Phi$. This (together with
  \Cref{prop:vertices_weight_spaces}) implies that $\mc{C}(\rho_0^m)$
  is a $k$-simplex.

  Further, every (rescaled) weight in the boundary of the rescaled
  simplex $\frac{1}{m}\mc{C}(\rho_0^m)$ must be a convex combination
  of weights lying in a single face of the simplex
  $\mc{C}(\rho_0)$. But \Cref{prop:vertices_weight_spaces} says that
  every weight in $\Phi \cap \dee \mc{C}(\rho_0)$ is a vertex of
  $\mc{C}(\rho_0)$. So, if $F$ is a face of the simplex $\Delta_H$
  with vertices $\mc{V}(F)$, the weights in
  $\rface{F} \cap \frac{1}{m}\Phi^m$ are exactly the convex
  combinations of the form
  \begin{equation}
    \label{eq:convex_combination_weights}
    \frac{1}{m}\sum_{v \in \mc{V}(F)} a_v \mu_v,
  \end{equation}
  where $a_v \in \N \cup \{0\}$ and $\sum a_v = m$. These are exactly
  the vertices in the $m$th barycentric subdivision of $\rface{F}$,
  and in fact each such vertex has \emph{unique} expression of the
  form \eqref{eq:convex_combination_weights}. Since each weight
  $\mu_v$ for $v \in \mc{V}(F)$ has a one-dimensional generalized
  weight space, it follows that the weights in $\Phi^m \cap mF$ do as
  well.
\end{proof}

\subsubsection{The simplices $S_H \subset \P(\Sym^mV)$}

Using \Cref{lem:symmetric_polytyope_simplex}, we can define the
vertices of the simplex $S_H$: they are exactly the weight spaces for
the weights $\mu$ lying in $\Phi^m \cap \dee \mc{C}(\rho_0^m)$.

To define $S_H$ as a subset of $\P(\Sym^mV)$, we choose lifts in
$\Sym^mV$ of each vertex of $S_H$, and then take convex
combinations. Our lifts are chosen as follows: we first pick a lift
$\tilde{v} \in V$ of each vertex $v \in \verts{H}$, so that $\Delta_H$
is the projectivization of the convex hull in $V$ of
$\{\tilde{v} : v \in \verts{H}\}$. The weight space of $\mu$ for each
$\mu \in \dee \mc{C}(\rho_0^m)$ is spanned by a unique vector in
$\Sym^mV$ of the form
\[
  \tilde{v}_\mu = \prod_{v \in \verts{H}} \tilde{v}^{a_v},
\]
where $a_v \in \N \cup \{0\}$ and $\sum a_v = m$. Then we can define
$S_H$ to be the projectivization of the convex hull in $\Sym^mV$ of
the $\tilde{v}_\mu$'s.

\subsubsection{Dynamics on the simplices $S_H$}

By definition, the vertices of $S_H$ are exactly the weight spaces for
the weights in the boundary of the simplex
$\mc{C}(\rho_0^m) \subset (H_0 \ten_\Z \R)^*$. So,
\Cref{prop:boundary_weights_determine_dynamics} immediately implies
the following:
\begin{cor}
  \label{cor:symmetric_simplex_attracting_subspace}
  Let $H \in \mc{H}$. For every divergent sequence $h_n \in H$, there
  is a face $F$ of $S_H$ which spans an attracting subspace for the
  sequence $\rho^m(h_n)$.
\end{cor}

\subsection{Dual simplices in symmetric powers}

As discussed in \Cref{sec:convex_cocompact}, \cite[Proposition
5.6]{dgk2017convex} says that since $\rho:\Gamma \to \SL(V)$ is convex
cocompact in $\P(V)$, the dual representation $\Gamma \to \SL(V^*)$ is
convex cocompact in $\P(V^*)$, and in fact there is a domain
$\Omega \subset \P(V)$ so that $\Gamma$ acts convex cocompactly on
both $\Omega$ and the dual domain $\Omega^*$. By the work of
Islam-Zimmer \cite{iz2019convex}, each virtually abelian subgroup
$H \in \mc{H}$ must act cocompactly on a properly embedded \emph{dual}
simplex $\Delta_H^* \subset \Omega^*$. And, for each vertex $w$ of
$\Delta_H^*$, the projective hyperplane $\hyp{w}$ is a supporting
hyperplane of $\Omega$ at $\dee \Delta_H$.

\subsubsection{The simplices $S_H^* \subset \P(\Sym^mV^*)$}

For each $H \in \mc{H}$, we can define an $H$-invariant \emph{dual}
simplex $S_H^* \subset \P(\Sym^mV^*)$, by carrying out the
construction we used to find $S_H$ (but this time for the dual
representation $\rho^*:\Gamma \to \SL(V^*)$). We can describe the
relationship between the simplices $S_H$ and $S_H^*$ a little more
explicitly. For a finite-index free abelian subgroup
$H_0 \subseteq H$, we let $\rho_0^*:H_0 \to \SL(V^*)$ be the dual of
the restriction of $\rho$ to $H_0$, and similarly define
$(\rho_0^m)^*:H_0 \to \SL(V^*)$. Then the weights of $\rho_0^*$ are
the negative weights of $\rho_0$, and the weights of $(\rho_0^m)^*$
are the negative weights of $\rho_0^m$.

Suppose $\mu^m$ is a weight of $\rho_0^m$ with a one-dimensional
generalized weight space $v^m$. Then, the negative weight $-\mu^m$
also has a one-dimensional generalized weight space
$w^m \in \P(\Sym^mV^*)$, and $\hyp{w^m}$ is the hyperplane spanned by
the weight spaces of the weights in $\Phi^m - \{\mu^m\}$. In
particular, we can consider the case where $\mu^m$ is a weight lying
in the boundary of $\mc{C}(\rho_0^m)$. In this case, $v^m$ is a vertex
of $S_H$, $w^m$ is a vertex of $S_H^*$, and $\hyp{w^m}$ is a
hyperplane intersecting $S_H$ in a codimension-1 face of $S_H$.

This allows us to define a simultaneous lift of the \emph{boundaries}
of the simplices $S_H, S_H^*$ in the space of flags
$\symflags(\Sym^mV)$.
\begin{definition}
  For a peripheral subgroup $H \in \mc{H}$, we let $\dee \hat{S}_H$
  denote the set
  \[
    \dee \hat{S}_H = \{(v, w) \in \symflags(\Sym^m(V)) : v \in \dee
    S_H, w \in \dee S_H^*\}.
  \]
\end{definition}
The discussion above shows that $\dee \hat{S}_H$ is a nonempty closed
invariant subset of $\symflags(\Sym^mV)$, projecting to $\dee S_H$ and
$\dee S_H^*$ under the canonical maps
$\symflags(\Sym^mV) \to \P(\Sym^mV)$ and
$\symflags(\Sym^mV^*) \to \P(V^*)$.

\subsection{Defining the boundary set}

Using the sets $\dee \hat{S}_H$, we can define our candidate for the
EGF boundary set $\hat{\Lambda}_m \subset \symflags(\Sym^mV)$ as
follows. We let $\hat{\phi}:\hat{\Lambda}_\Omega(\Gamma) \to \bgamh$
denote the boundary extension for the EGF representation $\rho$ coming
from the proof of \Cref{thm:convex_cocompact_egf}. For each
$z \in \bgamh$, we define the set
$\hat{\psi}_m(z) \subset \symflags(\Sym^m(V))$ by:
\[
  \hat{\psi}_m(z) =
  \begin{cases}
    \hat{\iota}(\hat{\phi}^{-1}(z)), & z \in \conbdry(\Gamma, \mc{H})\\
    \dee\hat{S}_H, & z \in \parbdry(\Gamma, \mc{H}).
  \end{cases}
\]
We define
\[
  \hat{\Lambda}_m = \bigcup_{z \in \bgamh} \hat{\psi}_m(z),
\]
and observe that
$\hat{\iota}(\hat{\Lambda}_\Omega(\Gamma)) \subset \hat{\Lambda}_m$.

The set $\hat{\Lambda}_m$ is $\rho^m(\Gamma)$-invariant, since
$\hat{\iota}$ is $\tau_m$-equivariant and the construction of the set
$\dee \hat{S}_H$ is invariant. Ultimately we want to see that
$\hat{\Lambda}_m$ is compact, and that there is a well-defined
transverse map $\hat{\phi}_m:\hat{\Lambda}_m \to \bgamh$ giving us our
EGF boundary extension.

\subsection{Defining the boundary extension}

Our next immediate goal is to show:
\begin{prop}
  \label{prop:symmetric_square_map}
  For distinct $z_1, z_2 \in \bgamh$, the sets
  \[
    \hat{\psi}_m(z_1), \quad \hat{\psi}_m(z_2)
  \]
  are transverse (in particular, disjoint). Consequently, the map
  $\hat{\phi}_m:\hat{\Lambda}_m \to \bgamh$ given by
  \[
    \hat{\phi}_m(\xi) = z \iff \xi \in \hat{\psi}_m(z)
  \]
  is well-defined, equivariant, surjective, and transverse.
\end{prop}

\begin{lem}
  \label{lem:simplex_intersection_closed}
  Let $X$ be a closed subset of $\Lambda_\Omega(\Gamma)$, and let
  $\mc{H}_X \subset \mc{H}$ be the set
  $\{H \in \mc{H} : \dee \Delta_H \cap X \ne \emptyset\}$.

  Then the set
  \[
    X_{\mc{H}} = X \cup \bigcup_{H \in \mc{H}_X} \Delta_H
  \]
  is closed.
\end{lem}
\begin{proof}
  Let $x_n$ be a sequence in $X_{\mc{H}}$. By compactness of
  $\Lambda_\Omega(\Gamma)$, we can choose a subsequence so that
  $x_n \to x \in \Lambda_\Omega(\Gamma)$. We wish to show that
  $x \in X_{\mc{H}}$. Since $X$ is closed and $X \subset X_{\mc{H}}$,
  we may assume that for each $n$, we have $x_n \in \dee\Delta_{H_n}$
  for some $H_n \in \mc{H}_X$.

  Up to subsequence, the sets $\dee\Delta_{H_n}$ converge to a closed
  set $\dee\Delta_\infty$ which is a connected finite union of
  (possibly degenerate) projective simplices. We must have
  $x \in \dee \Delta_\infty \subset \Lambda_\Omega(\Gamma)$. By
  definition, $\dee \Delta_{H_n}$ intersects $X$ nontrivially, and
  since $X$ is closed we must also have
  $\dee \Delta_\infty \cap X \ne \emptyset$.

  Suppose for a contradiction that $x \notin X_{\mc{H}}$. Then in
  particular $x \notin X$. Since $\dee \Delta_\infty$ intersects $X$,
  it must contain at least two points, which means that every point in
  $\dee \Delta_\infty$ lies in a nontrivial closed projective segment
  (since $\dee \Delta_\infty$ is a connected finite union of
  projective simplices). But then by
  \Cref{cor:segments_in_peripherals},
  $\dee \Delta_\infty \subset \dee \Delta_H$ for some $H \in \mc{H}$,
  and since $\dee \Delta_\infty \cap X \ne \emptyset$ we have
  $H \in \mc{H}_X$ and therefore $x \in X_{\mc{H}}$, contradiction.
\end{proof}

\begin{prop}
  \label{prop:simplex_connected_comp}
  For each $H \in \mc{H}$, there is a connected subset $C_H$ in
  \[
    \Opp(\dee S_H^*) = \{x \in \P(\Sym^mV) : x \perp w \textrm{ for
      every } w \in \dee S_H^*\}
  \]
  such that for every closed subset
  $X \subset \Lambda_\Omega(\Gamma) - \Delta_H$, $C_H$ contains the
  closure of
  \[
    \iota(X) \cup \bigcup_{H' \in \mc{H}_X} S_{H'},
  \]
  where
  $\mc{H}_X = \{H \in \mc{H} : \dee \Delta_H \cap X \ne \emptyset\}$.
\end{prop}
\begin{proof}
  Let $\verts{H}$, $\verts{H}^*$ denote the vertex sets of $\Delta_H$
  and $\Delta_H^*$, respectively. Using the convexity of $\Omega$, we
  can find lifts $\tilde{w} \in V^*$ for each vertex
  $w \in \verts{H}^*$, a continuous lift $\tilde{\Lambda} \subset V$
  of $\Lambda_\Omega(\Gamma)$, and a continuous lift
  $\tilde{\Delta}_H \subset V$ of $\Delta_H$ so that
  \begin{equation}
    \label{eq:positivity_original_v}
    \tilde{w}(\tilde{\Lambda} - \tilde{\Delta}_H) > 0
  \end{equation}
  for every $w \in \verts{H}^*$.

  The lifts $\tilde{w}$ induce lifts $\tilde{w}^m \in (\Sym^mV)^*$ of
  each vertex $w^m$ of $S_H^*$. We take the set $C_H$ to be the
  projectivization of
  \[
    \{v \in \Sym^mV : \tilde{w}^m(v) > 0 \textrm{ for all vertices }
    w^m \textrm{ of } S_H^*\}.
  \]
  Every point in $\dee S_H^*$ is the projectivization of a convex
  combination of the lifts $\tilde{w}^m$. This tells us that $C_H$ is
  a connected subset of
  $\P(\Sym^mV) - \bigcup_{w \in \dee(S_H^*)} \hyp{w}$.

  Now let $X \subset \Lambda_\Omega(\Gamma) - \Delta_H$ be closed and
  let $Y$ be the set
  \[
    \iota(X) \cup \bigcup_{H' \in \mc{H}_X} S_{H'}.
  \]
  We wish to show that $\overline{Y} \subset C_H$. Let $X_{\mc{H}}$ be
  the set
  \[
    X_{\mc{H}} = X \cup \bigcup_{H' \in \mc{H}_X} \Delta_{H'}.
  \]
  By \Cref{lem:simplex_intersection_closed}, we can find a
  \emph{compact} lift $\tilde{X}_{\mc{H}}$ of $X_{\mc{H}}$ in $V$ so
  that $\tilde{w}(\tilde{x}) > 0$ for every
  $\tilde{x} \in \tilde{X}_{\mc{H}}$ and every $w \in \verts{H}^*$.
  We consider the set
  \[
    \Sym^m\tilde{X}_{\mc{H}} = \{\tilde{x}^m \in \Sym^mV : \tilde{x}^m
    = \prod_{i=1}^m \tilde{x}_i \textrm{ for } \tilde{x}_i \in
    \tilde{X}_{\mc{H}}\}.
  \]
  This set is the image of the $m$-fold Cartesian product
  $(\tilde{X}_{\mc{H}})^m$ under the continuous map $V^m \to \Sym^mV$
  given by $(v_1, \ldots, v_m) \mapsto v_1\cdots v_m$, so it is
  compact. Moreover, since $\Sym^m\tilde{X}_{\mc{H}}$ contains a lift
  of every vertex of every $S_{H'}$ for $H' \in \mc{H}_X$, the
  projectivization of the convex hull of $\Sym^m\tilde{X}_{\mc{H}}$
  contains $Y$, hence $\overline{Y}$.

  But from \eqref{eq:positivity_original_v}, we know that
  $\tilde{w}^m(\tilde{x}^m) > 0$ for every vertex $w^m$ of $S_H^*$ and
  every $\tilde{x}^m \in \tilde{X}_{\mc{H}}$, so we see that $C_H$
  contains the projectivization of the convex hull of
  $\Sym^m\tilde{X}_{\mc{H}}$.
\end{proof}

\begin{proof}[Proof of \Cref{prop:symmetric_square_map}]
  Let $z_1, z_2 \in \bgamh$ be distinct. If both $z_1$ and $z_2$ are
  conical limit points, the proposition follows from the
  transversality of the EGF boundary extension
  $\hat{\phi}:\hat{\Lambda}_\Omega(\Gamma) \to \bgamh$ and the fact
  that $\hat{\iota}$ preserves transversality. On the other hand, if
  $z_1$ is a parabolic point, this follows from
  \Cref{prop:simplex_connected_comp} (and the equivalent dual
  statement).
\end{proof}

\subsection{Dynamics on $S_H$}

We have now defined an equivariant transverse surjective map
$\hat{\phi}_m:\hat{\Lambda}_m \to \bgamh$, but we do not yet know that
the set $\hat{\Lambda}_m$ is compact, or even that $\hat{\phi}_m$ is
continuous. However, it turns out that it is easier to verify these
two facts after proving that $\hat{\phi}_m$ has certain dynamical
properties.

\begin{lem}
  \label{lem:convergence_to_symmetric_simplex}
  For each $H \in \mc{H}$, there exists an open set
  $\hat{C}_H \subset \symflags(\Sym^mV)$ containing
  $\hat{\Lambda}_m - \dee \hat{S}_H$, such that for any infinite
  sequence $h_n \in H$ and $\xi \in \hat{C}_H$, we have
  \[
    \rho^m(h_n)\xi \to \dee \hat{S}_H.
  \]
\end{lem}
\begin{proof}
  For each $H \in \mc{H}$, we let $C_H \subset \P(\Sym^mV)$ be the set
  coming from \Cref{prop:simplex_connected_comp}. Let $h_n$ be a
  divergent sequence in some $H \in \mc{H}$, and let $H_0$ be a
  finite-index free abelian
  subgroup. \Cref{cor:symmetric_simplex_attracting_subspace} says that
  some face $F$ of $S_H$ spans an attracting subspace for
  $\rho^m(h_n)$. The corresponding repelling subspace is a direct sum
  of weight spaces for the restriction $\rho^m|_{H_0}$, so it is
  contained in $\hyp{w^m}$ for a vertex $w^m$ of the dual simplex
  $S_H^*$. So, for any $x \in C_H$, any subsequence of $\rho^m(h_n)x$
  subconverges to a point in $[\spn(F)]$. In fact, $\rho^m(h_n)x$
  subconverges to a point in $\overline{F} \subset \dee S_H$, since
  $C_H$ is $\rho^m(H)$-invariant and
  $C_H \cap \supp(F) = \overline{F}$.

  Then, we can dually define a set $C_H^* \subset \P(\Sym^mV^*)$, and
  take
  \[
    \hat{C}_H = \{(x,w) \in \symflags(\Sym^mV) : x \in C_H, w \in C_H^*\}.
  \]
\end{proof}

\subsection{Continuity and compactness}

\begin{lem}
  The set $\hat{\Lambda}_m$ is closed.
\end{lem}
\begin{proof}
  Let $(x_n, w_n)$ be a sequence in $\hat{\Lambda}_m$, and let
  $z_n = \hat{\phi}_m(x_n, w_n)$. Up to subsequence, $z_n$ converges
  to $z \in \bgamh$.

  If $z$ is a conical limit point, let $\gamma_n$ be a sequence
  limiting conically to $z$, chosen so that for any $z' \ne z$, we
  have $\gamma_n^{-1}z' \to b$ and $\lim \gamma_n^{-1}z_n = a \ne b$.

  Then $\hat{\phi}^{-1}(\gamma_n^{-1}z_n)$ converges to
  $\hat{\phi}^{-1}(a)$, and thus $\hat{\phi}^{-1}(\gamma_n^{-1}z_n)$
  lies in a fixed compact subset $X$ of
  $\Opp(\hat{\phi}^{-1}(b)) \cap \hat{\Lambda}_\Omega(\Gamma)$. By
  definition, this means that for every $n$,
  $\hat{\phi}_m^{-1}(\gamma_n^{-1}z_n)$ lies in the set
  \[
    \iota(X) \cup \bigcup_{H' \in \mc{H}_X}S_{H'},
  \]
  Arguing as in \Cref{prop:simplex_connected_comp}, we see that this
  set is compact. So by antipodality of $\hat{\phi}_m$, the sets of
  flags $\hat{\phi}_m^{-1}(\gamma_n^{-1}z_n)$ lie in a fixed compact
  subset of
  $\Opp(\hat{\phi}_m^{-1}(b)) = \Opp(\hat{\iota}(\hat{\phi}^{-1}(b)))$
  and by \Cref{prop:symmetric_power_dynamics},
  \[
    (x_n, w_n) \in \rho^m(\gamma_n)\hat{\phi}_m^{-1}(\gamma_n^{-1}z_n)
  \]
  converges to $\hat{\iota}(\hat{\phi}^{-1}(z))$.
  
  If $z$ is a parabolic point, we let $H = \Stab_\Gamma(z)$, and
  choose $h_n \in H$ so that $h_n^{-1}z_n \in K$ for a fixed compact
  $K - \{z\}$. By \Cref{prop:simplex_connected_comp}, we know that for
  all $n$, $\hat{\phi}_m^{-1}(h_n^{-1}z_n)$ lies in a fixed compact
  subset of $\hat{C}_H$. Then,
  \Cref{lem:convergence_to_symmetric_simplex} implies that
  \[
    (x_n, w_n) \in \rho^m(h_n)\hat{\phi}_m^{-1}(h_n^{-1}z_n)
  \]
  subconverges to a point in $\dee \hat{S}_H$.
\end{proof}

\begin{prop}
  The map $\hat{\phi}_m$ is continuous.
\end{prop}
\begin{proof}
  Let $(x_n, w_n)$ be a sequence in $\hat{\Lambda}_m$, converging to
  $(x, w)$ (which we know lies in $\hat{\Lambda}_m$ by the previous
  proposition). Let $z_n = \hat{\phi}_m(x_n, w_n)$, and suppose for a
  contradiction that up to subsequence $z_n \to z$ for
  $z \ne \hat{\phi}_m(x, w)$.

  \Cref{prop:simplex_connected_comp} then implies that $z_n$ lies in a
  compact subset $K \subset \bgamh$ so that the closure of
  $\hat{\phi}_m^{-1}(K)$ is opposite to $(x,w)$. This contradicts the
  fact that $(x_n, w_n)$ converges to $(x,w)$.
\end{proof}

At this point, we have shown that
$\hat{\phi}_m:\hat{\Lambda}_m \to \bgamh$ is a continuous equivariant
surjective transverse map, and that $\hat{\Lambda}_m$ is a compact
subset of $\symflags(\Sym^mV)$. So, we can finish the proof of
\Cref{thm:symmetric_powers_egf} by showing:
\begin{prop}
  The map $\hat{\phi}_m:\hat{\Lambda}_m \to \bgamh$ extends the
  convergence action of $\Gamma$ on $\bgamh$.
\end{prop}
\begin{proof}
  We apply \Cref{prop:conical_peripheral_implies_egf}. If $\gamma_n$
  is a sequence limiting conically to $z$, then the results of
  \Cref{sec:convex_cocompact} imply that the sequences
  $\rho(\gamma_n^\pm)$ have unique limit points in $\symflags(V)$, all
  lying in $\hat{\Lambda}_\Omega(\Gamma)$. Since
  $\hat{\iota}(\hat{\Lambda}_\Omega(\Gamma))$ is a subset of
  $\hat{\Lambda}_m$, \Cref{prop:symmetric_power_dynamics} ensures that
  the first condition of \Cref{prop:conical_peripheral_implies_egf}
  holds.

  On the other hand, for each parabolic point $p$, we take $\hat{C}_p$
  to be the open set $\hat{C}_H$ considered in
  \Cref{lem:convergence_to_symmetric_simplex}, for
  $H = \Stab_\Gamma(p)$. \Cref{lem:convergence_to_symmetric_simplex}
  implies that $\hat{C}_p$ contains
  $\hat{\Lambda}_m - \hat{\phi}_m^{-1}(p)$ and that for any
  $(x, w) \in \hat{C}_p$ and any infinite sequence $h_n \in H$,
  $\rho^m(h_n)\xi$ subconverges to a point in $\dee \hat{S}_H$.
\end{proof}

\subsection{Stability}

We have now shown that the representations $\rho^m$ are all extended
geometrically finite. Our last goal for the section is the following
(which implies \Cref{thm:symmetric_powers_stable}):
\begin{prop}
  \label{prop:sym_power_peripheral_stab}
  The space $\Hom(\Gamma, \SL(\Sym^mV))$ is peripherally stable with
  respect to $(\rho^m, \hat{\phi}_m)$.
\end{prop}

The main step in the proof is the following:
\begin{lem}
  \label{lem:symmetric_power_perturb}
  Let $H_0$ be a finite-index free abelian subgroup of some
  $H \in \mc{H}$, with $H = \Stab_\Gamma(p)$. For any open set
  $U \subset \P(\Sym^mV)$ containing $S_H$ and any compact
  $K \subset C_p$, there exists a cofinite subset $T \subset H_0$ and
  an open set $\mc{W} \subset \Hom(\Gamma, \SL(\Sym^mV))$ containing
  $\rho^m$ such that for any $\sigma \in \mc{W}$, we have
  $\sigma(h)K \subset U$ for any $h \in T$.
\end{lem}
\begin{proof}
  We fix $U$ and $K$ as in the proposition, and proceed by
  contradiction. So, suppose that there exists a sequence of distinct
  group elements $h_n \in H_0$, a sequence of representations
  $\sigma_n:\Gamma \to \SL(\Sym^mV)$, and a sequence of points
  $x_n \in K$ such that $\sigma_n \to \rho^m$ and
  $\sigma_n(h_n)x_n \notin U$. Up to subsequence we can assume that
  $x_n$ converges to some $x \in K \subset C_p$. We let $\Phi^m$
  denote the set of generalized weights of $\rho^m|_{H_0}$, and we let
  $\Phi^m_n$ denote the generalized weights of $\sigma_n|_{H_0}$.

  We choose a norm $|\cdot|$ on $H_0 \ten_\Z \R$. Then up to
  subsequence $h_n / |h_n|$ converges to $h_\infty \in H_0 \ten_\Z \R$
  with $|h_\infty| = 1$.

  Since $\Phi^m$ is finite, there is a face $\tilde{F}$ of the
  $k$-simplex $\mc{C}(\rho^m|_{H_0})$ and a constant $M > 0$, such
  that for every weight $\mu \in \Phi^m(F) = \Phi^m \cap \tilde{F}$,
  and every weight $\mu^\opp \in \Phi^m - \Phi(F)$, we have
  \[
    \mu(h_\infty) - \mu^\opp(h_\infty) > M.
  \]
  We let $V^m_F \subset \Sym^mV$ denote the span of the weight spaces
  of the weights in $\Phi^m(F)$; by definition $\P(V^m_F)$ is the
  projective span of a face of $S_H$.
  
  \Cref{prop:weights_deform_continuously} implies that as a set with
  multiplicity, the weights $\Phi_n^m$ converge to the weights
  $\Phi^m$. So, for each $n$, there is a subset
  $\theta_n \subset \Phi_n^m$ such that $\theta_n$ converges to
  $\Phi^m \cap \dee \mc{C}(\rho^m|_{H_0})$, and a subset
  $\theta_n(F) \subset \theta_n$ such that $\theta_n(F)$ converges to
  $\Phi^m(F)$. \Cref{prop:weights_deform_continuously} also implies
  that for sufficiently large $n$, all of the weights in $\theta_n$
  must have one-dimensional generalized weight spaces, converging to
  the vertices of $S_H$.

  This means that for each $n$, there are simplices $S_H^n$ and
  $(S_H^n)^*$, invariant under the action of $\sigma_n(H)$, such that
  $S_H^n \to S_H$ and $(S_H^n)^* \to S_H^*$. So, we can find a
  sequence of group elements $g_n \in \SL(V)$, with $g_n$ converging
  to the identity, so that $\sigma_n' = g_n \sigma_n g_n^{-1}$
  preserves the simplices $S_H$ and $S_H^*$. Moreover, the vertices of
  $S_H$ are the weight spaces $V_\mu$ of $\sigma_n'$ for
  $\mu \in \theta_n$, and the space $V^m_F$ is spanned by weight
  spaces $V_{\mu(F)}$ of $\sigma_n'$ for $\mu(F) \in \theta_n(F)$. We
  can also assume that $\sigma_n'$ preserves the complementary weight
  space $(V_F^m)^\opp$ for $V_F$.

  Now, since $\theta_n(F)$ converges to $\Phi^m(F)$, for sufficiently
  large $n$ we must have
  \[
    \mu_n(h_\infty) - \mu_n^\opp(h_\infty) > M
  \]
  for every $\mu_n \in \theta_n(F)$ and every
  $\mu_n^\opp \in \Phi^m_n - \theta_n(F)$. This also means that for
  sufficiently large $n$ we have
  \[
    \mu_n(h_n) - \mu_n^\opp(h_n) > M|h_n|.
  \]
  Then, letting $r^+(g)$ and $r^-(g)$ respectively denote the largest
  and smallest modulus of any eigenvalue of $g$, we see that for
  sufficiently large $n$,
  \[
    \frac{r^-(\sigma_n'(h_n)|_{V_F^m})}{r^+(\sigma_n'(h_n)|_{(V_F^m)^\opp})}
    > \exp(M |h_n|).
  \]
  We may choose an inner product on $\Sym^mV$ so that $V_F^m$ and
  $(V_F^m)^\opp$ are orthogonal. And, up to change-of-basis lying in
  compact subset of $\SL(V_F^m \oplus (V_F^m)^\opp)$, the restriction
  of (the complexifications of) $\sigma_n'(H_0)$ to $V_F^m$ and
  $(V_F^m)^\opp$ are both upper-triangular. Then we can apply
  \Cref{lem:upper_triangular_norm_bound} to see that the ratio
  \[
    \frac{\mathbf{m}(\sigma_n'(h_n)|_{V_F^m})}{||\sigma_n'(h_n)|_{(V_F^m)^\opp||}}
  \]
  tends to infinity as $n \to \infty$. Then by
  \Cref{lem:norm_attract_estimate}, for sufficiently large $n$,
  $\sigma_n'(h_n)x_n$ lies in a small neighborhood of
  $\P(V_F)$. Moreover, we know that the set $C_p$ is
  $\sigma_n'(H)$-invariant, since the simplex $S_H^*$ is
  $\sigma_n'(H)$-invariant. Since $x$ lies in $C_p$, $\sigma_n'(h_n)x$
  lies in an arbitrarily small neighborhood of $\P(V_F) \cap C_p$. By
  definition this intersection is a face of $S_H$, so for large enough
  $n$, $\sigma_n(h_n)x$ must lie in an arbitrarily small neighborhood
  of this face, giving a contradiction.
\end{proof}

\begin{proof}[Proof of \Cref{prop:sym_power_peripheral_stab}]
  We want to show that if $H = \Stab_\Gamma(p)$ for a parabolic point
  $p$, $K$ is a compact subset of $\hat{C}_p$, $U$ is a
  neighborhood of $\hat{S}_H$, and $T \subset H$ is a cofinite
  subset such that
  \begin{equation}
    \label{eq:cofinite_subset_in_nbhd}
    \rho^m (T) \cdot K \subset U,
  \end{equation}
  then we can find a neighborhood $\mc{W}$ of $\rho^m|_H$ in
  $\Hom(H, \SL(\Sym^mV))$ so that for any $\sigma \in \mc{W}$,
  \begin{equation}
    \label{eq:cofinite_subset_deformation}
    \sigma(T) \cdot K \subset U.
  \end{equation}

  For simplicity, we will not work in the space of flags
  $\symflags(\Sym^mV)$. Instead we will just show that that if
  (\ref{eq:cofinite_subset_in_nbhd}) holds for a compact
  $K \subset C_p \subset \P(\Sym^mV)$ and an open neighborhood $U$ of
  $S_H$ in $\P(\Sym^mV)$, then (\ref{eq:cofinite_subset_deformation})
  holds also.

  Fix a finite-index free abelian subgroup $H_0 \subseteq H$. It
  suffices to show that we can choose an open
  $\mc{W} \subset \Hom(\Gamma, \SL(\Sym^mV))$ so that
  \[
    \sigma(T \cap H_0) \cdot K \subset U
  \]
  for all $\sigma \in \mc{W}$. It follows immediately from
  \Cref{lem:symmetric_power_perturb} that we can find a cofinite set
  $T' \subset H_0$ and an open set
  $\mc{W} \subset \Hom(\Gamma, \SL(\Sym^mV))$ so that for all
  $\sigma \in \mc{W}'$, we have
  \[
    \sigma(T') \cdot K \subset U.
  \]
  But then since $(T \cap H_0) - T'$ is finite, we can just shrink
  $\mc{W}$ to get the desired result.
\end{proof}


\printbibliography

@article {bcl2020generalized,
    AUTHOR = {Ballas, Samuel A. and Cooper, Daryl and Leitner, Arielle},
     TITLE = {Generalized cusps in real projective manifolds:
              classification},
   JOURNAL = {J. Topol.},
  FJOURNAL = {Journal of Topology},
    VOLUME = {13},
      YEAR = {2020},
    NUMBER = {4},
     PAGES = {1455--1496},
      ISSN = {1753-8416},
   MRCLASS = {57N16 (57M50)},
  MRNUMBER = {4125754},
MRREVIEWER = {Goo Ishikawa},
       DOI = {10.1112/topo.12161},
       URL = {https://doi.org/10.1112/topo.12161},
}

@article{bdl2015convex,
author = {Ballas, Samuel and Danciger, Jeffrey and Lee, Gye-Seon},
year = {2015},
month = {08},
pages = {},
title = {Convex projective structures on non-hyperbolic three-manifolds},
volume = {22},
journal = {Geometry \& Topology},
doi = {10.2140/gt.2018.22.1593}
}

@article {benoist2006convexes,
    AUTHOR = {Benoist, Yves},
     TITLE = {Convexes divisibles. {IV}. {S}tructure du bord en dimension 3},
   JOURNAL = {Invent. Math.},
  FJOURNAL = {Inventiones Mathematicae},
    VOLUME = {164},
      YEAR = {2006},
    NUMBER = {2},
     PAGES = {249--278},
      ISSN = {0020-9910},
   MRCLASS = {22E40 (37D40 57N10 57S30)},
  MRNUMBER = {2218481},
MRREVIEWER = {Herbert Abels},
       DOI = {10.1007/s00222-005-0478-4},
       URL = {https://doi.org/10.1007/s00222-005-0478-4},
}

@article {bobb2019convex,
    AUTHOR = {Bobb, Martin D.},
     TITLE = {Convex projective manifolds with a cusp of any
              non-diagonalizable type},
   JOURNAL = {J. Lond. Math. Soc. (2)},
  FJOURNAL = {Journal of the London Mathematical Society. Second Series},
    VOLUME = {100},
      YEAR = {2019},
    NUMBER = {1},
     PAGES = {183--202},
      ISSN = {0024-6107},
   MRCLASS = {57N16 (20H10 57M50)},
  MRNUMBER = {3999687},
MRREVIEWER = {Yasushi Yamashita},
       DOI = {10.1112/jlms.12208},
       URL = {https://doi.org/10.1112/jlms.12208},
}

@article {clm2020convex,
    AUTHOR = {Choi, Suhyoung and Lee, Gye-Seon and Marquis, Ludovic},
     TITLE = {Convex projective generalized {D}ehn filling},
   JOURNAL = {Ann. Sci. \'{E}c. Norm. Sup\'{e}r. (4)},
  FJOURNAL = {Annales Scientifiques de l'\'{E}cole Normale Sup\'{e}rieure. Quatri\`eme
              S\'{e}rie},
    VOLUME = {53},
      YEAR = {2020},
    NUMBER = {1},
     PAGES = {217--266},
      ISSN = {0012-9593},
   MRCLASS = {57R18},
  MRNUMBER = {4093439},
       DOI = {10.24033/asens.2421},
       URL = {https://doi.org/10.24033/asens.2421},
}

@article {clt2015convex,
    AUTHOR = {Cooper, D. and Long, D. D. and Tillmann, S.},
     TITLE = {On convex projective manifolds and cusps},
   JOURNAL = {Adv. Math.},
  FJOURNAL = {Advances in Mathematics},
    VOLUME = {277},
      YEAR = {2015},
     PAGES = {181--251},
      ISSN = {0001-8708},
   MRCLASS = {57N16 (57M50)},
  MRNUMBER = {3336086},
MRREVIEWER = {Thilo Kuessner},
       DOI = {10.1016/j.aim.2015.02.009},
       URL = {https://doi.org/10.1016/j.aim.2015.02.009},
}

@article {clt2018deforming,
    AUTHOR = {Cooper, Daryl and Long, Darren and Tillmann, Stephan},
     TITLE = {Deforming convex projective manifolds},
   JOURNAL = {Geom. Topol.},
  FJOURNAL = {Geometry \& Topology},
    VOLUME = {22},
      YEAR = {2018},
    NUMBER = {3},
     PAGES = {1349--1404},
      ISSN = {1465-3060},
   MRCLASS = {57N16 (57M50)},
  MRNUMBER = {3780436},
MRREVIEWER = {Gye-Seon Lee},
       DOI = {10.2140/gt.2018.22.1349},
       URL = {https://doi.org/10.2140/gt.2018.22.1349},
}

@article {cm2014finitude,
    AUTHOR = {Crampon, Micka\"{e}l and Marquis, Ludovic},
     TITLE = {Finitude g\'{e}om\'{e}trique en g\'{e}om\'{e}trie de {H}ilbert},
   JOURNAL = {Ann. Inst. Fourier (Grenoble)},
  FJOURNAL = {Universit\'{e} de Grenoble. Annales de l'Institut Fourier},
    VOLUME = {64},
      YEAR = {2014},
    NUMBER = {6},
     PAGES = {2299--2377},
      ISSN = {0373-0956},
   MRCLASS = {51F99 (20F65 53C60)},
  MRNUMBER = {3331168},
MRREVIEWER = {Thomas Barthelm\'{e}},
       DOI = {10.5802/aif.2914},
       URL = {https://doi.org/10.5802/aif.2914},
}

@ARTICLE{czz2021cusped,
       author = {{Canary}, Richard and {Zhang}, Tengren and {Zimmer}, Andrew},
        title = "{Cusped Hitchin representations and Anosov representations of geometrically finite Fuchsian groups}",
      journal = {arXiv e-prints},
     keywords = {Mathematics - Differential Geometry, Mathematics - Dynamical Systems, Mathematics - Group Theory, 22E40, 53C23, 53C35, 37D40, 37D20},
         year = 2021,
        month = mar,
          eid = {arXiv:2103.06588},
        pages = {arXiv:2103.06588},
archivePrefix = {arXiv},
       eprint = {2103.06588},
 primaryClass = {math.DG},
       adsurl = {https://ui.adsabs.harvard.edu/abs/2021arXiv210306588C},
      adsnote = {Provided by the SAO/NASA Astrophysics Data System}
}

@ARTICLE{dgklm2021convex,
       author = {{Danciger}, Jeffrey and {Gu{\'e}ritaud}, Fran{\c{c}}ois and {Kassel}, Fanny and {Lee}, Gye-Seon and {Marquis}, Ludovic},
        title = "{Convex cocompactness for Coxeter groups}",
      journal = {arXiv e-prints},
     keywords = {Mathematics - Group Theory, Mathematics - Geometric Topology},
         year = 2021,
        month = feb,
          eid = {arXiv:2102.02757},
        pages = {arXiv:2102.02757},
archivePrefix = {arXiv},
       eprint = {2102.02757},
 primaryClass = {math.GR},
       adsurl = {https://ui.adsabs.harvard.edu/abs/2021arXiv210202757D},
      adsnote = {Provided by the SAO/NASA Astrophysics Data System}
}

@ARTICLE{dgk2017convex,
       author = {{Danciger}, Jeffrey and {Gu{\'e}ritaud}, Fran{\c{c}}ois and
         {Kassel}, Fanny},
        title = "{Convex cocompact actions in real projective geometry}",
      journal = {arXiv e-prints},
     keywords = {Mathematics - Geometric Topology, Mathematics - Group Theory},
         year = 2017,
        month = apr,
          eid = {arXiv:1704.08711},
        pages = {arXiv:1704.08711},
archivePrefix = {arXiv},
       eprint = {1704.08711},
 primaryClass = {math.GT},
       adsurl = {https://ui.adsabs.harvard.edu/abs/2017arXiv170408711D},
      adsnote = {Provided by the SAO/NASA Astrophysics Data System}
}

@article {ggkw2017anosov,
    AUTHOR = {Gu\'{e}ritaud, Fran\c{c}ois and Guichard, Olivier and Kassel, Fanny
              and Wienhard, Anna},
     TITLE = {Anosov representations and proper actions},
   JOURNAL = {Geom. Topol.},
  FJOURNAL = {Geometry \& Topology},
    VOLUME = {21},
      YEAR = {2017},
    NUMBER = {1},
     PAGES = {485--584},
      ISSN = {1465-3060},
   MRCLASS = {37D40 (20F67 22E40 57S30)},
  MRNUMBER = {3608719},
       DOI = {10.2140/gt.2017.21.485},
       URL = {https://doi.org/10.2140/gt.2017.21.485},
}

@article {gw2012anosov,
    AUTHOR = {Guichard, Olivier and Wienhard, Anna},
     TITLE = {Anosov representations: domains of discontinuity and
              applications},
   JOURNAL = {Invent. Math.},
  FJOURNAL = {Inventiones Mathematicae},
    VOLUME = {190},
      YEAR = {2012},
    NUMBER = {2},
     PAGES = {357--438},
      ISSN = {0020-9910},
   MRCLASS = {22F30 (32G15 53C30 53D25)},
  MRNUMBER = {2981818},
MRREVIEWER = {Pablo Su\'{a}rez-Serrato},
       DOI = {10.1007/s00222-012-0382-7},
       URL = {https://doi.org/10.1007/s00222-012-0382-7},
}

@ARTICLE{iz2019flat,
       author = {{Islam}, Mitul and {Zimmer}, Andrew},
        title = "{A flat torus theorem for convex co-compact actions of projective linear groups}",
      journal = {arXiv e-prints},
     keywords = {Mathematics - Geometric Topology, Mathematics - Differential Geometry, 53A20, 57N16, 58B20, 20F67, 20H10},
         year = 2019,
        month = jul,
          eid = {arXiv:1907.03277},
        pages = {arXiv:1907.03277},
archivePrefix = {arXiv},
       eprint = {1907.03277},
 primaryClass = {math.GT},
       adsurl = {https://ui.adsabs.harvard.edu/abs/2019arXiv190703277I},
      adsnote = {Provided by the SAO/NASA Astrophysics Data System}
}

@ARTICLE{iz2019convex,
       author = {{Islam}, Mitul and {Zimmer}, Andrew},
        title = "{Convex co-compact actions of relatively hyperbolic groups}",
      journal = {arXiv e-prints},
     keywords = {Mathematics - Geometric Topology, Mathematics - Differential Geometry, 53A20, 20F67, 57N16, 58B20, 20H10},
         year = 2019,
        month = oct,
          eid = {arXiv:1910.08885},
        pages = {arXiv:1910.08885},
archivePrefix = {arXiv},
       eprint = {1910.08885},
 primaryClass = {math.GT},
       adsurl = {https://ui.adsabs.harvard.edu/abs/2019arXiv191008885I},
      adsnote = {Provided by the SAO/NASA Astrophysics Data System}
}

@ARTICLE{iz2022structure,
       author = {{Islam}, Mitul and {Zimmer}, Andrew},
        title = "{The structure of relatively hyperbolic groups in convex real projective geometry}",
      journal = {arXiv e-prints},
     keywords = {Mathematics - Geometric Topology, Mathematics - Differential Geometry},
         year = 2022,
        month = mar,
          eid = {arXiv:2203.16596},
        pages = {arXiv:2203.16596},
archivePrefix = {arXiv},
       eprint = {2203.16596},
 primaryClass = {math.GT},
       adsurl = {https://ui.adsabs.harvard.edu/abs/2022arXiv220316596I},
      adsnote = {Provided by the SAO/NASA Astrophysics Data System}
}

@ARTICLE{kl2018relativizing,
       author = {{Kapovich}, Michael and {Leeb}, Bernhard},
        title = "{Relativizing characterizations of Anosov subgroups, I}",
      journal = {arXiv e-prints},
     keywords = {Mathematics - Group Theory, Mathematics - Differential Geometry, Mathematics - Geometric Topology, 22E40, 20F65, 20F67, 53C35},
         year = 2018,
        month = jun,
          eid = {arXiv:1807.00160},
        pages = {arXiv:1807.00160},
archivePrefix = {arXiv},
       eprint = {1807.00160},
 primaryClass = {math.GR},
       adsurl = {https://ui.adsabs.harvard.edu/abs/2018arXiv180700160K},
      adsnote = {Provided by the SAO/NASA Astrophysics Data System}
}

@article {klp2017anosov,
    AUTHOR = {Kapovich, Michael and Leeb, Bernhard and Porti, Joan},
     TITLE = {Anosov subgroups: dynamical and geometric characterizations},
   JOURNAL = {Eur. J. Math.},
  FJOURNAL = {European Journal of Mathematics},
    VOLUME = {3},
      YEAR = {2017},
    NUMBER = {4},
     PAGES = {808--898},
      ISSN = {2199-675X},
   MRCLASS = {22E40 (20F65 53C35)},
  MRNUMBER = {3736790},
MRREVIEWER = {Herbert Abels},
       DOI = {10.1007/s40879-017-0192-y},
       URL = {https://doi.org/10.1007/s40879-017-0192-y},
}

@article {labourie2006anosov,
    AUTHOR = {Labourie, Fran\c{c}ois},
     TITLE = {Anosov flows, surface groups and curves in projective space},
   JOURNAL = {Invent. Math.},
  FJOURNAL = {Inventiones Mathematicae},
    VOLUME = {165},
      YEAR = {2006},
    NUMBER = {1},
     PAGES = {51--114},
      ISSN = {0020-9910},
   MRCLASS = {20F65 (37D20 37F30)},
  MRNUMBER = {2221137},
MRREVIEWER = {Richard Kenyon},
       DOI = {10.1007/s00222-005-0487-3},
       URL = {https://doi.org/10.1007/s00222-005-0487-3},
}

@ARTICLE{weisman2020dynamical,
       author = {{Weisman}, Theodore},
        title = "{Dynamical properties of convex cocompact actions in projective space}",
      journal = {arXiv e-prints},
     keywords = {Mathematics - Geometric Topology},
         year = 2020,
        month = sep,
          eid = {arXiv:2009.10994},
        pages = {arXiv:2009.10994},
archivePrefix = {arXiv},
       eprint = {2009.10994},
 primaryClass = {math.GT},
       adsurl = {https://ui.adsabs.harvard.edu/abs/2020arXiv200910994W},
      adsnote = {Provided by the SAO/NASA Astrophysics Data System}
}

@article {zhu2019relatively,
    AUTHOR = {Zhu, Feng},
     TITLE = {Relatively dominated representations},
   JOURNAL = {Ann. Inst. Fourier (Grenoble)},
  FJOURNAL = {Universit\'{e} de Grenoble. Annales de l'Institut Fourier},
    VOLUME = {71},
      YEAR = {2021},
    NUMBER = {5},
     PAGES = {2169--2235},
      ISSN = {0373-0956},
   MRCLASS = {22E40 (20F67 37D30 53C35)},
  MRNUMBER = {4398259},
       DOI = {10.5802/aif.3449},
       URL = {https://doi.org/10.5802/aif.3449},
}

@article {zimmer2021projective,
    AUTHOR = {Zimmer, Andrew},
     TITLE = {Projective {A}nosov representations, convex cocompact actions,
              and rigidity},
   JOURNAL = {J. Differential Geom.},
  FJOURNAL = {Journal of Differential Geometry},
    VOLUME = {119},
      YEAR = {2021},
    NUMBER = {3},
     PAGES = {513--586},
      ISSN = {0022-040X},
   MRCLASS = {22E40 (20F65 20F67 20H10 37D40 53C24 57S30)},
  MRNUMBER = {4333029},
       DOI = {10.4310/jdg/1635368438},
       URL = {https://doi.org/10.4310/jdg/1635368438},
}

@ARTICLE{BV,
       author = {{Blayac}, Pierre-Louis and {Viaggi}, Gabriele},
        title = "{Divisible convex sets with properly embedded cones}",
      journal = {arXiv e-prints},
     keywords = {Mathematics - Geometric Topology, Mathematics - Group Theory, 22E40, 57N16, 57K35},
         year = 2023,
        month = feb,
          eid = {arXiv:2302.07177},
        pages = {arXiv:2302.07177},
          doi = {10.48550/arXiv.2302.07177},
archivePrefix = {arXiv},
       eprint = {2302.07177},
 primaryClass = {math.GT},
       adsurl = {https://ui.adsabs.harvard.edu/abs/2023arXiv230207177B},
      adsnote = {Provided by the SAO/NASA Astrophysics Data System}
}

@article {bm2020properly,
    AUTHOR = {Ballas, Samuel and Marquis, Ludovic},
     TITLE = {Properly convex bending of hyperbolic manifolds},
   JOURNAL = {Groups Geom. Dyn.},
  FJOURNAL = {Groups, Geometry, and Dynamics},
    VOLUME = {14},
      YEAR = {2020},
    NUMBER = {2},
     PAGES = {653--688},
      ISSN = {1661-7207,1661-7215},
   MRCLASS = {57M50},
  MRNUMBER = {4118632},
MRREVIEWER = {Joan\ Porti},
       DOI = {10.4171/GGD/558},
       URL = {https://doi.org/10.4171/GGD/558},
}

@article {ballas2021,
    AUTHOR = {Ballas, Samuel A.},
     TITLE = {Constructing convex projective 3-manifolds with generalized
              cusps},
   JOURNAL = {J. Lond. Math. Soc. (2)},
  FJOURNAL = {Journal of the London Mathematical Society. Second Series},
    VOLUME = {103},
      YEAR = {2021},
    NUMBER = {4},
     PAGES = {1276--1313},
      ISSN = {0024-6107,1469-7750},
   MRCLASS = {57K32 (57K35)},
  MRNUMBER = {4273469},
MRREVIEWER = {Arielle\ Leitner},
       DOI = {10.1112/jlms.12407},
       URL = {https://doi.org/10.1112/jlms.12407},
}

@ARTICLE{zz1,
       author = {{Zhu}, Feng and {Zimmer}, Andrew},
        title = "{Relatively Anosov representations via flows I: theory}",
      journal = {arXiv e-prints},
     keywords = {Mathematics - Geometric Topology, Mathematics - Differential Geometry, Mathematics - Dynamical Systems},
         year = 2022,
        month = jul,
          eid = {arXiv:2207.14737},
        pages = {arXiv:2207.14737},
          doi = {10.48550/arXiv.2207.14737},
archivePrefix = {arXiv},
       eprint = {2207.14737},
 primaryClass = {math.GT},
       adsurl = {https://ui.adsabs.harvard.edu/abs/2022arXiv220714737Z},
      adsnote = {Provided by the SAO/NASA Astrophysics Data System}
}

@ARTICLE{Weisman2022,
       author = {{Weisman}, Theodore},
        title = "{An extended definition of Anosov representation for relatively hyperbolic groups}",
      journal = {arXiv e-prints},
     keywords = {Mathematics - Group Theory, Mathematics - Geometric Topology},
         year = 2022,
        month = may,
          eid = {arXiv:2205.07183},
        pages = {arXiv:2205.07183},
          doi = {10.48550/arXiv.2205.07183},
archivePrefix = {arXiv},
       eprint = {2205.07183},
 primaryClass = {math.GR},
       adsurl = {https://ui.adsabs.harvard.edu/abs/2022arXiv220507183W},
      adsnote = {Provided by the SAO/NASA Astrophysics Data System}
}

@article{DGKexamples,
  title={Examples and non-examples of convex cocompact groups in projective space},
  author={{Danciger}, Jeffrey and {Gu{\'e}ritaud}, Fran{\c{c}}ois and
         {Kassel}, Fanny},
  note="{In preparation}"
}

\end{document}